\renewcommand{\phi}{\varphi}
\renewcommand{\tau}{\uptau}
\theoremstyle{plain}
\newtheorem{theorem}{Theorem}[subsection]
\newtheorem{cor}[theorem]{Corollary}
\newtheorem{lemma}[theorem]{Lemma}
\theoremstyle{definition}
\newtheorem{definition}[theorem]{Definition}
\theoremstyle{remark}
\date{\today}
\newcommand{\Grun}{Gr\"un~}
\begin{document}
\thispagestyle{empty}
\begin{center}
\begin{spacing}{1.8} 
{\Huge On the characterization of the numbers 
$n$ such that any group of order $n$ has a given property $P$}\\
\end{spacing}
\vspace*{0.7cm}
{\huge Logan Crew}\\
\vspace*{1cm}
{\huge Advisor: Professor\ Thomas Haines}\\
\vfill
\textsc{\LARGE Honors Thesis in Mathematics}\\
\vspace*{0.3cm}
\textsc{\LARGE University of Maryland, College Park}\\
\end{center}
\pagebreak
\thispagestyle{empty}
\tableofcontents
\pagebreak
\section{Introduction}

\markboth{Logan Crew}
{On the characterization of $P$-groups}
One of the classical problems in group theory is 
determining the set of positive integers $n$ such that 
every group of order $n$ has a particular property $P$, such as cyclic or abelian.  
We first present the Sylow theorems and the idea of solvable groups, 
both of which will be invaluable in our analysis. 
We then gather various solutions to this problem for 
cyclic, abelian, nilpotent, and supersolvable groups, 
as well as groups with ordered Sylow towers.  
There is also quite a bit of research on this problem for solvable
groups, but this research is outside the scope of this paper.

This work is an exposition of known results, but it is hoped that the reader will find 
useful the presentation in a single account of the various tools 
that have been used to solve this general problem.  This article claims no
originality, but is meant as a synthesis of related knowledge and resources.    

To simplify terminology, if a positive integer $n$ satisfies that every group of order $n$ has property $P$, we will call
$n$ a \emph{P number}.  For example, if the only group of order $n$ is the cyclic group, we will call $n$ a \emph{cyclic number}, 
and similarly for \emph{abelian number}, \emph{nilpotent number}, \emph{ordered Sylow number}, and
\emph{supersolvable number}.

Some notation:
\begin{itemize}
\item $C_k$ denotes the cyclic group of order $k$.
\item $(a,b)$ denotes the gcd of $a$ and $b$.
\item $Z(G)$ denotes the center of the group $G$.
\item $G'$ denotes the commutator subgroup of $G$.
\item $|G : H|$ denotes the index of a subgroup $H$ in a group $G$.
\end{itemize}

\section{Background Information}

Before continuing to the main results, we establish some foundations.  

\subsection{Sylow's Theorem}

The results here are adapted from \cite[p. 139]{dummit}.

\begin{definition}

Let $G$ be a finite group, and let $p$ be a prime dividing its order.  
If the order of $G$ may be written as $p^am$ where $p \nmid m$, then a subgroup of $G$ with order
$p^a$ is called a \emph{Sylow $p$-subgroup} of $G$, and is usually denoted by $P_p$.
The number of Sylow $p$-subgroups of $G$ will be denoted by $n_p$.

\end{definition}

Now for Sylow's Theorem:

\begin{theorem} \label{thm:syl}

Let $G$ be a group of order $p^am$, where $p \nmid m$.
\begin{enumerate}

\item There is at least one Sylow $p$-subgroup of $G$
\item If $P$ is any Sylow $p$-subgroup of $G$ and $Q$ is any $p$-subgroup of $G$, there exists
$g \in G$ such that $Q \subseteq gPg^{-1}$, that is, every $p$-subgroup of $G$ is 
contained in some conjugate of any Sylow $p$-subgroup, and in particular 
any two Sylow $p$-subgroups are conjugate
\item The number $n_p$ of Sylow $p$-subgroups in $G$ satisfies $n_p \equiv 1$ (mod p)
and furthermore, $n_p$ is equal to $|G : N_G(P)|$, the index of the normalizer of any 
Sylow $p$-subgroup in $G$

\end{enumerate}

\end{theorem}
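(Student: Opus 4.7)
The plan is to prove the three parts in sequence, with (1) being the heart of the matter and (2)--(3) falling out of a single fixed-point counting argument. I will use only elementary group actions and orbit-stabilizer, avoiding induction on $|G|$.

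For existence in (1), I would use Wielandt's device: let $G$ act by left multiplication on the collection $\Omega$ of all subsets of $G$ of cardinality $p^a$. A computation of the $p$-adic valuation of $\binom{p^a m}{p^a}$ shows that $p \nmid |\Omega|$, so some orbit has size coprime to $p$. For any $S$ in such an orbit, orbit-stabilizer forces the stabilizer $G_S$ to have order divisible by $p^a$; conversely, fixing $s \in S$, the injection $h \mapsto hs$ from $G_S$ into $S$ gives $|G_S| \leq p^a$. Hence $|G_S| = p^a$ and $G_S$ is the desired Sylow $p$-subgroup.

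For (2) and (3), fix a Sylow $p$-subgroup $P$ and let $\mathcal{S}$ be its $G$-conjugacy class. Orbit-stabilizer immediately gives $|\mathcal{S}| = |G : N_G(P)|$, which is part of (3). Now let a $p$-subgroup $Q$ act on $\mathcal{S}$ by conjugation; a conjugate $R = gPg^{-1}$ is $Q$-fixed iff $Q \leq N_G(R)$. In that case $QR$ is a subgroup whose order $|Q||R|/|Q \cap R|$ is a power of $p$, and since $|R| = p^a$ is the maximum order of any $p$-subgroup of $G$, we get $QR = R$, i.e., $Q \leq R$. Applying this first with $Q = P$ shows that $P$ itself is the unique $P$-fixed point in $\mathcal{S}$, so $|\mathcal{S}| \equiv 1 \pmod p$. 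For a general $p$-subgroup $Q$, were there no $Q$-fixed points in $\mathcal{S}$ every orbit would have $p$-power size strictly greater than $1$, forcing $|\mathcal{S}| \equiv 0 \pmod p$ and contradicting what we just showed; so $Q$ fixes some $gPg^{-1}$, yielding (2). The congruence $n_p \equiv 1 \pmod p$ is then immediate from $n_p = |\mathcal{S}|$, completing (3).

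The main obstacle is the divisibility fact underpinning (1), namely $p \nmid \binom{p^a m}{p^a}$. I would prove it directly by writing the binomial coefficient as $\prod_{j=0}^{p^a - 1} (p^a m - j)/(p^a - j)$ and verifying that the $p$-adic valuations of numerator and denominator agree in each factor: for $j = 0$ both equal $a$, with ratio $m$ and $p \nmid m$; for $0 < j < p^a$, writing $j = p^b k$ with $\gcd(p,k) = 1$ and $b < a$ lets one factor $p^b$ out of both $p^a m - j$ and $p^a - j$, leaving units mod $p$. Kummer's theorem gives the same conclusion in one line. Everything else in the proof is orbit-stabilizer bookkeeping together with the elementary observation that a Sylow $p$-subgroup is maximal among $p$-subgroups of $G$.
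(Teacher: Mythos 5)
Your argument is correct, and for parts (2) and (3) it is essentially the paper's: both let a $p$-subgroup $Q$ act by conjugation on the set of conjugates of $P$ and count orbit sizes mod $p$, and your observation that a $Q$-fixed conjugate $R$ satisfies $QR = R$ and hence $Q \leq R$ is exactly the content of the paper's auxiliary Lemma~\ref{lem:syl} (that $Q \cap N_G(P) = Q \cap P$), packaged slightly differently. The genuine divergence is in part (1). The paper proves existence by induction on $|G|$: if $p$ divides $|Z(G)|$ it invokes Cauchy's theorem to produce a central subgroup of order $p$ and passes to the quotient; otherwise the class equation supplies a proper centralizer $C_G(g_i)$ whose order is still divisible by $p^a$, and the inductive hypothesis applies there. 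You instead use Wielandt's counting argument on the collection of $p^a$-element subsets of $G$ under left multiplication, extracting a Sylow subgroup as the stabilizer of a subset whose orbit has size coprime to $p$. Both are complete; what yours buys is the elimination of induction and of Cauchy's theorem as a prerequisite (Cauchy in fact falls out as a corollary of existence), at the price of the arithmetic input $p \nmid \binom{p^a m}{p^a}$, which your factor-by-factor comparison of $p$-adic valuations establishes correctly. One small bookkeeping remark: the identification $n_p = |\mathcal{S}|$ in your part (3) quietly uses that every Sylow $p$-subgroup is conjugate to $P$, which is the final clause of (2); since you derive (2) before invoking this, the order of deductions is sound, but it is worth saying explicitly.
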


First, we prove an auxiliary lemma about Sylow subgroups:

\begin{lemma} \label{lem:syl}

If $P$ is a Sylow $p$-subgroup of $G$, and $Q$ is any $p$-subgroup of $G$, then $Q \cap N_G(P) = Q \cap P$.

\end{lemma}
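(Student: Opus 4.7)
The plan is to prove the two inclusions $Q \cap P \subseteq Q \cap N_G(P)$ and $Q \cap N_G(P) \subseteq Q \cap P$ separately. The first inclusion is immediate: every Sylow $p$-subgroup is contained in its own normalizer, so $P \subseteq N_G(P)$, and intersecting both sides with $Q$ gives the desired containment. No real work is needed here.

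For the nontrivial direction, I would set $H := Q \cap N_G(P)$ and aim to show $H \subseteq P$ (which then gives $H \subseteq Q \cap P$ since $H \subseteq Q$ already). The key observation is that $H$ is a subgroup of $Q$, hence a $p$-group, and $H$ normalizes $P$ because $H \subseteq N_G(P)$. Therefore the product set $HP$ is actually a subgroup of $G$, and in fact a subgroup of $N_G(P)$.

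Next I would compute the order of $HP$ using the standard formula $|HP| = |H|\,|P|/|H \cap P|$. Since both $|H|$ and $|P|$ are powers of $p$, this shows $HP$ is itself a $p$-subgroup of $G$ that contains $P$. But $P$ is a Sylow $p$-subgroup, i.e.\ a maximal $p$-subgroup of $G$, so we must have $HP = P$, which forces $H \subseteq P$. This completes the argument.

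I do not expect any serious obstacle: the main conceptual point is simply recognizing that $H$ normalizes $P$, so $HP$ is a group, and then invoking the maximality of $P$ among $p$-subgroups to collapse $HP$ back down to $P$. The only mild care needed is to justify that $HP$ is a subgroup (which requires $H \subseteq N_G(P)$, not just $H \subseteq G$) and to note that its order is a $p$-power.
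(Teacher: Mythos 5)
Your proposal is correct and follows essentially the same route as the paper: set $H = Q \cap N_G(P)$, observe that $HP$ is a $p$-subgroup containing the Sylow $p$-subgroup $P$ (the paper asserts this directly, while you justify the $p$-power order via $|HP| = |H||P|/|H\cap P|$), and conclude $HP = P$, hence $H \subseteq P$. No gaps.
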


\begin{proof}

Write $Q \cap N_G(P) = H$.  Since $P \subseteq N_G(P)$, certainly $Q \cap P \subseteq Q \cap N_G(P)$, so we must only
prove the reverse inclusion.  Since clearly $H \subseteq Q$, we must prove that $H \subseteq P$.  

Since $H \subseteq N_G(P)$, we know that $PH$ is a subgroup of $G$.  We can also see that $PH$ is a $p$-group
that contains $P$ as a subgroup, so we can only have that $PH = P$, so indeed $H \subseteq P$.

\end{proof}

Now we are ready to prove Sylow's theorem in full:

\begin{proof}

We first prove the existence of Sylow subgroups of $G$ by induction on its order.  The base case is trivial, so assume that
for a given group $G$, all groups of order less than $G$ have Sylow subgroups.
For a prime $p$ dividing the order of $G$, we first suppose that $p \mid ord(Z(G))$.  In this case, Cauchy's theorem
implies that $Z(G)$ has a subgroup $N$ of order $p$.  Consider the group $G/N$, which has order $p^{a-1}m$.
The induction hypothesis implies that $G/N$ has a Sylow $p$-subgroup $P/N$ of order $p^{a-1}$.  But then $P$ is a subgroup
of $G$ having order $p^a$, so $G$ also has a Sylow $p$-subgroup.

Now, suppose that $p \nmid ord(Z(G))$.  Let $g_1, g_2, \dots g_r$ be representatives of the distinct non-central conjugacy classes
of $G$, and apply the class equation to get
$$ ord(G) = ord(Z(G)) + \sum\limits_{i=1}^r |G : C_G(g_i)| $$
Since $p \mid ord(G)$ but $p \nmid ord(Z(G))$ by assumption, there is some $i$ such that
$p \nmid |G : C_G(g_i)|$.  For this $i$, $ord(C_G(g_i)) = p^ak$, with $k < m$ since
$g_i \notin Z(G)$.  By the inductive hypothesis, this $C_G(g_i)$ has a subgroup of order 
$p^a$, and therefore so does $G$.  This proves $1$.

Now, for a given prime $p$ dividing the order of a group $G$, we know that there exists
some Sylow $p$-subgroup of $G$, which we denote by $P$.  Let 
$S = \{P_1,P_2, \dots ,P_r\}$ be the set of all conjugates of $P$ in $G$, and let $Q$ be any
$p$-subgroup of $G$.  The conjugation action of $Q$ on $S$ produces orbits
$O_1 \cup O_2 \cup \dots \cup O_s = S$.  Note that $r = \sum\limits_{i=1}^s ord(O_i)$.

Without loss of generality, relabel the elements of $S$ so that the first $s$ conjugates
are representatives of the orbits, so $P_i \in O_i$ for $1 \leq i \leq s$.   
For each $i$, we have that $ord(O_i)ord(N_Q(P_i)) = ord(Q)$.  However,
$N_Q(P_i) = N_G(P_i) \cap Q = P_i \cap Q$ by Lemma \ref{lem:syl}, so 
$ord(O_i) = |Q : P_i \cap Q|$ for $1 \leq i \leq s$.  

Now, note that $r$ is the number of conjugates of $P$ in $G$, and is therefore independent of
our choice of group $Q$ in the above conjugation action.  Therefore, we may calculate $r$ by taking
the specific case $Q = P_1$.  In this case certainly $ord(O_1) = 1$, while for $i > 1$, we must have
$ord(O_i) = |P_1 : P_1 \cap P_i| > 1$.  Furthermore, $|P_1 : P_1 \cap P_i|$ must be divisible by $p$,
since every group in the expression is a $p$-group.  
Since $r = ord(O_1) + \sum\limits_{i = 2}^s ord(O_i)$, we must have that 
$r \equiv 1 \text{ (mod }p)$.

Let $Q$ be any $p$-subgroup of $G$, and suppose that $Q$ is contained in no
conjugate of $P$.  Then certainly $Q \cap P_i \subsetneq Q$ for all $1 \leq i \leq s$,
so $ord(O_i) = |Q : Q \cap P_i|$ is divisible by $p$ for all $1 \leq i \leq s$, contradicting
$r \equiv 1 \text{ (mod }p)$.  Thus, $Q$ is contained in some conjugate of every Sylow $p$-subgroup of $G$,
which also implies that every Sylow $p$-subgroup of $G$ is conjugate.  This proves $2$.

But now every Sylow $p$-subgroup of $G$ is conjugate, and so there are precisely the $r$
that were in our set $S$, so $r = n_p$, and $n_p \equiv 1 \text{ (mod }p)$.  Furthermore, if all
Sylow $p$-subgroups are conjugate, we must also have $n_p = |G : N_G(P)|$ for any
Sylow $p$-subgroup of $G$, and this proves $3$.   

\end{proof}

The following characteristics of Sylow subgroups are also important:

\begin{cor} \label{cor:syl}

Let $P$ be a Sylow $p$-subgroup of $G$.
The following statements are equivalent:

\begin{enumerate}
\item $P$ is the unique Sylow $p$-subgroup of $G$
\item $P$ is normal in $G$
\item $P$ is characteristic in $G$
\end{enumerate}

\end{cor}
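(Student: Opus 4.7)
The plan is to prove the equivalence by establishing a short cycle of implications, exploiting Sylow's theorem (especially part 2, which says all Sylow $p$-subgroups are conjugate) as the bridge between uniqueness and normality.

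First I would show $(1 \Rightarrow 2)$. Since conjugation by any $g \in G$ is an automorphism, $gPg^{-1}$ is again a subgroup of order $p^a$, hence a Sylow $p$-subgroup. Uniqueness forces $gPg^{-1} = P$ for all $g$, which is exactly normality. Next, for $(2 \Rightarrow 1)$, I would appeal directly to Theorem \ref{thm:syl} part 3: if $P$ is normal then $N_G(P) = G$, so $n_p = |G : N_G(P)| = 1$. (Equivalently, all Sylow $p$-subgroups are conjugate to $P$, and all conjugates of $P$ are $P$ itself.)

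For the equivalence with characteristicity, $(3 \Rightarrow 2)$ is immediate: characteristic means invariant under every automorphism of $G$, so in particular under every inner automorphism, which is just normality. For $(2 \Rightarrow 3)$, I would combine normality with uniqueness (which we just proved equivalent). If $\phi \in \operatorname{Aut}(G)$, then $\phi(P)$ is a subgroup of $G$ of the same order $p^a$, hence a Sylow $p$-subgroup; by the unique Sylow $p$-subgroup conclusion, $\phi(P) = P$. Thus $P$ is fixed by every automorphism, i.e.\ characteristic.

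None of the steps looks like a serious obstacle, since Sylow's theorem does all the heavy lifting. The only subtlety worth flagging explicitly is that the argument for $(2 \Rightarrow 3)$ uses the previously established implication $(2 \Rightarrow 1)$: the key point is that automorphisms preserve order and hence permute Sylow $p$-subgroups, so invariance follows as soon as there is only one such subgroup to permute.
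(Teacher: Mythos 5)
Your proof is correct and follows essentially the same route as the paper: conjugates of $P$ are Sylow $p$-subgroups (giving $1 \Leftrightarrow 2$), characteristic trivially implies normal, and uniqueness implies characteristic since automorphisms permute the Sylow $p$-subgroups. No gaps.
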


\begin{proof}

Suppose $P$ is the unique Sylow $p$-subgroup of $G$.  Then since the set of conjugates
of $P$ in $G$ is precisely the set of Sylow $p$-subgroups in $G$, we can only have that
$gPg^{-1} = P$ for all $g \in G$, so $P \triangleleft G$.  Conversely, if $P \triangleleft G$,
then for any other Sylow $p$-subgroup $Q$ of $G$, we have $Q$ is conjugate to $P$, so for some 
$g \in G$ we have $Q = gPg^{-1} = P$, so $P$ is the unique Sylow $p$-subgroup of $G$,
and $1$ and $2$ are equivalent.  However, characteristic implies normal, and uniqueness implies 
characteristic, so all of $1$, $2$, and $3$ are equivalent.

\end{proof}

\subsection{Solvable Groups}

The results in this section are adapted from \cite[p. 138]{hall}.

\begin{definition}

The \emph{derived series} of a group $G$ is the series
$$ G = G_0 \supseteq G_1 \supseteq G_2 \supseteq \dots $$
where $G_{i+1}$ is the commutator subgroup of $G_i$.
A group $G$ is said to be \emph{solvable} if its derived series terminates in the identity
after a finite number of terms.

\end{definition}

For example, $S_3$ has derived series $S_3, A_3, (1)$, so $S_3$ is solvable.  The smallest example of a nonsolvable group is
$A_5$, and in general $A_k$ is not solvable for any $k \geq 5$.  

It is not hard to show that if $G$ is solvable and $H$ is normal in $G$ that $H$ and $G/H$ are solvable.  Of particular
interest is that the converse is true:

\begin{lemma} \label{lem:solv}

Suppose that $G$ is a group with a normal subgroup $H$, and suppose further that both $H$ and 
$G/H$ are solvable.  Then $G$ is solvable.

\end{lemma}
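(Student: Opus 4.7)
The plan is to work directly with the derived series and use the standard compatibility of the commutator subgroup with quotients. First I would record the auxiliary fact that for any surjective homomorphism $\phi : G \to K$, we have $\phi(G') = K'$. This follows because $\phi$ sends commutators to commutators and is surjective, so generators of $K'$ are hit. By a straightforward induction on $i$ this gives $\phi(G^{(i)}) = K^{(i)}$ for every $i \geq 0$, where $G^{(i)}$ denotes the $i$-th term of the derived series.

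Next I would apply this with $K = G/H$ and $\phi$ the canonical projection $\pi : G \to G/H$. Since $G/H$ is solvable, there exists some $k$ with $(G/H)^{(k)}$ trivial. Combined with the previous step this yields $\pi(G^{(k)}) = \{e\}$, which is exactly the statement $G^{(k)} \subseteq H$.

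Once $G^{(k)}$ sits inside $H$, I would exploit the obvious monotonicity of the derived series: if $A \subseteq B$ then $A' \subseteq B'$, and hence $A^{(j)} \subseteq B^{(j)}$ for all $j$. Since $H$ is solvable, pick $\ell$ with $H^{(\ell)} = \{e\}$. Then
\[
G^{(k+\ell)} = \bigl(G^{(k)}\bigr)^{(\ell)} \subseteq H^{(\ell)} = \{e\},
\]
so the derived series of $G$ terminates after at most $k + \ell$ steps, proving $G$ solvable.

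I do not expect a serious obstacle here; the only step requiring a little care is verifying $\phi(G') = K'$ for surjective $\phi$, but this is immediate from the definition of the commutator subgroup as the subgroup generated by commutators. The rest is bookkeeping on indices of the derived series.
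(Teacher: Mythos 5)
Your proof is correct and follows essentially the same route as the paper: both arguments use that the derived series of $G/H$ is the image of the derived series of $G$ under the projection, conclude $G^{(k)} \subseteq H$, and then finish using the solvability of $H$. Your version is in fact more carefully justified than the paper's, since you make explicit the monotonicity step $G^{(k+\ell)} = (G^{(k)})^{(\ell)} \subseteq H^{(\ell)}$ that the paper glosses over with ``the remainder of the derived series for $G$ terminates at the identity.''
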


\begin{proof}

Since $G/H$ is solvable, its derived series is of the form 
$$G/H = G_0/H \supseteq G_1/H \supseteq \dots \supseteq \{e\} = H/H$$
On the other hand, this quotient series lifts to the first part of the derived series for 
$G$ satisfying 
$$G = G_0 \supseteq G_1 \supseteq \dots \supseteq H$$
but since $H$ is solvable, the remainder of the derived series for $G$ terminates at the identity,
so $G$ is solvable.

\end{proof}

\subsection{Hall $\pi$-subgroups}

The results in this section are taken from \cite[p.200]{dummit}

\begin{definition}

Let $G$ be a finite group with order $n$, and let $\pi$ be a set of primes dividing $n$.  A \emph{Hall $\pi$-subgroup} $H$
of $G$ is a subgroup such that every prime dividing $|H|$ is in $\pi$, and no prime dividing $|G:H|$ is in $\pi$.      

\end{definition}

As an example, the alternating group of five elements $A_5$ has order $60 = 2^2 \times 3 \times 5$, so
$A_4$ is a Hall $\{2,3\}$-subgroup of $A_5$.

While all possible Sylow $p$-subgroups always exist for any finite group $G$, Hall $\pi$-subgroups need not exist for all
possible $\pi$.  For example, while $A_5$ has a Hall $\{2,3\}$-subgroup, it has neither a
Hall $\{2,5\}$-subgroup nor a Hall $\{3,5\}$-subgroup.  However, if $G$ is solvable, then it has all possible Hall
subgroups: 
   
\begin{theorem} \label{thm:hall}

If $G$ is any finite solvable group and $\pi$ is any set of primes, then G has a Hall-$\pi$
subgroup.

\end{theorem}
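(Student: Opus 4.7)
My plan is to prove the theorem by strong induction on $|G|$, exploiting the fact that in any solvable group every minimal normal subgroup is elementary abelian. I would first record this structural fact: if $N$ is a minimal normal subgroup of a solvable $G$, then $N$ is an elementary abelian $p$-group for some prime $p$. The commutator subgroup $N'$ is characteristic in $N$ and hence normal in $G$, so by minimality either $N' = 1$ or $N' = N$; solvability of $N$ rules out the latter, so $N$ is abelian. For each prime $q \mid |N|$, the unique Sylow $q$-subgroup of $N$ is characteristic, hence normal in $G$, hence equal to $N$, which forces $N$ to be a $p$-group for a single prime $p$. Finally $\{x \in N : x^p = 1\}$ is a nontrivial characteristic subgroup, so $N$ has exponent $p$.

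With this preliminary in hand, fix a solvable $G$ of order larger than $1$ and a minimal normal subgroup $N$ with $|N| = p^a$. If $p \in \pi$, apply induction to $G/N$ to obtain a Hall $\pi$-subgroup $H/N$ with $N \leq H \leq G$. Since $p \in \pi$, every prime dividing $|H| = p^a \cdot |H/N|$ lies in $\pi$, and by the third isomorphism theorem $|G:H| = |G/N : H/N|$ has only non-$\pi$ prime factors, so $H$ is the desired Hall $\pi$-subgroup. If $p \notin \pi$, again apply induction to $G/N$ to obtain a Hall $\pi$-subgroup $H/N$; when $H$ is proper in $G$, applying induction to $H$ yields a Hall $\pi$-subgroup $K \leq H$. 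Since $p \notin \pi$, the $\pi$-part of $|H|$ coincides with the $\pi$-part of $|H/N|$, which is all of $|H/N|$ (itself a $\pi$-number), and this equals the $\pi$-part of $|G/N| = |G|/p^a$, which equals the $\pi$-part of $|G|$; hence $K$ is a Hall $\pi$-subgroup of $G$.

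The only remaining case is $p \notin \pi$ with $H = G$, meaning $G/N$ is itself a $\pi$-group, so $|G| = p^a \cdot m$ with $m = |G/N|$ coprime to $p$, and $N$ is a Sylow $p$-subgroup of $G$. Here I must exhibit a complement to $N$ in $G$, namely a subgroup of order $m$ meeting $N$ trivially. This is the real obstacle, and is essentially the Schur--Zassenhaus theorem with abelian kernel. I would prove it by a $1$-cocycle argument: choose any set-theoretic section $s : G/N \to G$ of the quotient map and form the defect $f(\bar g, \bar h) := s(\bar g)\,s(\bar h)\,s(\bar g \bar h)^{-1}$, which lands in the abelian group $N$ and satisfies the $2$-cocycle identity by associativity of $G$. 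Since $\gcd(|N|, m) = 1$, the integer $m$ is invertible modulo the exponent of $N$, so averaging $f$ appropriately over the $m$ cosets of $N$ and dividing by $m$ inside $N$ produces a $1$-cochain whose coboundary is $f$; adjusting $s$ by this cochain turns it into a genuine homomorphism $\sigma : G/N \to G$, and the image of $\sigma$ is the required complement. This closes the induction.
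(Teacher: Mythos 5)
Your proposal is correct, and it follows the paper's strategy exactly up to the last case: same induction on $|G|$, same use of the fact that a minimal normal subgroup $N$ of a solvable group is elementary abelian, and the same two easy cases ($p \in \pi$, and $p \notin \pi$ with the preimage $H$ proper in $G$). Where you diverge is the terminal case in which $p \notin \pi$ and $G/N$ is a $\pi$-group, so that $N$ is an abelian normal Sylow $p$-subgroup needing a complement. The paper stays entirely inside the induction: it takes a second minimal normal subgroup $M/N$ of $G/N$, which is a $q$-group for some $q \in \pi$, picks a Sylow $q$-subgroup $Q$ of $M$, and either $Q \triangleleft G$ (pass to $G/Q$) or Frattini's argument gives $G = M N_G(Q)$ with $|G : N_G(Q)|$ a power of $p$, so the inductive hypothesis applied to the proper subgroup $N_G(Q)$ finishes. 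You instead invoke the abelian-kernel case of Schur--Zassenhaus, proved by the standard cohomological averaging: the obstruction $2$-cocycle with values in the abelian group $N$ is killed by $m = |G/N|$, which is invertible modulo the exponent of $N$, so the extension splits and the image of the resulting splitting is the complement. Both arguments are sound. Yours is more direct in that it resolves the hard case in one self-contained step and makes visible the real content (a splitting theorem for coprime abelian kernels), at the cost of importing the cocycle formalism and the conjugation action of $G/N$ on $N$, which you would need to spell out to make the $2$-cocycle identity and the averaging precise. The paper's route is longer but purely combinatorial group theory, needing only Frattini's argument (which it proves inline) and no cohomology; it also illustrates how far the bare induction can be pushed. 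If you write yours up in full, do state explicitly that the complement has order $m$, is therefore a $\pi$-group, and has index $p^a$ with $p \notin \pi$, so it is indeed a Hall $\pi$-subgroup.
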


We first prove a very important lemma that will be used here and again in the section on
supersolvable numbers:

\begin{lemma} \label{lem:elem}

Let $G$ be a finite solvable group, and let $N$ be a minimal normal subgroup of $G$ (meaning that
$N \triangleleft G$, and when there exists $H \triangleleft G$ 
such that $\{e\} \subseteq H \subseteq N$, it must
be that $H = \{e\}$ or $H = N$).  Then $N$ must be elementary abelian 
(i.e. is an abelian group where every nonidentity element has the same prime order).

\end{lemma}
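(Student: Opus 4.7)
The plan is to reduce to elementary abelian in three steps by repeatedly producing a proper characteristic subgroup of $N$, noting that any characteristic subgroup of $N$ is normal in $G$ (since $N \triangleleft G$), and then using minimality of $N$ to force the characteristic subgroup to be trivial.

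\textbf{Step 1: $N$ is abelian.} Since $G$ is solvable and $N \triangleleft G$, $N$ is solvable, so the derived series of $N$ terminates. In particular, $N' \subsetneq N$ (assuming $N$ is nontrivial, which I may). Since $N'$ is characteristic in $N$, it is normal in $G$, and by minimality of $N$ we conclude $N' = \{e\}$.

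\textbf{Step 2: $N$ is a $p$-group for some prime $p$.} Let $p$ be any prime dividing $|N|$, and let $P$ be a Sylow $p$-subgroup of $N$. Since $N$ is abelian by Step 1, $P$ is the unique Sylow $p$-subgroup of $N$, and hence characteristic in $N$ by Corollary~\ref{cor:syl}. Therefore $P \triangleleft G$, and minimality of $N$ forces $P = N$.

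\textbf{Step 3: Every nontrivial element of $N$ has order exactly $p$.} Because $N$ is abelian, the $p$-th power map $x \mapsto x^p$ is a group homomorphism from $N$ to itself, so its image $N^p := \{x^p : x \in N\}$ is a subgroup of $N$. This subgroup is invariant under every automorphism of $N$, so it is characteristic in $N$, hence normal in $G$. By minimality, $N^p = \{e\}$ or $N^p = N$. The latter is impossible: if $N$ is a nontrivial finite abelian $p$-group, then $N/N^p$ is a nontrivial elementary abelian $p$-group (it is annihilated by $p$ and equals $N$ modulo the image of the $p$-th power map, which is proper by a cardinality argument on the decomposition of $N$ into cyclic $p$-groups). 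Hence $N^p = \{e\}$, so every element of $N$ has order dividing $p$, completing the proof that $N$ is elementary abelian.

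The only substantive obstacle is ruling out $N^p = N$ in Step 3; this requires the standard observation that the $p$-th power map on a nontrivial finite abelian $p$-group is never surjective, which one can justify either by the structure theorem for finite abelian groups or by a direct order-counting argument on a cyclic factor of largest exponent.
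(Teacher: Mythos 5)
Your proof is correct and follows essentially the same route as the paper's: kill the commutator subgroup by minimality plus solvability, identify $N$ with its unique (hence characteristic) Sylow $p$-subgroup, and then show the characteristic subgroup $N^p$ must be trivial. The only cosmetic difference is in the last step, where the paper rules out $N^p = N$ by citing Cauchy's theorem (an element of order $p$ gives the $p$-th power map a nontrivial kernel, so its image is proper), while you invoke the structure of finite abelian $p$-groups; both are fine.
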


\begin{proof}

Note that $N$ is solvable since it is a subgroup of
the solvable group $G$.  If $N = \{e\}$, the claim is obvious.  
If $N$ is nontrivial, we consider its commutator subgroup 
$[N,N]$.  This commutator is characteristic in $N$, so since $N \triangleleft G$, we have
$[N,N] \triangleleft G$.  Since $N$ is minimal normal, it must be that $[N,N] = \{e\}$ or
$[N,N] = N$.  However, $[N,N] = N$ would imply that the derived series for $N$ is a single repeating term
that is not the identity, so then $N$ would not be solvable, a contradiction.  It must be then that
$[N,N] = \{e\}$, which implies that $N$ is abelian.

Now let $p$ be a prime that divides $|N|$.  Then $N$ has Sylow $p$-subgroups which are all conjugate to
each other, but since $N$ is abelian, conjugation is trivial, so there is a unique 
Sylow $p$-subgroup $P$.  Then $P$ is characteristic in $N$ and $N \triangleleft G$, so 
$P \triangleleft G$.  Clearly $P$ is nontrivial, so $P = N$, and $N$ is an
abelian $p$-group.

Finally, we let $pN$ denote the set $n^p | n \in N$.  It is easy to check that this is a subgroup of $N$, and that
it is characteristic since it is invariant under isomorphisms of $N$.  Again, we may conclude that
$pN = \{e\}$ or $pN = N$.  However, Cauchy's theorem tells us that $N$ has at least one element of order $p$, so
that $pN$ is strictly contained in $N$.  Therefore, $pN = \{e\}$, so every element of $N$ must have order $p$,
implying that $N$ is elementary abelian.       
  
\end{proof}

Now for the existence of Hall $\pi$-subgroups:

\begin{proof}

Fix any set $\pi$ of primes.  We prove by induction that every finite solvable group $G$ has 
Hall $\pi$-subgroups.  In the base case, the trivial group is a Hall $\pi$-subgroup of itself for
any $\pi$, and certainly our particular choice.  Now let $G$ be a finite solvable group of order $n$, 
and suppose that every solvable group of order less than $n$ has a Hall $\pi$-subgroup.  Let $N$ be some 
minimal normal subgroup of $G$.  By the above lemma, $N$ is elementary abelian, and in particular $N$
is a $p$-group for some prime $p$.  Since quotient groups of solvable groups are solvable, 
it must be that $G/N$ is solvable, so by the inductive hypothesis, $G/N$ has a Hall
$\pi$-subgroup.  If $p \in \pi$, then the Hall $\pi$-subgroup of $G/N$ naturally corresponds to 
a Hall $\pi$-subgroup of $G$.

Now we suppose that $p$ is not in $\pi$.  It is still true that $G/N$ has a Hall $\pi$-subgroup
$K/N$, and this corresponds to a subgroup $K$ of $G$.  Since subgroups of solvable groups
are solvable, if $K$ is a proper subgroup of $G$, then by the inductive hypothesis $K$ has a 
Hall $\pi$-subgroup $H$.  Now every prime dividing $|H|$ is in $\pi$, but no prime dividing $|G:K|$ or 
$|K : H|$ is in $\pi$, so no prime dividing $|G:K||K:H| = |G:H|$ is in $\pi$.  It follows that $H$ is a 
Hall $\pi$-subgroup of $G$.  

It remains to consider when $p$ is not in $\pi$, and the Hall $\pi$-subgroup of $G/N$ is itself.  In this case,
since $N$ is a $p$-group and $|G:N|$ is relatively prime to $|N|$, it must be that 
$N$ is a Sylow $p$-subgroup of $G$.  Let $M/N$ be a minimal normal subgroup of $G/N$, which also
corresponds to a normal subgroup $M$ of $G$.  It must be that $M/N$
is of prime power order, but it cannot be of $p$-power order, since $p \nmid |G : N|$.  Thus,
$M/N$ is an elementary abelian group of $q$-power order for some prime $q \neq p$.  
Note that $p$ and $q$ are then
the only distinct prime factors dividing $M$.  Also, $q$ divides $|G:N|$, and since $G/N$ is the Hall $\pi$-subgroup of itself, it must be that
$q \in \pi$.  Since $q$ divides $|M|$, $M$ has some Sylow $q$-subgroup $Q$.  If $Q \triangleleft G$, then 
$G/Q$ has a Hall $\pi$-subgroup $H/Q$, but since $Q$ is of $q$-power order and $q \in \pi$, the corresponding
subgroup $H$ of $G$ is a Hall $\pi$-subgroup.

We now prove Frattini's Argument, which states that if $M \triangleleft G$ and $Q$ is a Sylow $q$-subgroup of $M$ then
$G = MN_G(Q)$.  To show this, consider the subgroup $gQg^{-1}$ formed by conjugating $Q$ with any $g \in G$. 
Since $Q \subseteq M$ and $M \triangleleft G$, $gQg^{-1} \subseteq M$.   Since all Sylow $q$-subgroups of $M$ are 
conjugate, it must be that $Q$ and $gQg^{-1}$ are conjugate in $M$, so 
there must be some $m \in M$ such that $gQg^{-1} = mQm^{-1}$.  Rearranging, we have that
$m^{-1}gQg^{-1}m = Q$, so that $m^{-1}g \in N_G(Q)$.  But then any $g \in G$ may be written as
$m(m^{-1}g)$, a product of elements in $M$ and $N_G(Q)$, so $G = MN_G(Q)$.

Continuing the original proof, we may now assume that $Q$ is not normal in $G$.   By Frattini's argument,
$G = MN_G(Q)$, and it follows that $|G|\times|M \cap N_G(Q)| = |M| \times |N_G(Q)|$, or equivalently
$|G|/|N_G(Q)| = |M|/|M \cap N_G(Q)|$.  Note that since $Q \subseteq M \cap N_G(Q)$, we have that
$|M|/|M \cap N_G(Q)|$ divides $|M|/|Q|$.  However, the only prime factors of $|M|$ are $p$ and $q$, so since
$Q$ is a Sylow $q$-subgroup of $M$, $|M|/|Q|$ is a power of $p$, and then so is
$|M|/|M \cap N_G(Q)| = |G|/|N_G(Q)|$.  Since $Q$ is not normal in $G$, 
$N_G(Q)$ is a proper subgroup of $G$ that has a Hall $\pi$-subgroup $H$ by the inductive hypothesis.  Since
$|G|/|N_G(Q)|$ is a power of $p \notin \pi$, $H$ is also a Hall $\pi$-subgroup of $G$, completing the proof.   

\end{proof}

\section{Cyclic Numbers}

Lagrange's theorem tells us that any prime number $p$ must be a cyclic number, but there are also many composite cyclic numbers.
According to a well-known application of Cauchy's theorem, if $p$ and $q$ are prime numbers with $p>q$ and $q\nmid p-1$, 
then every group of order $pq$ is cyclic (\cite[p. 91]{herstein}).  This basic idea motivates the following theorem:

\begin{theorem}

A positive integer $n$ is a cyclic number $\iff (n, \phi(n)) = 1$.

\end{theorem}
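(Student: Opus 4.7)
I would prove the two implications separately, with the reverse direction by strong induction on $n$.

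For the forward implication I would argue the contrapositive: given $(n, \phi(n)) > 1$, produce a non-cyclic group of order $n$. If $p^2 \mid n$ for some prime $p$, the direct product $C_p \times C_p \times C_{n/p^2}$ is abelian of order $n$ but contains a copy of $C_p \times C_p$, hence has no element of order $n$ and is not cyclic. Otherwise $n$ is squarefree, and $(n, \phi(n)) > 1$ forces a pair of primes $p, q \mid n$ with $p \mid q-1$. Then a non-trivial homomorphism $C_p \to \mathrm{Aut}(C_q) \cong C_{q-1}$ exists, yielding a non-abelian semidirect product $C_q \rtimes C_p$ of order $pq$; taking its direct product with $C_{n/(pq)}$ gives a non-abelian (hence non-cyclic) group of order $n$.

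For the reverse implication, suppose $(n, \phi(n)) = 1$ and let $G$ have order $n$. First, $n$ must be squarefree, since otherwise $p^2 \mid n$ would force $p \mid \phi(p^2) \mid \phi(n)$. Fix any prime $p \mid n$ and a Sylow $p$-subgroup $P$; since $|P| = p$, we have $\mathrm{Aut}(P) \cong C_{p-1}$, and $N_G(P)/C_G(P)$ embeds into $\mathrm{Aut}(P)$. Its order must therefore divide $\gcd(n, p-1)$, which in turn divides $(n, \phi(n)) = 1$, so $N_G(P) = C_G(P)$. Consequently $P$ lies in the center of $N_G(P)$. At this point I would invoke Burnside's normal $p$-complement theorem to conclude that $G$ has a normal subgroup $K$ of order $n/p$ with $G = K \rtimes P$. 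The induction hypothesis applies to $K$ (since $(n/p, \phi(n/p))$ divides $(n, \phi(n)) = 1$), so $K$ is cyclic. The conjugation action $P \to \mathrm{Aut}(K)$ factors through a group whose order divides both $p$ and $\phi(n/p)$; but $\phi(n/p)$ divides $\phi(n)$ and is therefore coprime to $p$, so the action is trivial. Hence $G = K \times P$, a direct product of cyclic groups of coprime orders, which is cyclic.

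The main obstacle is the appeal to Burnside's normal $p$-complement theorem, which has not been developed in the excerpt. If that result is unavailable, a substitute is required: one could first establish that every group of squarefree order is solvable and then use Hall's theorem (already proved) to extract a Hall $p'$-subgroup as a normal complement, or attempt a more delicate simultaneous Sylow-counting argument across all primes dividing $n$ to produce \emph{some} proper non-trivial normal subgroup of $G$, to which the induction hypothesis and a Schur--Zassenhaus-style splitting can then be applied.
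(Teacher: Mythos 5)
Your proof is correct, but the sufficiency direction takes a genuinely different route from the paper's. The paper argues by minimal counterexample and never invokes transfer theory: it shows that a minimal non-cyclic $G$ of order $n$ would have trivial center, no proper nontrivial normal subgroups, and pairwise trivially intersecting maximal subgroups, and then derives a numerical contradiction by partitioning the $n-1$ nonidentity elements of $G$ among the conjugates of the maximal subgroups (the identity $1 - 1/n = \sum_i (1-1/m_i)$ forces $k<2$). Your argument instead runs a clean induction: the hypothesis $(n,\phi(n))=1$ forces $N_G(P)=C_G(P)$ for each Sylow $P$ (of prime order, since $n$ is squarefree), Burnside's normal complement theorem peels off a normal subgroup $K$ of index $p$, induction makes $K$ cyclic, and coprimality of $p$ with $\phi(n/p)$ kills the action, giving $G \cong K \times P$. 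All the divisibility claims you need ($\phi(n/p)\mid\phi(n)$, $\gcd(p-1,n)=1$, etc.) do hold for squarefree $n$, so the argument is sound. What each approach buys: yours is shorter and more conceptual, but leans on Burnside's normal $p$-complement theorem, which this paper only proves later (in the section on abelian numbers, via the group transfer) --- there is no logical circularity, since that proof is independent of this theorem, but as written the exposition order would have to change. The paper's counting argument is longer and more delicate but entirely elementary, using nothing beyond Sylow theory and the class equation, which is presumably why it is placed first. Your suggested fallback (solvability of squarefree-order groups plus Hall's theorem) would also work but is no lighter than simply citing Burnside.
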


The following proof is from \cite{haines1}:

\begin{proof}

Suppose that there is some prime $q$ such that $q^2\mid n$.  Then since $q\mid \phi(q^k)$ for $k>1$, we have $q\mid \phi(n)$ and $q\mid n$, so 
certainly $(n, \phi(n)) \neq 1$.  Furthermore, the group $C_q \times C_{n/q}$ is a group of order $n$ that cannot be cyclic, as $(q, n/q) \neq 1$.

For the remainder of the proof, we may assume that $n$ is square-free, so $n$ = $p_1p_2 \dots p_r$ 
for distinct primes $p_1, p_2, \dots, p_r$.  Suppose that $(n, \phi(n)) \neq 1$.  Then since $n$ is square-free, we must have that
$p_i\mid p_j-1$ for some primes $p_i$ and $p_j$ dividing $n$.  Therefore, there exists a nontrivial homomorphism 
$h: C_{p_i}\rightarrow Aut(C_{p_j})$, and a corresponding nontrivial semidirect product $C_{p_j} \rtimes_h C_{p_i}$ which produces a noncyclic group
of order $p_ip_j$.  It follows that the direct product of this group with $C_{n/p_ip_j}$ is a group of order $n$
which is not cyclic, proving that $(n, \phi(n)) = 1$ is necessary for $n$ to be a cyclic number.

To prove sufficiency, we proceed by contradiction.  Let $n$ be the smallest positive integer that is square-free and
satisfies $(n, \phi(n)) = 1$ but is not a cyclic number, and take $G$ to be a group of order $n$ that is not cyclic.  Cauchy's theorem tells us
that there exist elements $x_i$ in $G$ for $1\leq i \leq r$ such that $ord(x_i) = p_i$ for each $i$.  If $G$ were abelian,
the product $x = x_1x_2 \dots x_r$ would be an element of order $n$, so $G$ would be cyclic, a contradiction. It is also clear that 
if $d\mid n$, then $(n, \phi(d)) = 1$, and $(d, \phi(d)) = 1$.  In particular, this means that the minimality of $n$ as a 
supposed counterexample implies if $d\mid n$ and $d < n$, then $d$ is a cyclic number, 
so it follows that all proper subgroups and quotient groups of $G$ are cyclic.

We now prove that $Z(G)$ is trivial.  Suppose that $ord(Z(G)) > 1$.  
In this case, the quotient group $G/Z(G)$ must be cyclic as noted above.  However, $G/Z(G)$ cyclic implies that $G$ is abelian, so
$Z(G)$ must be trivial.

We claim that $G$ has no normal subgroups other than $G$ and $\{e\}$.  Let $H$ be a normal subgroup of $G$ with $H \neq G$, and let $d$ denote the order of $H$.
Since $H$ is proper in $G$, $H$ must be cyclic. Let $c: G\rightarrow Aut(H)$ be the homomorphism induced by the 
conjugation action of $G$ on $H$.  The first isomorphism theorem for groups tells us that $G/ker(c)$ is isomorphic to a subgroup of $Aut(H)$, so
its order divides $\phi(d)$.  On the other hand, certainly $|G/ker(c)|$ divides $n$, so since $(n, \phi(d)) = 1$, we must have 
$|G/ker(c)| = 1$.  Thus the kernel of the homomorphism $c$ is the entire group $G$, so since $c$ is conjugation, we have that
every element of $G$ commutes with every element of $H$.  Therefore, $H \subseteq Z(G)$, but since $Z(G)$ is trivial, $H$ must be trivial as 
well.  Thus, if $H \triangleleft G$ and $H \neq G$, it must be that $H = \{e\}$. 

We also claim that the intersection of any two distinct proper maximal subgroups of $G$ is trivial.  Let $K$ and $L$ denote two different
proper maximal subgroups of $G$, and consider their intersection subgroup $K \cap L$, and its centralizer $C_G(K \cap L)$.
By assumption, $K$ and $L$ are both cyclic, so both abelian, implying that $K$ and $L$ both centralize $K \cap L$.  Therefore, 
$C_G(K \cap L)$ contains both $K$ and $L$, but since $K$ and $L$ are maximal and distinct, we can only have $C_G(K \cap L) = G$. 
Thus, $K \cap L \subset Z(G)$, but since $Z(G)$ is trivial, so is $K \cap L$.  One immediate corollary of this
is that the nonidentity elements of $G$ are partitioned by the nonidentity elements of the distinct maximal subgroups of $G$, that is,
every element in $G\setminus\{e\}$ is in exactly one of the $M_i\setminus\{e\}$, where $M_1,M_2,..., M_k$ are the maximal subgroups of $G$.

Now, we consider the action of conjugation by elements of $G$ on the set of its maximal subgroups.  Let $k$ be the number of orbits 
created by the conjugation action, and let $M_i$ denote a representative of each orbit, for $1 \leq i \leq k$.  For notational simplicity, let
$ord(M_i) = m_i$.  For each $M_i$ representing an orbit, we certainly have that the stabilizer of $M_i$ contains $M_i$.  However, the 
stabilizer of $M_i$ cannot be $G$, because we proved above that we cannot have $M_i$ normal in $G$.  Since $M_i$ is maximal, we can only have
that $M_i$ is its own stabilizer, implying that the orbit it represents has order $n/m_i$.

Therefore, we can use the partition established above to write that 
$$n - 1 = \sum\limits_{i = 1}^k (n / m_i)(m_i - 1)$$
because the number of nonidentity elements of $G$ equals the number of nonidentity elements in the distinct maximal subgroups of $G$.
Dividing by $n$, we obtain
$$ 1 - 1/n = \sum\limits_{i = 1}^k (1 - 1/m_i)$$
and expanding the summation,
$$ 1 - 1/n = k - \sum\limits_{i=1}^k 1/m_i$$
If $k = 1$, then $n = m_1$, impossible.  If $k \geq 2$, note that because each $m_i > 1$, we have $1/m_i \leq 1/2$.  Thus, after rearranging,
$k - 1 + 1/n \leq k/2 \implies 2k - 2 + 2/n \leq k \implies k + 2/n \leq 2$, so $k < 2$, a contradiction.  Therefore, our desired counterexample group $G$
cannot exist, and the theorem is proven.

\end{proof}

\section{Abelian Numbers}

Before trying to prove which positive integers $n$ are abelian numbers, we give some preliminary results.

\subsection[Nonabelian Groups of Order the Cube of a Prime]{Existence of Nonabelian Groups of Order the Cube of a Prime}

\begin{lemma}\label{lem:prime-cubed}

For every prime number $p$, there exists a nonabelian group of order $p^3$.

\end{lemma}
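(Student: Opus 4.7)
The plan is to construct a nonabelian group of order $p^3$ explicitly as a semidirect product, in the same spirit as the semidirect-product construction used in the proof of the cyclic numbers theorem. The key is to find a nontrivial action of $C_p$ on a group of order $p^2$, which automatically produces a non-commutative extension of the expected order.

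First, I would take $V$ to be the elementary abelian group $C_p \times C_p$, viewed as a $2$-dimensional vector space over $\mathbb{F}_p$; with this identification $\mathrm{Aut}(V) \cong GL_2(\mathbb{F}_p)$. Second, I would exhibit an element of order exactly $p$ inside $GL_2(\mathbb{F}_p)$, namely the transvection
\[
M = \begin{pmatrix} 1 & 1 \\ 0 & 1 \end{pmatrix}.
\]
A quick induction gives $M^k = \begin{pmatrix} 1 & k \\ 0 & 1 \end{pmatrix}$, so $M$ has order $p$ in $GL_2(\mathbb{F}_p)$. Sending a fixed generator of $C_p$ to $M$ then defines a nontrivial homomorphism $\psi : C_p \to \mathrm{Aut}(V)$.

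Third, I would form the semidirect product $G = V \rtimes_\psi C_p$, whose order is $|V| \cdot |C_p| = p^3$. To conclude that $G$ is nonabelian, I would observe that because $\psi$ is not the trivial homomorphism, there exist $v \in V$ and $g \in C_p$ with $\psi(g)(v) \neq v$, and under the standard semidirect-product multiplication this translates directly into $gvg^{-1} \neq v$ inside $G$.

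The construction is uniform in $p$: no case split is needed, and for the edge case $p = 2$ it happens to recover the dihedral group of order $8$. There is no real obstacle of substance here; the only step where primality of $p$ enters is the verification that $M$ has order exactly $p$, which is a routine calculation. An alternative I briefly considered is the Heisenberg group of $3\times 3$ upper unitriangular matrices over $\mathbb{F}_p$, which also works uniformly, but the semidirect-product route reuses machinery already deployed in the cyclic numbers section and so fits more smoothly into the exposition.
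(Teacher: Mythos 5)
Your proposal is correct, but it takes a genuinely different route from the paper. The paper exhibits the group concretely as the Heisenberg group of $3\times 3$ upper unitriangular matrices over $\mathbb{F}_p$ and verifies nonabelianness by a single explicit computation of two products that disagree. You instead build $G = (C_p\times C_p)\rtimes_\psi C_p$, where $\psi$ sends a generator of $C_p$ to the transvection $\left(\begin{smallmatrix}1&1\\0&1\end{smallmatrix}\right)$ of order $p$ in $GL_2(\mathbb{F}_p)\cong \mathrm{Aut}(C_p\times C_p)$, and then invoke the general principle that a semidirect product with nontrivial action is nonabelian. Both arguments are complete and uniform in $p$ (including $p=2$, where each yields the dihedral group of order $8$); in fact the two constructions produce isomorphic groups, since conjugation by the complement $\{(0,0,c)\}$ in the Heisenberg group acts on the normal subgroup $\{(a,b,0)\}\cong C_p\times C_p$ precisely by transvections. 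What the paper's version buys is self-containedness: one two-line matrix multiplication and no appeal to semidirect products or to $\mathrm{Aut}(C_p\times C_p)$. What your version buys is consistency with the rest of the exposition --- the same ``nontrivial homomorphism into an automorphism group gives a nonabelian semidirect product'' template already used for cyclic numbers, and later generalized to $E\rtimes C_{p_2}$ in the nilpotent numbers section --- so the nonabelian conclusion comes for free from nontriviality of the action rather than from an ad hoc calculation.
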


\begin{proof}

Consider the set of $3 \times 3$ matrices with entries in $\mathbb{F}_{p}$ of the form 
$
\begin{pmatrix}

1 & a & b \\
0 & 1 & c \\
0 & 0 & 1 

\end{pmatrix}
$
, where $a$, $b$, and $c$ are arbitrary elements of $\mathbb{F}_{p}$.
It is not hard to show that this is a group of order $p^3$ under matrix multiplication.  On the other hand,
we have that 
$
\begin{pmatrix}

1 & 1 & 1 \\
0 & 1 & 1 \\
0 & 0 & 1

\end{pmatrix}
\begin{pmatrix}

1 & 1 & 0 \\
0 & 1 & 0 \\
0 & 0 & 1

\end{pmatrix}
=
\begin{pmatrix}

1 & 2 & 1 \\
0 & 1 & 1 \\
0 & 0 & 1

\end{pmatrix}
\neq
\begin{pmatrix}

1 & 2 & 2 \\
0 & 1 & 1 \\
0 & 0 & 1

\end{pmatrix}
=
\begin{pmatrix}

1 & 1 & 0 \\
0 & 1 & 0 \\
0 & 0 & 1

\end{pmatrix}
\begin{pmatrix}

1 & 1 & 1 \\
0 & 1 & 1 \\
0 & 0 & 1

\end{pmatrix}
$
, so this group is not abelian.

\end{proof}

\subsection{Burnside's Normal Complement Theorem} 

This section is dedicated to proving the following famous theorem \cite[p.203]{hall}:

\begin{theorem}[Burnside's Normal Complement Theorem] \label{thm:burn}

Let $G$ be a finite group, and let $P$ be a Sylow subgroup of $G$ such that 

$$ P \subseteq Z(N_G(P))$$.

Then there exists $H \triangleleft G$ such that $|G : H| = |P|$.

\end{theorem}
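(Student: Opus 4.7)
The plan is to construct a surjective homomorphism $V \colon G \to P$ via the \emph{transfer} construction, and take $H := \ker V$; this $H$ will then be normal in $G$ with $|G : H| = |P|$.

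I would first record two easy consequences of the hypothesis $P \subseteq Z(N_G(P))$. Since $P \subseteq N_G(P)$, all elements of $P$ pairwise commute (they jointly lie in an abelian subgroup of $N_G(P)$), so $P$ is abelian. Moreover every element of $N_G(P)$ centralizes $P$, whence $N_G(P) = C_G(P)$. The key structural step is then a \emph{fusion lemma}: two elements of $P$ that are $G$-conjugate must already be equal. To prove it I would write $y = gxg^{-1}$ with $x, y \in P$; since $P$ is abelian we have $P \subseteq C_G(y)$ and $gPg^{-1} \subseteq gC_G(x)g^{-1} = C_G(y)$, so both are Sylow $p$-subgroups of $C_G(y)$. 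Applying Sylow's theorem (Theorem~\ref{thm:syl}) inside $C_G(y)$ produces $c \in C_G(y)$ with $cgPg^{-1}c^{-1} = P$, so $n := cg \in N_G(P)$ and $nxn^{-1} = cyc^{-1} = y$. Since $n \in N_G(P) = C_G(P)$, conjugation by $n$ fixes $x$, forcing $x = y$.

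Next I would introduce the transfer homomorphism $V \colon G \to P$: fixing a set of coset representatives $g_1, \dots, g_m$ of $P$ in $G$, and for each $g \in G$ decomposing the permutation of cosets induced by $g$ into cycles, one defines $V(g)$ as a product over cycles of elements of the form $g_{i_k} g^{n_k} g_{i_k}^{-1} \in P$, where the $n_k$ are cycle lengths summing to $|G : P|$. After the standard verification that $V$ is a well-defined homomorphism (with $P$ abelian, the order of factors is immaterial), restricting to $x \in P$ lets the fusion lemma replace each factor $g_{i_k} x^{n_k} g_{i_k}^{-1}$ by $x^{n_k}$, yielding $V(x) = x^{|G:P|}$. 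Because $(|P|, |G : P|) = 1$, this power map is an automorphism of the abelian group $P$; hence $V|_P$ is surjective, $V$ is surjective, and $H := \ker V$ is a normal subgroup of $G$ with $G/H \cong P$.

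The main obstacle will be the rigorous development of the transfer homomorphism, which has not been introduced earlier in the paper and requires some combinatorial bookkeeping to confirm that $V$ is well defined (independent of the choice of coset representatives) and multiplicative. By contrast, the fusion lemma and the final power-map computation, while the conceptual heart of the argument, are brief once the transfer machinery is in place.
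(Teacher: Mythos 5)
Your proposal is correct and follows essentially the same route as the paper: the transfer homomorphism into the abelian group $P$, evaluated on $u \in P$ via the cycle decomposition of coset representatives, with the hypothesis $P \subseteq Z(N_G(P))$ used to collapse each factor $g_{i_k}u^{n_k}g_{i_k}^{-1}$ to $u^{n_k}$, yielding $V(u) = u^{|G:P|}$ and surjectivity by coprimality. The only cosmetic difference is that you prove the fusion statement directly for single elements of $P$ (via Sylow conjugacy inside $C_G(y)$), whereas the paper proves a lemma about normal complexes of $P$ conjugate in $G$ being conjugate in $N_G(P)$ and then specializes it to one-element complexes --- the underlying mechanism is identical.
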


To prove Burnside's Normal Complement Theorem, we first establish the concept of the \emph{group transfer}
\cite[p.200]{hall}.
Recall that $H'$ denotes the commutator subgroup of $H$.

\begin{definition}

Let $G$ be a group, and let $H$ be a (not necessarily normal) subgroup of $G$ with finite index $[G:H] = n$.  
Let $X = \{x_1,x_2,\dots,x_n\}$ be a specific choice of representatives of the right cosets $Hx$ of $H \backslash G$, and define a 
mapping $\phi: G \rightarrow X$ as $\phi(g) = x_i$ if $g \in Hx_i$.  Then the \emph{transfer of 
$G$ into $H$} is the mapping $V_{G \rightarrow H}: G \rightarrow H/H'$ defined by
$$V_{G \rightarrow H}(g) = \prod_{i=1}^n x_ig\phi(x_ig)^{-1} \text{ mod } H'.$$    

\end{definition}

Note that here ``$\alpha$ mod $H'$" means a coset of $H/H'$ containing $\alpha$, and
$\alpha \cong \beta$ mod $H'$ means that $\alpha\beta^{-1} \in H'$.

The motivation for the group transfer comes from the theory of monomial permutations
and monomial representations of groups, as the elements $x_ig\phi(x_ig)^{-1}$ in the product
mimic elements of $H$ in a transitive monomial representation of $G$ with multipliers in $H$.
More on monomial representations can be found in \cite[p.200]{hall}.

We prove essential properties of the group transfer:

\begin{theorem} \label{thm:trans-prop}

\begin{enumerate}
\item $V_{G \rightarrow H}$ is a homomorphism of $G$ into $H/H'$.
\item The transfer $V_{G \rightarrow H}$ is independent of the choice 
$\{x_1,x_2,\dots,x_n\}$ of coset representatives, that is, the value of
$V_{G \rightarrow H}(g)$ is always the same regardless of the choice of $x_i$s.

\end{enumerate}

\end{theorem}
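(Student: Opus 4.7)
The plan is to prove the two parts in the order given, with a preliminary observation that makes the definition sensible. First I would observe that for every $g\in G$ and every $i$, the element $x_i g \phi(x_ig)^{-1}$ lies in $H$: by construction $x_ig$ and $\phi(x_ig)$ are in the same right coset of $H$, so their product (in that order) is in $H$. Thus the product defining $V_{G\to H}(g)$ is a product of elements of $H$, and its image in the abelian group $H/H'$ is well-defined.

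For part (1), the homomorphism property, I would compute $V_{G\to H}(g_1g_2)$ by inserting the identity $\phi(x_ig_1)^{-1}\phi(x_ig_1)$ in the middle of the $i$th factor:
\[
x_ig_1g_2\,\phi(x_ig_1g_2)^{-1}
= \bigl(x_ig_1\phi(x_ig_1)^{-1}\bigr)\bigl(\phi(x_ig_1)g_2\,\phi(x_ig_1g_2)^{-1}\bigr).
\]
The key point is that $\phi(x_ig_1g_2) = \phi\bigl(\phi(x_ig_1)g_2\bigr)$, since both are the chosen representative of the coset $Hx_ig_1g_2$. Then, working in $H/H'$ (where all factors commute), I would split the product over $i$ into two separate products. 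The first yields $V_{G\to H}(g_1)$ directly. For the second, I would use the fact that right multiplication by $g_1$ permutes the cosets of $H$, so $i\mapsto \phi(x_ig_1)$ is a bijection of $\{x_1,\dots,x_n\}$, and reindexing gives $V_{G\to H}(g_2)$.

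For part (2), independence of representatives, I would take a second set of representatives $\{y_1,\dots,y_n\}$ with $y_i = h_ix_i$ for some $h_i \in H$, let $\psi$ be the associated map, and denote the permutation $\pi$ of $\{1,\dots,n\}$ defined by $x_ig\in Hx_{\pi(i)}$. A direct computation shows
\[
y_ig\,\psi(y_ig)^{-1} = h_i\bigl(x_ig\,\phi(x_ig)^{-1}\bigr)h_{\pi(i)}^{-1}.
\]
Taking the product over $i$ and projecting to $H/H'$, the $h_i$ factors all commute with everything, and since $\pi$ is a permutation, $\prod_i h_{\pi(i)} = \prod_i h_i$ in $H/H'$, so the $h$'s cancel and only the original $\prod_i x_ig\phi(x_ig)^{-1}$ survives.

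The main obstacle in both parts is purely bookkeeping: carefully tracking the coset-representative permutation and showing that rearrangements are legitimate once we pass to $H/H'$. The critical conceptual ingredient in each part is the same, namely that reducing modulo the commutator $H'$ makes the target abelian, which lets us commute and reindex freely. Once this is recognized, both parts reduce to short algebraic manipulations together with the elementary observation that right multiplication by any fixed element of $G$ permutes the right cosets of $H$.
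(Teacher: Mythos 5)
Your proposal is correct and follows essentially the same route as the paper: the identity $\phi(x_ig_1g_2)=\phi(\phi(x_ig_1)g_2)$ and the factorization $x_ig_1g_2\phi(x_ig_1g_2)^{-1}=(x_ig_1\phi(x_ig_1)^{-1})(\phi(x_ig_1)g_2\phi(x_ig_1g_2)^{-1})$ are exactly the paper's decomposition $x_ig_1=h_{ij}x_j$, $x_jg_2=h_{js}x_s$ written with $\phi$ instead of named elements, and your part (2) cancellation of the $h_i$'s via the permutation $\pi$ in $H/H'$ matches the paper's cancellation of $\prod a_i\prod a_j^{-1}$ verbatim. No gaps.
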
       

\begin{proof}

Throughout this proof, we will use the fact that elements of $H/H'$ commute with each other,
since $H/H'$ is an abelian group known as the abelianization of $H$.

(1) Let $g_1$ and $g_2$ be elements of the group $G$, and fix a set of representatives 
$\{x_1,x_2,\dots,x_n\}$ of the right cosets in $H \backslash G$.  For each $x_i$ with $1 \leq i \leq n$, we find the corresponding
$x_j$ such that $x_ig_1 \in Hx_j$, and pick $h_{ij} \in H$ such that $x_ig_1 = h_{ij}x_j$.  Note that
every $Hx_j$ for $1 \leq j \leq n$ is attained by $x_ig_1$ for some $i$, since otherwise there would exist
$x_{k_1} \neq x_{k_2}$ such that $x_{k_1}g_1 \cong x_{k_2}g_1 \text{ mod } H$, or $x_{k_1} \cong x_{k_2} 
\text{ mod } H$, clearly impossible.
 
Now, using the above set of $x_j$s with $1 \leq j \leq n$, we can pick $h_{js} \in H$ such that 
$x_jg_2 = h_{js}x_s$ for some representative $x_s$.  Now,  
$$V_{G \rightarrow H}(g_1g_2) = \prod_{i=1}^n x_ig_1g_2\phi(x_ig_1g_2)^{-1}  \text { mod } H' \cong $$
$$\prod_{i=1}^n h_{ij}h_{js}x_s\phi(h_{ij}h_{js}x_s)^{-1} \text{ mod } H' \cong 
\prod_{i=1}^n  h_{ij}h_{js} \text{ mod } H',$$
while 
$$V_{G \rightarrow H}(g_1)V_{G \rightarrow H}(g_2) = 
\prod_{i=1}^n x_ig_1\phi(x_ig_1)^{-1} \prod_{j=1}^n x_jg_2\phi(x_jg_2)^{-1} \text{ mod } H' \cong $$
$$\prod_{i=1}^n h_{ij}x_j\phi(h_{ij}x_j)^{-1} \prod_{j=1}^n h_{js}x_s\phi(h_{js}x_s)^{-1} \text{ mod } H' \cong $$
$$\prod_{i=1}^n h_{ij} \prod_{j=1}^n h_{js} \text{ mod } H' \cong 
\prod_{i=1}^n h_{ij}h_{js} \text { mod } H',$$ where the last equality
follows from $H/H'$ being abelian.  Note that the product of the $h_{ij}h_{js}$ in terms of only
$i$ is well-defined, since each $h_{js}$ depends only on the 
fixed $g_2$ and on the index $j$, which in turn depends
only on $i$ and the fixed $g_1$.

(2)  Let us fix $g \in G$, and take two different sets of representatives for the right 
cosets of $H$ as $\{x_1,x_2,\dots,x_n\}$ and $\{y_1,y_2,\dots,y_n\}$.  For each 
$1 \leq i \leq n$, let $a_i \in H$ be the element such that $y_i = a_ix_i$.  Now, for each 
$x_i$, suppose that $x_ig = h_{ij}x_j$ as above.  Then $y_ig = a_ix_ig = 
a_ih_{ij}x_j = a_ih_{ij}a_j^{-1}y_j$.

We evaluate the transfer using $\{x_1,x_2,\dots,x_n\}$ as representatives:
$$V_{G \rightarrow H}(g) = \prod_{i=1}^n x_ig\phi(x_ig)^{-1} \text { mod } H' \cong $$
$$ \prod_{i=1}^n x_igx_j^{-1} \text{ mod } H' \cong \prod_{i=1}^n h_{ij} \text{ mod } H'.$$
Then we evaluate the transfer using $\{y_1,y_2,\dots,y_n\}$ as representatives:
$$V_{G \rightarrow H}(g) = \prod_{i=1}^n y_ig\phi(y_ig)^{-1} \text{ mod } H' \cong $$
$$\prod_{i=1}^n y_igy_j^{-1} \text{ mod } H' \cong \prod_{i=1}^n a_ih_{ij}a_j^{-1} \text{ mod } H' \cong $$
$$\prod_{i=1}^n a_i \prod_{j=1}^n a_j^{-1} \prod_{i=1}^n h_{ij} \text{ mod } H' \cong
\prod_{i=1}^n h_{ij} \text{ mod } H',$$
since the set of $a_j$s is just a reordering of the set of $a_i$s.

\end{proof}

We will need one further definition and lemma:

\begin{definition}
A \emph{complex} is an arbitrary set of elements of a group $G$ that need not form a subgroup.  However, as a 
set it can still be normal in $G$ or have conjugates in $G$.
\end{definition}
 
\begin{lemma}

Let $G$ be a group with a Sylow subgroup $P$, and let $K_1$ and $K_2$ be complexes of $P$.  Suppose
that $K_1$ and $K_2$ are both normal in $P$, and are conjugate to each other in $G$.  Then they are
also conjugate to each other in $N_G(P)$.

\end{lemma}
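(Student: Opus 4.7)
The plan is to set up a Sylow-style argument inside the normalizer of $K_2$ (or equivalently $K_1$), then read off the desired conjugator.

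First, I would observe that since $K_2$ is normal in $P$ (as a complex), we have $P \subseteq N_G(K_2)$. Next, let $g \in G$ be such that $gK_1g^{-1} = K_2$. Since $K_1$ is normal in $P$, the conjugate $gPg^{-1}$ normalizes $gK_1g^{-1} = K_2$, so $gPg^{-1} \subseteq N_G(K_2)$ as well. Thus $N_G(K_2)$ contains two subgroups $P$ and $gPg^{-1}$ that are both Sylow $p$-subgroups of $G$.

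The key step is that both $P$ and $gPg^{-1}$ are in fact Sylow $p$-subgroups of $N_G(K_2)$: they are $p$-subgroups of $N_G(K_2)$ of the maximum possible $p$-power order (namely the full $p$-part of $|G|$), hence of the maximum $p$-power order inside $N_G(K_2)$. Applying Sylow's theorem (Theorem \ref{thm:syl}, part 2) inside $N_G(K_2)$ produces an element $h \in N_G(K_2)$ with $h(gPg^{-1})h^{-1} = P$, i.e.\ $hg \in N_G(P)$.

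Setting $n = hg$, we then compute $nK_1n^{-1} = h(gK_1g^{-1})h^{-1} = hK_2h^{-1} = K_2$, where the last equality uses $h \in N_G(K_2)$. Thus $n \in N_G(P)$ conjugates $K_1$ to $K_2$, as required. I do not expect a real obstacle here; the only conceptual move is to switch viewpoint from $G$ to $N_G(K_2)$, after which the result is immediate from Sylow's theorem.
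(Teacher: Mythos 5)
Your proof is correct and follows essentially the same route as the paper: conjugate $P$ to get two Sylow subgroups of $N_G(K_2)$, apply Sylow's theorem inside $N_G(K_2)$ to find $h$, and check that $hg$ normalizes $P$ and carries $K_1$ to $K_2$. Your $g$ and $h$ are exactly the paper's $x$ and $y$.
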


\begin{proof}

By conjugation, we can find $x \in G$ such that $xK_1x^{-1} = K_2$.  Since $K_1 \triangleleft P$
we must have $xK_1x^{-1} \triangleleft xPx^{-1}$, so $K_2 \triangleleft Q$ for the 
Sylow subgroup $Q = xPx^{-1}$ of $G$.  Now $P$ and $Q$ are Sylow subgroups contained in
$N_G(K_2)$, so are conjugate in $N_G(K_2)$.  Therefore, there exists some $y$ with 
$yK_2y^{-1} = K_2$ that also satisfies $yQy^{-1} = P$.  Now, the element $yx \in N_G(P)$ since
$yxP(yx)^{-1} = yxPx^{-1}y^{-1} = yQy^{-1} = P$.  Furthermore, 
$yxK_1(yx)^{-1} = yxK_1x^{-1}y^{-1} = yK_2y^{-1} = K_2$, so $K_1$ is conjugate to $K_2$
by an element of $N_G(P)$ as desired.

\end{proof}

Now the proof of Burnside's Complement Theorem: 

\begin{proof}

To show the existence of a normal complement group $H$, it suffices to find an 
onto homomorphism $\phi: G \rightarrow P$, for then we can take $ker \phi = H$.  
If $P \subseteq Z(N_G(P))$, then $P \subseteq Z(P)$ and $P$ is abelian, so $P' = \{e\}$.  Thus, the
the transfer $V_{G \rightarrow P}: G \rightarrow P/P'$ may be treated as a homomorphism
$V_{G \rightarrow P}: G \rightarrow P$.  We demonstrate that the transfer is onto by showing that it 
maps $P$ onto itself isomorphically, i.e. if we restrict the transfer to a homomorphism 
$P \rightarrow P$, it is in fact an isomorphism.  Thus, we will only evaluate the transfer on elements of
$P$.    
 
Let $|G : P| = n$, and let $X = \{x_1,x_2,\dots,x_n\} $ be some choice of representatives of the right cosets
of $P \backslash G$.  Since the transfer is independent of the choice of representatives, 
for any particular $u \in P$, we will construct a particular set $S_u$ of representatives as follows: 
Pick some $x_i \in X$, and construct the elements $x_i,x_iu, \dots, x_iu^{r_i-1}$ 
until reaching the least $r_i$ such that $x_iu^{r_i} \in Px_i$.  Add the elements $x_i, x_iu, \dots, x_iu^{r_i-1}$ to $S_u$.  
If this does not exhaust all right cosets of $P \backslash G$, pick a representative $x_j \in X$ whose coset is not already represented in 
$S_u$, and repeat.  The final set $S_u$ of representatives for $u$ contains elements 
forming disjoint cycles across the right cosets $Px$.  Note that the set $S_u$ is closed under right multiplication by $u$, so the homomorphism $\phi$ will be easy to evaluate.  

Now, using the set $S_u = \{s_1,s_2,\dots,s_n\}$ as representatives, $V_{G \rightarrow P}(u) = \prod_{i=1}^n s_iu\phi(s_iu)^{-1}$ (we exclude the mod 
$P'$ since $P' = \{e\}$).  Let us look at a subset of $S_u$ containing one of the disjoint cycles 
$x_i,x_iu, \dots, x_iu^{r_i-1}$, where $x_iu^{r_i} \in Px_i$, and $x_iu^j \not\in Px_i$ for $0 \leq j < r_i$.  
Then for any $0 \leq k < r_i-1$, the corresponding factor in the transfer is 
$(x_iu^k)u\phi(x_iu^{k+1})^{-1} = x_iu^{k+1}(x_iu^{k+1})^{-1} = e$, while for $k = r_i-1$, the factor is
$(x_iu^{r_i-1})u\phi(x_iu^{r_i})^{-1} = x_iu^{r_i}x_i^{-1}$, so each disjoint cycle of length $r_i$ in our set $S_u$ of representatives 
contributes only a factor of $x_iu^{r_i}x_i^{-1}$ and the rest are identity elements.  Now, the transfer can be simplified 
to $V_{G \rightarrow P}(u) = \prod_i x_iu^{r_i}x_i^{-1}$, where the product ranges across the disjoint cycles of $S_u$.

Now we will apply the above lemma.  We consider the one-element complex $K_1 = \{x_iu^{r_i}x_i^{-1}\}$.  By construction 
$x_iu^{r_i} \in Px_i$, so $x_iu^{r_i}x_i^{-1} \in P$.  This complex is conjugate in $G$ to the complex $K_2 = \{u^r\}$, which
also clearly lies in $P$.  Furthermore, since $P$ is abelian, both complexes are normal in $P$.  By the above lemma, there 
exists $y \in N_G(P)$ such that $x_iu^rx_i^{-1} = yu^ry^{-1} = u^r$ since $P \subseteq Z(N_G(P))$.  

Now the transfer at $u$ is
$V_{G \rightarrow P}(u) = \prod_i u^{r_i} = u^n$, since the sum of the lengths of the disjoint cycles of $S_u$ is just the 
size of $S_u$, which is also $|G : P|$.  This applies to all $u \in P$, so the restriction of $V_{G \rightarrow P}$ to 
$P$ is equivalent to the homomorphism $\phi: P \rightarrow P$ given by $\phi(u) = u^n$.  

Now, since $P$ is a Sylow $p$-subgroup of $G$ for some prime $p$, the index $n$ is relatively prime to $p$.  Thus
$u^n \neq e$ unless $u = e$, so $\phi$ is injective.  Since $\phi$ is an injective homomorphism between sets of 
equal finite cardinality, it must also be an isomorphism.  Now the transfer $V_{G \rightarrow P}$ is necessarily 
a surjective homomorphism of $G$ onto $P$, so its kernel is a normal subgroup of $G$ of order $|G : P|$.
      
\end{proof}

\subsection{Determining the Abelian Numbers}

\begin{theorem}

A positive integer $n$ is an abelian number $\iff$ 
$$ n = p_1^{a_1} \dots p_r^{a_r}, \text{ where each }
a_i \leq 2, \text{ and  } (p_i,p_j^{a_j} - 1) = 1 \text{ when } i \neq j.$$

\end{theorem}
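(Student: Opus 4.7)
The plan is to prove each direction separately. For necessity I would argue by contrapositive, exhibiting an explicit nonabelian group of order $n$ whenever the stated conditions fail. If some $a_i \geq 3$, Lemma \ref{lem:prime-cubed} produces a nonabelian group $N$ of order $p_i^3$, so $N \times C_{n/p_i^3}$ is nonabelian of order $n$. If instead $p_i \mid p_j^{a_j} - 1$ for some distinct $i, j$, take the elementary abelian group $V = (\mathbb{Z}/p_j\mathbb{Z})^{a_j}$. Its automorphism group $GL_{a_j}(\mathbb{F}_{p_j})$ contains a cyclic subgroup of order $p_j^{a_j} - 1$ (coming from the multiplicative action of $\mathbb{F}_{p_j^{a_j}}^{\times}$ on $\mathbb{F}_{p_j^{a_j}}$ viewed as an $\mathbb{F}_{p_j}$-vector space), and hence an element of order $p_i$. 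This yields a nontrivial semidirect product $V \rtimes C_{p_i}$, and taking the direct product with $C_{n / (p_j^{a_j} p_i)}$ gives a nonabelian group of order $n$.

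For sufficiency I would induct on $n$, the base case $n = 1$ being trivial. Let $G$ have order $n$, fix any prime $p = p_j$ dividing $n$, and let $P$ be a Sylow $p$-subgroup. Since $|P| = p^{a_j}$ with $a_j \leq 2$, $P$ is abelian, so $P \subseteq C_G(P)$, and $|N_G(P) / C_G(P)|$ divides both $n/|P|$ (hence is coprime to $p$) and $|Aut(P)|$. A short case analysis on whether $P$ is $C_p$, $C_{p^2}$, or $C_p \times C_p$ shows that every prime factor of $|Aut(P)|$ other than $p$ divides $p^{a_j} - 1$. The hypothesis $(p_i, p_j^{a_j} - 1) = 1$ for $i \neq j$ then rules out any nontrivial prime in $|N_G(P)/C_G(P)|$, so $P \subseteq Z(N_G(P))$. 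Burnside's normal complement theorem (Theorem \ref{thm:burn}) then produces $H \triangleleft G$ with $|H| = n / p^{a_j}$.

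Since $H$ inherits the hypotheses (same exponents, fewer primes), induction gives that $H$ is abelian. Coprimality of $|H|$ and $|P|$ forces $H \cap P = \{e\}$ and $HP = G$, so $G = H \rtimes P$ via a conjugation action $\psi : P \to Aut(H)$. Writing $H = \prod_{k \neq j} H_{p_k}$ via the structure theorem for finite abelian groups, $Aut(H) = \prod_{k \neq j} Aut(H_{p_k})$. The prime factors of each $|Aut(H_{p_k})|$ are either $p_k$ itself or divisors of $p_k^{a_k} - 1$, and the hypothesis $(p_j, p_k^{a_k} - 1) = 1$ (together with $p_j \neq p_k$) rules out any prime dividing both $|Aut(H)|$ and $|P|$. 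Hence the image of $\psi$ is trivial, $G = H \times P$ is a direct product of abelian groups, and therefore abelian.

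The main obstacle is the verification that $P \subseteq Z(N_G(P))$: here the coprimality hypothesis must be matched carefully against the known orders of $Aut(P)$ for the three possible isomorphism types of $P$. Once Burnside's theorem is available, the rest is a clean induction whose final step merely invokes the symmetric coprimality condition a second time to collapse the semidirect product into a direct product.
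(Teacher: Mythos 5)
Your proof is correct, but the sufficiency direction takes a genuinely different route from the paper's. For necessity you do essentially what the paper does, except that you replace its two-case analysis (a semidirect product $C_{p_j}\rtimes C_{p_i}$ when $p_i\mid p_j-1$, and a separate count of $|\mathrm{Aut}(C_{p_j}\times C_{p_j})|$ when $p_i\mid p_j+1$) with a single Singer-cycle construction: a cyclic subgroup of order $p_j^{a_j}-1$ in $GL_{a_j}(\mathbb{F}_{p_j})$ handles both divisibility cases at once, which is a clean unification. For sufficiency the paper argues by minimal counterexample: it first proves $G$ is solvable, identifies the commutator subgroup $G'$ with a Sylow $q$-subgroup of itself, shows $Z(G)$ is nontrivial, deduces that $G$ is nilpotent, and finally writes $G$ as a direct product of abelian Sylow subgroups. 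You instead run a direct induction on $n$: the $N_G(P)/C_G(P)$ embedding into $\mathrm{Aut}(P)$ together with the coprimality hypothesis forces $P\subseteq Z(N_G(P))$, Burnside (Theorem \ref{thm:burn}) yields a normal $p$-complement $H$, induction makes $H$ abelian, and a second application of the same order-counting on $\mathrm{Aut}(H)=\prod_k\mathrm{Aut}(H_{p_k})$ kills the conjugation action of $P$ on $H$, collapsing $G=H\rtimes P$ to $H\times P$. Your version is shorter and avoids the solvability/nilpotency detour entirely, needing only Burnside's theorem, the $N/C$ lemma, and the orders of $\mathrm{Aut}(A)$ for abelian $A$ of order $p$ or $p^2$; the paper's version extracts more structure along the way (solvability, $G'$ being a Sylow subgroup, nilpotency) and mirrors the minimal-counterexample template it also uses for cyclic numbers. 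The only points worth making explicit in a final write-up are the standard facts that groups of order $p^2$ are abelian (needed for ``$P$ is abelian'') and that every prime divisor of $|\mathrm{Aut}(P)|$ other than $p$ divides $p^{a_j}-1$ in each of the three isomorphism types $C_p$, $C_{p^2}$, $C_p\times C_p$; both check out.
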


The following proof is adapted from \cite{haines2}.

\begin{proof}

We prove that the given condition is necessary.  Note that it is necessary for $a_i \leq 2$ for each exponent, because
we established in Lemma~\ref{lem:prime-cubed} that for every prime $p$ there are always nonabelian groups of order $p^3$.  We also have that 
$(p_i, p_j - 1) = 1$ for all $i \neq j$ since we established in the proof of cyclic numbers that if
$p_i \mid p_j - 1$, there is a nontrivial semidirect product of order $p_ip_j$, and nontrivial semidirect
products are never abelian.  

Let $p_k$ be a prime factor of $n$ with $a_k = 2$.  For any other prime $p_i$ that is a factor of $n$, we cannot have that 
$p_i \mid p_k + 1$.  If that were the case, then $p_i$ would divide the order of 
$Aut(C_{p_k} \times C_{p_k}) = p_k(p_k - 1)^2(p_k + 1)$, so there would again be a nonabelian group of order $p_ip_k^2$,
and so a nonabelian group of order $n$.  Since $(p_i, p_k - 1) = 1$ and $(p_i, p_k + 1) = 1$, we must have that
$(p_i, p_k^{a_k} - 1) = 1$ when $a_k = 2$.  Thus, any abelian number $n$ must satisfy the conditions of the theorem.

We now show that the condition is sufficient.  Let $n$ be the minimal positive integer
satisfying our conditions that is not an abelian number, and let $G$ be a nonabelian group of order $n$.  Any divisor
$d$ of $n$ must also satisfy our conditions, so all proper subgroups and quotient groups of $G$ must be abelian.

We prove that $G$ is solvable.  For any prime $p$ dividing $n$, take $P$ to be a Sylow $p$-subgroup of $G$.  
If $P \triangleleft G$ then both $P$ and $G/P$ are abelian, thus solvable, so $G$ is solvable.  If $P$ is not
normal in $G$, then its normalizer $N_G(P) \neq G$, so $N_G(P)$ must be abelian (as a group of order less than $n$), 
and clearly $P \subseteq Z(N_G(P))$.  By Theorem \ref{thm:burn}, $P$ must have a nontrivial 
normal complement $K$ in $G$.  But then $K$ and $G/K$ are both solvable, and so $G$ is solvable as desired.    

Consider the commutator subgroup $G'$ of $G$.  If $G' = G$, then $G$ would not be solvable, so $G' \neq G$.
Also, $G' \neq \{e\}$, for otherwise we would have $G$ abelian.  Thus, $G'$ is a proper, nontrivial subgroup of $G$.  
Take $Q$ to be a nontrivial Sylow $q$-subgroup of $G'$ for some prime $q$ dividing $n$.  Since $G'$ is abelian,
we must have $Q \text{ char } G'$ and of course $G' \triangleleft G$, so $Q \triangleleft G$.  Now, since $Q$ is taken to
be nontrivial, $G/Q$ is abelian, and so $Q$ must contain $G'$.  Since also $G'$ contains $Q$ by construction,
we must have that $Q = G'$. 

Now, let $Q_0$ be a Sylow $q$-subgroup of the entire group $G$ that contains $Q = G'$.  We may assume that
$Q_0 \neq G$; otherwise, $G$ is a group of order $q$ or $q^2$, so is certainly abelian.  Theorem \ref{thm:burn}
tells us that there exists some subgroup $K$ of $G$ satisfying $ord(K) = ord(G)/ord(Q_0)$.  We consider the 
conjugation action of $G$ on $Q$, restricted to the subgroup $K$.  Since $Q \triangleleft G$, this conjugation is a 
homomorphism $c: K \rightarrow Aut(Q)$.  However, because $n$ contains only prime factors with multiplicity of at most $2$,
$Q$ can only be isomorphic to $C_q$,  $C_{q^2}$, or $C_q \times C_q$ for some prime $q$, so $ord(Aut(Q))$ can only be 
$q - 1$, $q(q-1)$, or $q(q-1)^2(q+1)$ respectively.  However, if $q^2 \mid n$, it is clear that $ord(K)$ consists by definition only of 
primes that do not divide any of $q$, $q-1$, or $q+1$.  Otherwise, $Q$ is isomorphic to $C_q$, and still any prime in 
$ord(K)$ fails to divide $q-1$.  It follows that the conjugation action $c$ is trivial, and therefore the elements of $K$
commute with the elements of $Q$.

We see that $Q \subseteq Z(Q_0)$, since $Q_0$ is abelian and $Q \subseteq Q_0$.
Also, $K \cap Q_0 = \{e\}$, and $ord(K)ord(Q_0) = n$, so we must have that $KQ_0 = G$.  Therefore, $Q \subseteq Z(G)$, so
$Z(G)$ is nontrivial.  Also, $Z(G) \neq G$ since $G$ is not abelian.  However, these combined mean that $G/Z(G)$ is abelian
by assumption, so $G$ is nilpotent.  This means that $G$ is a direct product of its Sylow subgroups (see Section~\ref{sec:nilpotent}).  However,
every Sylow subgroup of $G$ has order that is either prime or the square of a prime, so they are all cyclic groups
or a direct product of cyclic groups.  Thus, if $G$ is a direct product of its Sylow subgroups, it is a direct product
of cyclic groups, and so is abelian, a contradiction. 

\end{proof}

One nice corollary of knowing whether a given $n$ is an abelian number is that we immediately know how many groups
there are of order $n$ based only on its prime factorization.  If $n = p_1^{a_1} \dots p_r^{a_r}$ is abelian, then the 
number of groups of order $n$ is 
$$ \prod_{i=1}^r 2^{a_i - 1} $$
This is because all finite abelian groups are a direct product of cyclic groups, and we therefore get that any group of order $n$
would be a direct product across all primes $p_i$ of exactly $C_{p_i}$ if $a_i = 1$, or of a choice of
$C_{p_i} \times C_{p_i}$ and $C_{p_i^2}$ if $a_i = 2$.   

\section{Nilpotent Numbers} \label{sec:nilpotent}

In this section, we will establish some properties of finite nilpotent groups, use some of these properties to
determine the set of nilpotent numbers, and then show how the sets of abelian and cyclic numbers can be determined
as a corollary of the theorem on nilpotent numbers, providing an alternate proof to those found
in previous sections.

\subsection{Properties of Nilpotent Groups}

\begin{definition}

For any group $G$ (not necessarily finite in this case), let the \emph{upper central series} $Z_0(G) \subseteq Z_1(G) 
\subseteq Z_2(G) \subseteq \dots $ of $G$ be given recursively by
$$ Z_0(G) = \{e\}, Z_1(G) = Z(G), \text{and } Z_{i+1}(G)/Z_i(G) = Z(G/Z_i(G)) $$

The group $G$ is called \emph{nilpotent} if $Z_c(G) = G$ for some term in the upper central series, and its 
\emph{nilpotency class} is the index $c$ first giving equality.

\end{definition}

Note that in particular, a group with nilpotency class 1 is abelian.

Since we are dealing with only finite groups, we establish some equivalent conditions to nilpotency in these groups.

\begin{theorem} \label{thm:nilpotent-prop}

Let $G$ be a finite group, whose order is divided by distinct primes $p_1, p_2, \dots p_r$, and containing
Sylow subgroups $P_1, P_2, \dots P_r$ corresponding to those primes.  Then the following conditions are equivalent:

\begin{enumerate}
\item $G$ is nilpotent
\item Every proper subgroup of $G$ is a proper subgroup of its normalizer, i.e. if $H \subsetneq G$, then 
$H \subsetneq N_G(H)$
\item Every Sylow subgroup is normal in $G$, i.e. $P_i \triangleleft G$ for each $1 \leq i \leq r$.
\item $G$ is the direct product of its Sylow subgroups, i.e. $G = P_1 \times P_2 \times \dots \times P_r$
\end{enumerate} 

\end{theorem}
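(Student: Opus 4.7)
The plan is to prove the cycle of implications $(1) \Rightarrow (2) \Rightarrow (3) \Rightarrow (4) \Rightarrow (1)$, which is the standard way to avoid redundancy.

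For $(1) \Rightarrow (2)$, I would use the upper central series directly. Given a proper subgroup $H \subsetneq G$, note that $Z_0(G) = \{e\} \subseteq H$ while $Z_c(G) = G \not\subseteq H$, so there is a smallest index $i \geq 1$ with $Z_i(G) \not\subseteq H$. Picking any $x \in Z_i(G) \setminus H$, the definition of the upper central series gives $[x,h] \in Z_{i-1}(G) \subseteq H$ for every $h \in H$, so $xhx^{-1} \in H$, placing $x$ in $N_G(H) \setminus H$.

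For $(2) \Rightarrow (3)$, let $P$ be a Sylow $p$-subgroup and set $N = N_G(P)$. The key fact is that $N$ is self-normalizing: if $g \in N_G(N)$, then $gPg^{-1} \subseteq N$ is another Sylow $p$-subgroup of $N$, but $P \triangleleft N$ and so by Corollary \ref{cor:syl} it is the unique Sylow $p$-subgroup of $N$, forcing $gPg^{-1} = P$ and hence $g \in N$. Now $(2)$ prohibits a proper self-normalizing subgroup, so $N = G$ and $P$ is normal in $G$. For $(3) \Rightarrow (4)$, I would argue by induction on $r$ that the product $P_1 P_2 \cdots P_r$ is an internal direct product: each $P_i$ is normal, and by Lagrange the pairwise intersections $P_i \cap (P_1 \cdots P_{i-1})$ have coprime order to their two factors, hence are trivial. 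A product-of-orders count then forces $G = P_1 \times \cdots \times P_r$.

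For $(4) \Rightarrow (1)$, I would first prove that any finite $p$-group $P$ is nilpotent, by induction on $|P|$: the class equation forces $Z(P)$ to be nontrivial, so $P/Z(P)$ is a strictly smaller $p$-group and by induction has an upper central series terminating at the whole group, which then lifts to show $Z_c(P) = P$ for some $c$. Then I would observe that the upper central series of a direct product is the term-by-term direct product of the upper central series of the factors, so a direct product of nilpotent groups is nilpotent.

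The main obstacle, in my view, is the self-normalizing step in $(2) \Rightarrow (3)$, since it requires both the Sylow conjugacy theorem inside $N_G(P)$ and the uniqueness criterion from Corollary \ref{cor:syl}; the rest is mostly bookkeeping once one has Cauchy's theorem, the class equation, and the behavior of upper central series under quotients and products.
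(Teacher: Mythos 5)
Your proposal is correct and follows essentially the same route as the paper: the same cycle of implications, the same self-normalizing-normalizer argument for $(2) \Rightarrow (3)$ resting on Corollary~\ref{cor:syl}, the same induction with coprime orders and Lagrange for $(3) \Rightarrow (4)$, and the same nontrivial-center argument for $(4) \Rightarrow (1)$. The only local difference is in $(1) \Rightarrow (2)$, where you take the least $i$ with $Z_i(G) \not\subseteq H$ and exhibit an explicit element of $N_G(H) \setminus H$ via $[x,h] \in Z_{i-1}(G) \subseteq H$, whereas the paper inducts on the nilpotency class by passing to $G/Z(G)$; both are standard and yours is, if anything, slightly more direct.
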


This proof is adapted from \cite[p. 191]{dummit}.

\begin{proof}

We prove that $1 \implies 2$ by induction on the nilpotency class of $G$.  If $G$ has nilpotency class 1, then 
it is abelian, and then every subgroup of $G$ is normal, and the result holds.  Now, suppose that the claim holds for 
all groups of nilpotency class $c$, and consider a group $G$ of nilpotency class $c+1$.  Let $H$ be any subgroup
of $G$.  We certainly must have that $Z(G) \subseteq N_G(H)$.  Therefore, if $Z(G)$ is not contained in $H$, 
then $H \subsetneq N_G(H)$.  Thus, we may suppose that $Z(G) \subseteq H$.  Now, we pass to the quotient 
$G/Z(G)$, which is nilpotent with class $c$.  We have that $H/Z(G)$ is a proper subgroup of $G/Z(G)$, so by the 
inductive hypothesis, $H/Z(G) \neq N_{G/Z(G)}(H/Z(G)) = N_G(H)/Z(G)$, so indeed $H \neq N_G(H)$, and the
induction is complete.
	
To prove that $2 \implies 3$, let $P$ be any Sylow subgroup of $G$, and let $N = N_G(P)$ be its normalizer in $G$.
By definition $P \triangleleft N$, and since $P$ is a Sylow subgroup, this says that $P$ is characteristic in $N$, so since
$N \triangleleft N_G(N)$, we have $P \triangleleft N_G(N)$.  Thus, 
$N_G(N)$ is contained in the normalizer of $P$, so $N_G(N) \subseteq N$, so obviously $N_G(N) = N$.  However, 
from $(2)$ we know that no proper subgroup of $G$ is its own normalizer, so we can only have $N = G$, which 
implies that $P \triangleleft G$ as desired.

To prove that $3 \implies 4$, we prove inductively that 
$P_1P_2 \dots P_t \cong P_1 \times P_2 \times \dots \times P_t$ for any $1 \leq t \leq r$.  The base case is
trivial.  Suppose that $P_1P_2 \dots P_k \cong P_1 \times P_2 \times \dots \times P_k$
for some $k < r$.  Certainly $P_1P_2 \dots P_{k+1}$ is a subgroup, as each $P_i$ is normal in $G$. Let 
$H = P_1P_2 \dots P_k$ and let $K$ = $P_{k+1}$.  By induction, $H \cong P_1 \times P_2 \times \dots \times P_k$.
Clearly the orders of $H$ and $K$ are relatively prime, and Lagrange's Theorem implies that $H \cap K = \{e\}$,
so the result now follows.

To prove that $4 \implies 1$, proceed by induction on the order of $G$.  The base case is trivial, so let $G$ 
be a minimal counterexample satisfying $G \cong P_1 \times P_2 \times \dots \times P_r$ that is not nilpotent.
We have that 
$$Z(G) = Z(P_1 \times P_2 \times \dots \times P_r) \cong Z(P_1) \times Z(P_2) \times \dots \times Z(P_r)$$
and so 
$$ G/Z(G) = P_1/Z(P_1) \times P_2/Z(P_2) \times \dots \times P_r/Z(P_r).$$
Since no $p$-group has trivial center, $Z(G)$ is nontrivial, and we may apply the inductive hypothesis to 
conclude that $G/Z(G)$ is nilpotent, from which it follows that $G$ is nilpotent.

\end{proof}

\subsection{Nilpotent Numbers}

Before determining the nilpotent numbers, we will need a theorem of O.J. Schmidt's regarding a certain type 
of finite group \cite[p. 258]{robinson}:

\begin{theorem}

Let $G$ be a finite group that is not nilpotent, but suppose that every proper maximal subgroup of $G$ is nilpotent.  Then the
order of $G$ is divisible by exactly two distinct primes, i.e. $ord(G) = p^mq^n$ with $m,n > 0$
for distinct primes $p$ and $q$.

\end{theorem}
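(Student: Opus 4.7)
The plan is to argue by contradiction: assume $|G|$ has three or more distinct prime divisors and derive that $G$ must itself be nilpotent, contradicting the hypothesis. The first observations are easy. Every proper subgroup $H$ of $G$ is nilpotent, since $H$ is contained in some maximal subgroup, which is nilpotent by hypothesis, and subgroups of nilpotent groups are nilpotent. Also, $|G|$ is not a prime power, since every $p$-group is nilpotent, so $|G|$ already has at least two distinct prime divisors and I wish to upgrade this to equality.

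The core of the argument, once $G$ is known to be solvable, proceeds as follows. For any pair of distinct primes $p, q$ dividing $|G|$, Theorem \ref{thm:hall} gives a Hall $\{p,q\}$-subgroup $H_{pq}$; under the contradictory assumption some third prime $r$ divides $|G|/|H_{pq}|$, so $H_{pq}$ is a proper subgroup, hence nilpotent. By Theorem \ref{thm:nilpotent-prop}, $H_{pq}$ decomposes as the internal direct product $P_0 \times Q_0$ of its Sylow $p$- and $q$-subgroups, and because $H_{pq}$ contains the full $p$- and $q$-parts of $|G|$, both $P_0$ and $Q_0$ are Sylow subgroups of $G$ that commute elementwise. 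Since all Sylow $p$-subgroups of $G$ are conjugate, for any Sylow $p$-subgroup $P$ there is $g \in G$ with $P = gP_0 g^{-1}$, and then $gQ_0 g^{-1}$ is a Sylow $q$-subgroup of $G$ commuting elementwise with $P$. Thus $C_G(P)$ contains a Sylow $q$-subgroup of $G$ for every prime $q \neq p$, so $|C_G(P)|$ is divisible by the full $p'$-part $|G|/|P|$; it follows that $|G : C_G(P)|$ is a power of $p$, and since $C_G(P) \subseteq N_G(P)$ the Sylow count $n_p = |G : N_G(P)|$ is also a power of $p$. Sylow's theorem then forces $n_p \equiv 1 \pmod{p}$, and the only power of $p$ congruent to $1$ modulo $p$ is $1$, so $P \triangleleft G$. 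Applying this to every prime divisor of $|G|$, every Sylow subgroup of $G$ is normal, and Theorem \ref{thm:nilpotent-prop} declares $G$ nilpotent, the desired contradiction.

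The principal obstacle is establishing that $G$ is solvable. The natural route is to rule out $G$ being simple and nonabelian, after which a proper nontrivial normal subgroup $N$ of $G$ exists and is nilpotent (being a proper subgroup); the quotient $G/N$ then has only nilpotent proper subgroups by the correspondence theorem, which opens an inductive descent on $|G|$ whose conclusion combined with Lemma \ref{lem:solv} yields solvability of $G$. To exclude the simple case, if some Sylow subgroup $P$ of $G$ is abelian, then $N_G(P)$ is proper and nilpotent with $P$ sitting in its center (since in the nilpotent decomposition $N_G(P) = P \times K$ one has $Z(N_G(P)) = P \times Z(K) \supseteq P$), and Theorem \ref{thm:burn} produces a proper nontrivial normal subgroup of $G$, contradicting simplicity. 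The subcase where every Sylow subgroup of $G$ is nonabelian (so Burnside's complement theorem is unavailable directly) requires more delicate structural analysis, and is the trickiest remaining part of the argument.
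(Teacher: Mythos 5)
Your proposal is incomplete at its crux: the solvability of a simple group all of whose maximal subgroups are nilpotent. You correctly reduce to the simple case (the inductive descent through a proper nontrivial normal subgroup $N$, using Lemma~\ref{lem:solv}, matches the paper), and you dispose of the subcase where some Sylow subgroup is abelian via Theorem~\ref{thm:burn}. But the remaining subcase --- every Sylow subgroup nonabelian --- is exactly where the theorem's difficulty lives, and you explicitly leave it open. Burnside's normal complement theorem is not the right tool here, and no routine extension of it closes the gap.

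The paper fills this gap without Burnside at all, by a counting-and-maximal-intersection argument on the maximal subgroups themselves. If every two distinct maximal subgroups of the simple group $G$ meet trivially, then each maximal $M$ is self-normalizing (simplicity rules out $M \triangleleft G$), so $M$ has $|G|/|M|$ conjugates whose nonidentity elements are pairwise disjoint; summing $n - n/m_i$ over the conjugacy classes of maximal subgroups either falls short of or overshoots the $n-1$ nonidentity elements of $G$, a contradiction. Hence some two maximal subgroups $M_1, M_2$ meet nontrivially; choosing them so that $I = M_1 \cap M_2$ is as large as possible, nilpotency of $M_1$ gives $I \subsetneq N_{M_1}(I)$ by Theorem~\ref{thm:nilpotent-prop}(2), and since $N_G(I)$ lies in some maximal $M_0$, one gets $I \subsetneq N_{M_1}(I) \subseteq M_0 \cap M_1$, contradicting the maximality of $I$. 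This is the step you need to supply. By contrast, your second half --- once solvability is in hand, using Hall $\{p,q\}$-subgroups from Theorem~\ref{thm:hall} to show every Sylow subgroup is centralized by full Sylow subgroups for all other primes, forcing $n_p = 1$ --- is correct and is a legitimate alternative to the paper's route, which instead takes a maximal normal subgroup of prime index and pushes normality of the Sylow subgroups through its nilpotency; the two are of comparable length, and yours avoids the prime-index fact for maximal normal subgroups of solvable groups at the cost of invoking Hall's theorem.
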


\begin{proof}

We first prove that if a group is not nilpotent but has every proper maximal subgroup nilpotent, then it is solvable.  Suppose not, and take $G$ to be a minimal counterexample.
If $G$ has any proper nontrivial normal subgroup $N$, then $N$ is nilpotent, hence solvable.  Furthermore, $G/N$ is either nilpotent and thus solvable, or
still satisfies that all of its maximal subgroups are nilpotent, in which case it is solvable by the minimality of $G$ as a counterexample.
However, if both $N$ and $G/N$ are solvable, then so is $G$.  Therefore, we may assume that $G$ is simple.

Now we suppose that every pair of distinct maximal subgroups of $G$ intersect in only the identity.  Let $M$ 
be a maximal subgroup of $G$.  Because $G$ is simple, $M = N_G(M)$.  Let $n = ord(G)$, and let $m = ord(M)$.
Then $M$ has $n/m$ conjugates in $G$, all of which intersect pairwise in only the identity.  Therefore, the 
nonidentity elements of the conjugates of $M$ account for a total of $(m-1)n/m = n - n/m$ 
nonidentity elements of $G$.  Since $m \geq 2$, we must have $n - n/m \geq n/2 > (n-1)/2$.  Also, we certainly have
$n - n/m \leq n - 2 < n - 1$.  

By assumption, each nonidentity element of $G$ is contained in exactly one 
maximal subgroup of $G$.  We may thus count the nonidentity elements of $G$
by taking representatives $M_1, M_2, \dots, M_k$ of the $k$ distinct sets of conjugates of the maximal subgroups of $G$.
Let $ord(M_i) = m_i$.  Then
$$ n - 1 =  \sum\limits_{i=1}^k n - n/m_i$$
However, if $k = 1$, we cannot have all $n-1$ nonidentity elements accounted for, while if $k > 1$, the right side
of the equation is already larger than $n-1$, a contradiction.  

Thus, there is some pair of distinct maximal subgroups of $G$ that intersect nontrivially.  Choose maximal subgroups
$M_1$ and $M_2$ of $G$ so that their intersection $I$ is as large as possible.  Let $N$ denote $N_G(I)$.  $M_1$ is
nilpotent by assumption, so $I \neq N_{M_1}(I)$ (Theorem ~\ref{thm:nilpotent-prop}), meaning that $I \subsetneq N \cap M_1$.  We also already established
that $I$ cannot be normal in $G$, so its normalizer $N$ must be contained in some maximal subgroup $M_0$ of $G$.
We now have that $I \subsetneq N \cap M_1 \subseteq M_0 \cap M_1$, but this contradicts the maximality of $I$.
This last contradiction implies that $G$ is solvable.

Now, given that $G$ is solvable, we can prove that the order of $G$ contains exactly two distinct prime factors.
Let the order of $G$ have standard prime factorization $p_1^{a_1}p_2^{a_2} \dots p_r^{a_r}$, and proceed by contradiction
by assuming that $r \geq 3$.  Take $M$ to be a maximal normal subgroup of $G$.  It is well-known that a maximal
normal subgroup of a solvable group has prime index, so w.l.o.g. let $|G : M| = p_1$.  Let $P_i$ be a 
Sylow $p_i$-subgroup of $G$.  If $i > 1$, then $P_i \subseteq M$.  Since $M$ is nilpotent, we have
$P_i \text{ char } M$, and by assumption $M \triangleleft G$, so $P_i \triangleleft G$.  Now, for each $i > 1$, 
$P_1P_i$ is a subgroup of $G$, but it is not all of $G$ if $r \geq 3$.  Therefore, the group $P_1P_i$ is nilpotent, and the elements of $P_1$
and $P_i$ commute.  Since this holds for every $P_i$ with $i > 1$, we have that $N_G(P_1) = G$, so
$P_1 \triangleleft G$.  But now every Sylow subgroup of $G$ is normal in $G$, so $G$ is nilpotent, a contradiction.

\end{proof}

We also give the following characterization of nilpotent numbers its own name, so that it may
be referenced easily in the following subsection.

\begin{definition}

Let $n$ be a positive integer with standard prime factorization $p_1^{a_1}p_2^{a_2} \dots p_r^{a_r}$.  The integer $n$ is 
said to have \emph{nilpotent factorization} provided that $p_i \nmid p_j^k - 1$, for any $i \neq j$ and
any $1 \leq k \leq a_j$. 

\end{definition}

Now we are ready to prove the proper characterization of all nilpotent numbers.

\begin{theorem}

A positive integer $n$ is a nilpotent number $\iff$ $n$ has nilpotent factorization.

\end{theorem}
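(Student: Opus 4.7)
The plan is to prove both implications. For necessity I will argue the contrapositive, exhibiting an explicit non-nilpotent group of order $n$ whenever the nilpotent factorization condition fails. For sufficiency I will run a minimal counterexample argument, combined with Schmidt's theorem (established just above) and a short Sylow count.

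For necessity, suppose $p_i \mid p_j^k - 1$ for some $i \neq j$ with $1 \leq k \leq a_j$. Identifying $\mathbb{F}_{p_j^k}$ with the underlying $k$-dimensional $\mathbb{F}_{p_j}$-vector space of $(C_{p_j})^k$, any element $\alpha \in \mathbb{F}_{p_j^k}^*$ of order $p_i$ --- which exists because $\mathbb{F}_{p_j^k}^*$ is cyclic of order $p_j^k - 1$, a multiple of $p_i$ --- acts by multiplication as an $\mathbb{F}_{p_j}$-linear automorphism of $(C_{p_j})^k$ of order exactly $p_i$. This yields a nontrivial homomorphism $C_{p_i} \to Aut((C_{p_j})^k)$ and thus a nontrivial semidirect product $(C_{p_j})^k \rtimes C_{p_i}$, in which the complement $C_{p_i}$ is not normal; by Theorem \ref{thm:nilpotent-prop} this semidirect product is not nilpotent. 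Taking its direct product with $C_{n/(p_i p_j^k)}$ (note $p_i p_j^k \mid n$ because $k \leq a_j$) produces a group of order $n$ containing a non-nilpotent subgroup. Since subgroups of nilpotent groups are nilpotent (an easy consequence of Theorem \ref{thm:nilpotent-prop}, as $H \cap P_i$ is a Sylow $p_i$-subgroup of $H$ whenever $G$ is the direct product of its Sylow subgroups), this group of order $n$ fails to be nilpotent.

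For sufficiency, let $n$ be the smallest positive integer with nilpotent factorization that admits a non-nilpotent group $G$ of order $n$. The nilpotent factorization condition is inherited by divisors, so by minimality every proper subgroup of $G$ is nilpotent; in particular every maximal proper subgroup is nilpotent, so Schmidt's theorem forces $|G| = p^a q^b$ for exactly two distinct primes $p$ and $q$. Sylow's third theorem gives $n_p \mid q^b$ with $n_p \equiv 1 \pmod p$; if $n_p > 1$ then $n_p = q^k$ for some $1 \leq k \leq b$, whence $p \mid q^k - 1$, contradicting the nilpotent factorization hypothesis on $n$. Hence $n_p = 1$, and symmetrically $n_q = 1$, so both Sylow subgroups of $G$ are normal. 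Theorem \ref{thm:nilpotent-prop} now yields $G \cong P \times Q$, which is nilpotent --- a contradiction.

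The main conceptual obstacle has effectively been absorbed into Schmidt's theorem, whose proof is the heart of the matter; once the two-prime reduction is in hand, the sufficiency argument is a direct Sylow count. The one technical pitfall in my plan lies in the necessity direction, where a naive attempt to find an order-$p_i$ automorphism inside $Aut(C_{p_j^k})$ fails because $|Aut(C_{p_j^k})| = p_j^{k-1}(p_j - 1)$ need not be divisible by $p_i$. The correct move is to realize $(C_{p_j})^k$ as the additive group of $\mathbb{F}_{p_j^k}$ and exploit the cyclic multiplicative group of the larger field to produce the required automorphism.
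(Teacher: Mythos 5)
Your proposal is correct and follows essentially the same route as the paper: a semidirect product $(C_{p_j})^k \rtimes C_{p_i}$ for necessity, and a minimal-counterexample argument via Schmidt's theorem plus a Sylow count for sufficiency. The only difference is cosmetic --- you obtain the order-$p_i$ automorphism from the cyclic group $\mathbb{F}_{p_j^k}^{\times}$ acting by field multiplication, whereas the paper invokes Cauchy's theorem on $GL_k(\mathbb{F}_{p_j})$, whose order $(p_j^k-1)(p_j^k-p_j)\cdots(p_j^k-p_j^{k-1})$ is divisible by $p_i$; both yield the same conclusion.
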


The following proof is adapted from \cite{pakianathan}.

\begin{proof}

We first prove necessity.  Let $n$ be a positive integer with standard prime factorization 
$p_1^{a_1}p_2^{a_2} \dots p_r^{a_r}$, and suppose $n$ does not have nilpotent factorization.  Then there exist
primes $p_i$ and $p_j$ and an exponent $k$ between $1$ and $a_j$ such that $p_i \mid p_j^k - 1$.  For simplicity,
relabel so that we have $p_2 \mid p_1^k - 1$.  Let $E$ be an elementary abelian group consisting of the 
direct product of $k$ copies of $C_{p_1}$, which we denote by $C_{p_1}^k$.
We can determine the order of $Aut(E)$ by identifying it with 
$GL_k(\mathbb{F}_{p_1})$.  This is the set of $k \times k$ matrices with entries in $\mathbb{F}_{p_1}$ and nonzero determinant.  By
constructing such a matrix row-by-row, we can see that the order of $GL_k(\mathbb{F}_{p_1})$ and thus of $Aut(E)$ is 
$(p_1^k - 1)(p_1^k - p_1) \dots (p_1^k - p_1^{k-1})$.  By assumption, $p_2 \mid ord(Aut(E))$, so there exists a 
nontrivial semidirect product $E \rtimes C_{p_2}$.  We construct the group of order $n$
$$ (E \rtimes C_{p_2}) \times C_{p_1}^{a_1 - k} \times C_{p_2}^{a_2 - 1} \times C_{p_3}^{a_3} \times \dots \times C_{p_r}^{a_r}$$
In a nilpotent group, elements from different Sylow subgroups commute, but in our group, all of the elements of 
$E$ have order $p_1$, and they do not commute with the elements of order $p_2$ in the semidirect product.  Therefore,
we have produced an example of a group of order $n$ that is not nilpotent.

Now we prove sufficiency.  Let $n$ be a minimal counterexample, so $n$ has 
nilpotent factorization but is not a nilpotent number.  Take a group $G$ of order $n$ that is not nilpotent.
Any divisor of $n$ also has nilpotent factorization, so any proper subgroup of $G$ is nilpotent.  Since $G$
is a finite group that is not nilpotent but has all subgroups nilpotent, we may apply Theorem 5.3
to conclude that $n = p^aq^b$ for distinct primes $p$ and $q$.  

Let $n_p$ and $n_q$ denote the number of Sylow $p$-subgroups and Sylow $q$-subgroups of $G$ respectively.  
By Sylow's theorem, $n_p \equiv 1 \text{ (mod }p)$, and $n_p$ also equals the index in $G$ of the normalizer of
some Sylow $p$-subgroup, $N_G(S_p)$.  Since $S_p \subseteq N_G(S_p) \subseteq G$, we know that the order of 
$N_G(S_p)$ is $p^aq^k$ for some integer $k$, and has index in $G$ equal to $q^{b-k}$.  Thus, 
$q^{b-k} \equiv 1 \text{ (mod }p)$, but since $n$ has nilpotent factorization, this is only possible if $b - k = 0$, so that
$N_G(S_p) = G$.  Thus, $S_p \triangleleft G$, and a symmetric argument on $q$ shows that $S_q \triangleleft G$.  
But this means that all Sylow subgroups are normal in $G$, and so $G$ is nilpotent, a contradiction. 

\end{proof}

\subsection[Applying Nilpotent Factorization]{Applying Nilpotent Factorization to Abelian and Cyclic Numbers}

The nilpotent factorization that characterized nilpotent numbers easily extends to provide an alternate
proof of the characterization of abelian and cyclic numbers.

\begin{theorem}

A positive integer $n$ is an abelian number $\iff$ $n$ is cube-free and has nilpotent factorization.

\end{theorem}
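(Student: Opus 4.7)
The plan is to prove both directions by combining the nilpotent number theorem just established with what we already know about groups of prime-power order of small exponent, together with Lemma~\ref{lem:prime-cubed}.

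For necessity, suppose $n$ is an abelian number. Then in particular every group of order $n$ is nilpotent, so $n$ is a nilpotent number and hence has nilpotent factorization by the previous theorem. To see that $n$ must be cube-free, I would argue by contradiction: if $p^3 \mid n$ for some prime $p$, then by Lemma~\ref{lem:prime-cubed} there exists a nonabelian group $N$ of order $p^3$, and then the direct product $N \times C_{n/p^3}$ is a nonabelian group of order $n$, contradicting that $n$ is an abelian number.

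For sufficiency, suppose $n = p_1^{a_1}\cdots p_r^{a_r}$ is cube-free and has nilpotent factorization, and let $G$ be any group of order $n$. Since $n$ has nilpotent factorization, the nilpotent number theorem gives that $G$ is nilpotent. Theorem~\ref{thm:nilpotent-prop} then yields $G \cong P_1 \times P_2 \times \cdots \times P_r$, where $P_i$ is the Sylow $p_i$-subgroup of $G$. Because $n$ is cube-free, each $a_i \in \{1,2\}$, so each $P_i$ has order either $p_i$ or $p_i^2$. Every such group is abelian: groups of prime order are cyclic, and groups of order $p^2$ are well-known to be abelian (one can quickly argue via the nontriviality of the center of a $p$-group, then observing that $G/Z(G)$ cyclic forces $G$ abelian). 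A direct product of abelian groups is abelian, so $G$ is abelian, completing the proof.

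The argument is largely a recombination of prior results, so no single step should present a real obstacle. The only subtle point worth flagging is that one should explicitly justify why every group of order $p^2$ is abelian, since this fact is used but not proved earlier in the excerpt; this is a short standard argument via $|Z(G)| \neq 1$ for $p$-groups together with the $G/Z(G)$-cyclic-implies-$G$-abelian lemma already invoked in the cyclic numbers proof.
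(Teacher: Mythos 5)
Your proof is correct and follows essentially the same route as the paper: both directions reduce to the nilpotent number theorem, with cube-freeness handled via Lemma~\ref{lem:prime-cubed} and sufficiency via the decomposition of a nilpotent group into Sylow subgroups of order $p$ or $p^2$. Your explicit justification that groups of order $p^2$ are abelian is a welcome addition the paper glosses over, but it does not change the argument.
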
  

\begin{proof}

Let $n$ be a cube-free nilpotent number, and let $G$ be any group of order $n$.  Then $G$ is nilpotent, and so
is the direct product of its Sylow subgroups.  However, if $n$ is cube-free, each of the Sylow subgroups has order that is a
prime or the square of a prime, so each Sylow is the direct product of one or two cyclic groups, and $G$ is also a direct 
product of cyclic groups, hence abelian.

Now, assume that $n$ is an abelian number.  This implies that it is also a nilpotent number, since all abelian groups
are nilpotent.  We also know that $n$ is cube-free, since there exist nonabelian groups of order $p^3$ for any prime $p$,
so $n$ is a cube-free nilpotent number.   

\end{proof}

An analogous proof shows that the cyclic numbers can be characterized as the square-free nilpotent numbers.

\section{Ordered Sylow numbers}

\subsection{Ordered Sylow Towers}

\begin{definition} \label{def:ordsyl}

Let $n$ be a positive integer with standard prime factorization $p_1^{a_1}p_2^{a_2} \dots p_r^{a_r}$, where
$p_1 < p_2 < \dots < p_r$. Let $G$ be a group of
order $n$, and let $P_i$ denote a Sylow $p_i$-subgroup of $G$.
The group $G$ is said to have an \emph{ordered Sylow tower} if any of the following equivalent conditions are satisfied:
\begin{enumerate}
\item $G$ has a normal series $G = R_0 \supset R_1 \supset \dots \supset R_r = \{e\}$ such that 
$|R_{i-1} : R_i| = p_i^{a_i}$ for $i = 1, 2, \dots, r$.
\item For every $i = 1, 2, \dots, r$, the product $P_iP_{i+1} \dots P_r$ is normal in $G$, regardless of the choice of 
Sylow subgroups.
\item For every $i = 1, 2, \dots, r$, there exists some Sylow $p_i$-subgroup such that in the set
$\{P_1, P_2, \dots, P_r\}$, every $P_i$ is normalized by its predecessors, i.\ e.\ $P_i \triangleleft P_1 \dots P_i$ for all
$1 \leq i \leq r$.
\end{enumerate} 
\end{definition}

It is not inherently obvious that these definitions are in fact identical, so we prove their equivalence.
We will need the following lemma:

\begin{lemma} \label{lem:index}

If a group $G$ has subgroups $H$ and $K$ of relatively prime index, 
then $|G : H \cap K| = |G : H| \times |G : K|$.

\end{lemma}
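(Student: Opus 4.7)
The plan is to combine two elementary facts: the tower-law divisibilities $|G:H| \mid |G : H \cap K|$ and $|G:K| \mid |G : H \cap K|$, together with the classical bound $|G : H \cap K| \leq |G:H| \cdot |G:K|$. Given coprimality of $|G:H|$ and $|G:K|$, the first pair forces $|G:H| \cdot |G:K| \leq |G : H \cap K|$, while the bound gives the reverse inequality, so equality holds.

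For the divisibilities, I would simply invoke the multiplicativity of index in a tower (Lagrange's theorem): since $H \cap K \subseteq H \subseteq G$, we have $|G : H \cap K| = |G:H| \cdot |H : H \cap K|$, and symmetrically with $K$ in place of $H$. Hence both $|G:H|$ and $|G:K|$ divide $|G : H \cap K|$. Because $(|G:H|,\, |G:K|) = 1$, their product also divides $|G : H \cap K|$, which gives one direction of the desired equality.

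For the reverse inequality, I would exhibit an explicit injection from the left cosets of $H \cap K$ in $H$ into the left cosets of $K$ in $G$ via $h(H \cap K) \mapsto hK$. Well-definedness and injectivity both reduce to the observation that for $h_1, h_2 \in H$, the condition $h_2^{-1}h_1 \in H \cap K$ is equivalent to $h_2^{-1}h_1 \in K$: the containment $H \cap K \subseteq K$ supplies one direction, and in the other direction membership $h_2^{-1}h_1 \in H$ is automatic since $h_1, h_2 \in H$. It follows that $|H : H \cap K| \leq |G : K|$, and then the tower law once more yields $|G : H \cap K| = |G:H| \cdot |H : H \cap K| \leq |G:H| \cdot |G:K|$. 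I do not foresee a genuine obstacle here; the only subtlety is checking well-definedness and injectivity of the coset map, which is a single line of coset arithmetic.
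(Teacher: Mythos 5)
Your proposal is correct and follows essentially the same route as the paper: both arguments combine the tower law $|G : H \cap K| = |G:H|\,|H : H \cap K|$ with the bound $|G : H \cap K| \le |G:H|\,|G:K|$ and use coprimality to force equality. The only difference is that you actually prove the inequality $|H : H \cap K| \le |G:K|$ via the coset injection, whereas the paper asserts it as standard.
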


\begin{proof}
Note that for any $H$ and $K$, $|G : H \cap K| = |G : H| \times |H : H \cap K|$ and certainly
$|G : H \cap K| \leq |G : H| \times |G : K|$.  However, if $|G : H|$ and $|G : K|$ are relatively prime, then because 
$|G : K|$ must divide $|G : H \cap K|$, it must divide $|H : H \cap K|$, implying that $|G : H \cap K| \geq |G : H| \times |G : K|$,
forcing $|G : H \cap K| = |G : H| \times |G : K|$.
\end{proof}

We will also need another theorem of Philip Hall:

\begin{theorem} \label{thm:hall-set}

If $G$ is a finite solvable group of order $p_1^{a_1} \dots p_r^{a_r}$, then there exists
a particular set of Sylow subgroups
$P_1, P_2, \dots, P_r$ such that $P_iP_j = P_jP_i$ is a group for all $i \neq j$, and in fact
$P_{i_1}P_{i_2} \dots P_{i_k}$ is a group for any subset $\{i_1, i_2, \dots, i_r\}$ of the indices 
from $1$ to $r$.

\end{theorem}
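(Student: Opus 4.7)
The plan is to construct the desired Sylow subgroups as intersections of certain Hall subgroups, exploiting the fact that solvability gives us all Hall subgroups via Theorem \ref{thm:hall}. For each $1 \leq i \leq r$, set $\pi_i = \{p_1, \dots, p_r\} \setminus \{p_i\}$ and apply Theorem \ref{thm:hall} to obtain a Hall $\pi_i$-subgroup $H_i$ of $G$; by definition, $H_i$ has index $p_i^{a_i}$ in $G$. Since these indices are pairwise coprime, an iterated application of Lemma \ref{lem:index} gives
$$\left| G : \bigcap_{j \in T} H_j \right| = \prod_{j \in T} p_j^{a_j}$$
for any nonempty $T \subseteq \{1, \dots, r\}$. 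In particular, defining $P_i := \bigcap_{j \neq i} H_j$ produces a subgroup of order exactly $p_i^{a_i}$, i.e.\ a specific Sylow $p_i$-subgroup of $G$. These are the candidates for the theorem.

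I then claim that for every subset $S = \{i_1, \dots, i_k\} \subseteq \{1, \dots, r\}$ the product $P_{i_1} P_{i_2} \cdots P_{i_k}$ equals $K_S := \bigcap_{j \notin S} H_j$, which is manifestly a subgroup. The proof is by induction on $|S|$. The base case $|S| = 1$ is the definition of $P_i$. For the inductive step, write $S = T \cup \{i_k\}$; by induction $P_{i_1} \cdots P_{i_{k-1}} = K_T$. Both $K_T$ and $P_{i_k}$ sit inside $K_S$: the first because enlarging the subset shrinks the index set of the intersection, and the second because $P_{i_k} \subseteq H_j$ for every $j \neq i_k$, in particular for every $j \notin S$. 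Their orders $|K_T| = \prod_{j \in T} p_j^{a_j}$ and $|P_{i_k}| = p_{i_k}^{a_{i_k}}$ are coprime, so by Lagrange $K_T \cap P_{i_k} = \{e\}$, and therefore $|K_T P_{i_k}| = |K_T||P_{i_k}| = |K_S|$. Since $K_T P_{i_k} \subseteq K_S$, equality of cardinalities forces $K_T P_{i_k} = K_S$, completing the induction. Specializing to $S = \{i,j\}$ yields $P_i P_j = K_{\{i,j\}} = P_j P_i$, which is the commutativity assertion.

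I do not anticipate serious obstacles beyond this structural setup. Once the Sylow subgroups are defined as the correct intersections of Hall complements, the remaining verifications reduce to pairwise coprimality arguments together with Lemma \ref{lem:index}. The only real idea required is recognizing that compatible Sylow subgroups must be built in a coordinated fashion from the Hall subgroups supplied by Theorem \ref{thm:hall}, rather than chosen independently via Sylow's theorem; Sylow's theorem alone provides no control over how different Sylow subgroups interact, whereas taking $P_i = \bigcap_{j\neq i} H_j$ automatically forces every subcollection to multiply into one of the preselected Hall subgroups $K_S$.
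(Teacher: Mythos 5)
Your proposal is correct and follows essentially the same route as the paper: both construct the Hall $\pi_i$-subgroups $H_i$ of index $p_i^{a_i}$, define $P_i$ as the intersection of all $H_j$ with $j\neq i$, and identify products of the $P_i$'s with the appropriate Hall subgroups by comparing cardinalities via Lemma \ref{lem:index}. Your explicit induction on $|S|$ merely fills in the ``and similarly for arbitrary products'' step that the paper leaves to the reader.
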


This proof is taken from \cite[p. 232]{gorenstein}. 

\begin{proof}

Let $p_i'$ denote the set of all distinct primes dividing the order of $G$ except $p_i$.  
Then Hall's Theorem (Theorem ~\ref{thm:hall}) tells us that there exist subgroups $H_1, H_2, \dots, H_r$ of $G$
satisfying $|G : H_i| = p_i^{a_i}$.  For any $i \neq j$, $|G : H_i|$ is relatively prime to 
$|G : H_j|$, so $|G : H_i \cap H_j| = |G : H_i| \times |G : H_j|$ by Lemma ~\ref{lem:index}.  Therefore, 
$H_i \cap H_j$ is a Hall $\pi$-subgroup of $G$ for $\pi$ the set of all primes dividing the order of
$G$ except $p_i$ and $p_j$.  This can continue to show that $H_i \cap H_j \cap H_k$ is a 
Hall subgroup of $G$ for all distinct $i$,$j$, and $k$, and in general we can produce all possible Hall
subgroups as intersections of the $H_i$s.

Eventually we create Sylow subgroups $P_1, \dots, P_r$ from this process.  
For this set of Sylow subgroups, $P_i$ is the intersection of all the $H$s except for $H_i$. 
For $i \neq j$, let $Q_{ij}$ be the Hall $\{p_ip_j\}$-subgroup created by intersecting all of the $H$s except for $H_i$ and $H_j$. 
Then $P_iP_j \subseteq Q_{ij}$ and $ord(P_iP_j) = ord(Q_{ij})$, so $P_iP_j = P_jP_i = Q_{ij}$ is a subgroup of 
$G$, and similarly so are arbitrary products of members of this set of Sylow subgroups. 

\end{proof}

\begin{lemma}

The three conditions in Definition \ref{def:ordsyl} are all equivalent.  

\end{lemma}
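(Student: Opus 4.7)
The plan is to prove the equivalence via the cycle $(1) \Rightarrow (2) \Rightarrow (3) \Rightarrow (1)$.

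For $(1) \Rightarrow (2)$, I would argue by downward induction on $i$ that $R_i = P_{i+1} P_{i+2} \cdots P_r$ for any choice of Sylow subgroups. The engine is the observation that when $R_i \triangleleft G$, every Sylow $p_j$-subgroup of $G$ with $j > i$ must be contained in $R_i$: otherwise $P_j R_i$ would be a subgroup of $G$ whose $p_j$-part exceeds $p_j^{a_j}$. Combined with the inductive assumption $R_i = P_{i+1} \cdots P_r$ and the coprime-order identity $P_i \cap R_i = \{e\}$, the product $P_i R_i$ has order exactly $|R_{i-1}|$ and lies inside $R_{i-1}$, forcing equality. Hence $P_i P_{i+1} \cdots P_r = R_{i-1}$ is a subgroup normal in $G$, regardless of choice.

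For $(2) \Rightarrow (3)$, I would first note that condition 2 yields a chain $G \supset P_2 \cdots P_r \supset \cdots \supset P_r \supset \{e\}$ of subgroups normal in $G$ with $p$-group quotients, so repeated application of Lemma~\ref{lem:solv} shows $G$ is solvable. Theorem~\ref{thm:hall-set} then furnishes a Sylow basis $\{P_1, \dots, P_r\}$ whose partial products are all subgroups. Set $K_i = P_1 \cdots P_i$ and $R_{i-1} = P_i \cdots P_r$; an order comparison together with $P_i \subseteq K_i \cap R_{i-1}$ forces $K_i \cap R_{i-1} = P_i$. Since $R_{i-1} \triangleleft G$ by hypothesis, its intersection with the subgroup $K_i$ is normal in $K_i$, delivering exactly $P_i \triangleleft P_1 \cdots P_i$ for this Sylow basis.

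For $(3) \Rightarrow (1)$, I would induct on the number of distinct primes $r$, with the base case $r = 1$ trivial. First, an upward induction on $i$ leveraging condition 3 shows $P_1 \cdots P_i$ is a subgroup of order $p_1^{a_1} \cdots p_i^{a_i}$, so in particular $P_1 \cdots P_r = G$ and the $i = r$ case reads $P_r \triangleleft G$. Passing to $\bar G = G/P_r$, the images $\bar P_1, \dots, \bar P_{r-1}$ are Sylow subgroups of $\bar G$, and the relation $\bar P_i \triangleleft \bar P_1 \cdots \bar P_i$ transfers directly, since conjugation by any element of $P_r$ becomes trivial in the quotient. The inductive hypothesis provides a normal series of condition-1 type in $\bar G$ for $r-1$ primes; pulling back each term to its preimage in $G$ (normal because preimages of normal subgroups are normal) and appending $\{e\}$ gives the required series in $G$, with the bottom quotient $R_{r-1}/R_r = P_r$ of order $p_r^{a_r}$.

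The main technical care required is bookkeeping about when a set-theoretic product of Sylow subgroups is actually a subgroup of the expected cardinality; Theorem~\ref{thm:hall-set} shoulders this in $(2) \Rightarrow (3)$, while in the other implications the identity $|AB| = |A||B|/|A \cap B|$ applied to coprime-order factors handles the accounting. The subtlest individual step is the equality $R_i = P_{i+1} \cdots P_r$ in $(1) \Rightarrow (2)$, since set-theoretic products of subgroups are typically not subgroups; the inductive construction inside the prescribed normal series is what saves us.
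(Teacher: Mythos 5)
Your proof is correct, but it is organized differently from the paper's. The paper does not prove a single cycle: it shows $2\Rightarrow 1$ and $1\Rightarrow 2$ separately by reverse induction, then $1\wedge 2\Rightarrow 3$ via solvability and Theorem~\ref{thm:hall-set}, and finally $3\Rightarrow 2$ by another reverse induction on the products $S_k\cdots S_r$, using conjugacy of Sylow subgroups to get independence of the choice. Your $(2)\Rightarrow(3)$ is essentially the paper's argument (solvability from the tower, a Sylow system from Theorem~\ref{thm:hall-set}, and the second isomorphism theorem applied to $K_i\cap R_{i-1}$). Your $(1)\Rightarrow(2)$ reaches the same conclusion by a cleaner mechanism: instead of the paper's ``pick one Sylow $p_{k-1}$-subgroup inside $R_{k-2}$ and conjugate,'' you show directly that the normal subgroup $R_i$ must absorb \emph{every} Sylow $p_j$-subgroup for $j>i$ (via the order of $P_jR_i$), which makes the independence of choice automatic rather than an afterthought. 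Your $(3)\Rightarrow(1)$ is genuinely different: the paper routes through condition 2 by reverse induction on the products, whereas you induct on the number of distinct primes, extract $P_r\triangleleft G$ from the top of the chain, pass to $G/P_r$ (where the images $\bar P_i$ inherit condition 3 as homomorphic images of normal subgroups), and pull the resulting series back. This buys a shorter overall argument at the cost of one extra observation you should make explicit when writing it up: condition 3's statement $P_i\triangleleft P_1\cdots P_i$ presupposes that each partial product is a subgroup of order $p_1^{a_1}\cdots p_i^{a_i}$, which your upward induction with the coprime-order count (Lemma~\ref{lem:index}-style) supplies. Both approaches are sound; the paper's gives all pairwise implications among the three conditions explicitly, while yours is more economical.
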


\begin{proof}

To show $2 \implies 1$, proceed by reverse induction from $r$ to $1$.  Definition $2$ says that
$P_r \triangleleft G$, so we may take $R_{r-1} = P_r$ for the base case.  Now, if for some $k > 1$ we have
$P_k\dots P_r = R_{k-1}$, then since $P_{k-1} \dots P_r \triangleleft G$, the third isomorphism theorem
tells us that $R_{k-1}$ is normal in $P_{k-1} \dots P_r$ and has index $p_{k-1}^{a_{k-1}}$, so we may take
$R_{k-2} = P_{k-1} \dots P_r$, and thus construct a normal series of $G$ satisfying definition $1$.

To show $1 \implies 2$, we again proceed by reverse induction from $r$ to $1$.  In the normal series
of definition $1$, $R_{r-1}$ must be a Sylow $p_r$-subgroup of $G$, but then certainly this Sylow
subgroup $P_r$ is normal (and unique) in $G$, so the base case is established.  Now let $k > 1$, 
and suppose that for any choice of the Sylow subgroups, 
$P_k \dots P_r = R_{k-1}$ for every normal series of $G$ satisfying definition $1$, which also
implies that $P_k \dots P_r \triangleleft G$.  Then there is some $R_{k-2}$ such that
$R_{k-1} \triangleleft R_{k-2}$, and $|R_{k-2} : R_{k-1}| =   p_{k-1}^{a_{k-1}}$.  Certainly $R_{k-2}$
contains some Sylow $p_{k-1}$-subgroup $P_{k-1}^*$ of $G$, so we must have that 
$R_{k-2} = P_{k-1}^*P_k \dots P_r$, and therefore $P_{k-1}^*P_k \dots P_r \triangleleft G$ for this choice
$P_{k-1}^*$.  However, any general choice  $P_{k-1}$ of the Sylow $p_{k-1}$-subgroup in $G$ produces the same $R_{k-2}$,
because any two Sylow $p_{k-1}$-subgroups
are conjugate, and $(gP_{k-1}^*g^{-1})P_k \dots P_r = gP_{k-1}^*(g^{-1}P_k \dots P_r) = 
gP_{k-1}^*(P_k \dots P_rg^{-1}) = g(P_{k-1}^*P_k \dots P_r)g^{-1} = P_{k-1}^*P_k \dots P_r$.
 
We now show that $1$ and $2 \implies 3$.  First we show that Definition 1 implies that $G$ is solvable.  Indeed, $G = R_0$ has a 
normal subgroup $R_1$ such that $R_0 / R_1$ is a $p$-group, so is solvable.  Then $G$ is solvable if $R_1$ is solvable.  
But $R_1$ has a normal subgroup $R_2$ such that $R_1 / R_2$ is solvable, so $G$ is solvable if $R_2$ is solvable, and continuing this 
process, we determine that $G$ is solvable if the trivial group is solvable, so $G$ is solvable. We may thus 
apply Theorem \ref{thm:hall-set} to find a set
of Sylow subgroups $S_1, \dots, S_r$ such that $S_1 \dots S_i$ is a subgroup of $G$ for all $1 \leq i \leq r$.  In 
this set of Sylow subgroups, since $S_1 \dots S_i$ is a subgroup for any $i$ and $S_i \dots S_r \triangleleft G$ by
Definition 2, the second isomorphism theorem tells us that $S_1 \dots S_i \cap S_i \dots S_r = S_i$ is a normal subgroup
of $S_1 \dots S_i$, which is equivalent to definition 3.

Finally, to show that $3 \implies 2$, we again proceed by reverse induction from $r$ to $1$.  Let the specific
Sylow subgroups in definition 3 be given by $S_1,S_2, \dots, S_r$.  The base case is clear,
since the specific $S_r$ in definition 3 is normal in $G$, and unique.
Now, for $k > 1$, we suppose that for any choice of the Sylow subgroups $P_k, P_{k+1}, \dots, P_r$, the product $P_k \dots P_r$ is identical to $S_k \dots S_r$, 
and that $S_k \dots S_r$ is normal in $G$.  Then certainly $G / S_k \dots S_r \cong S_1 \dots S_{k-1}$, so 
definition 3 implies that $\overline{S}_{k-1} \triangleleft G / S_k \dots S_r$ for $\overline{S}_{k-1}$ 
isomorphic to $S_{k-1}$.  This implies that
the group $S_{k-1}S_k \dots S_r$ is normal in $G$, but since all Sylow
$p_{k-1}$-subgroups are conjugate, we may again conclude that $S_{k-1}S_k \dots S_r$ is identical to 
$P_{k-1}S_k \dots S_r$ for any choice of $P_{k-1}$, which is identical to any possible $P_{k-1} \dots P_r$ and is normal in $G$.

\end{proof}

If the above definition is satisfied for some permutation of the primes $p_i$ that is not in increasing order,
then $G$ is said to simply have a \emph{Sylow tower}.

To establish results in this section, we will make use of the multiplicative $\psi$ function, defined on prime powers as
$\psi(p^k) = (p^k - 1)(p^{k-1} - 1) \dots (p - 1)$.

\subsection{Preliminary Results}

\begin{definition}

Let $G$ be a finite group, and let $B$ be the intersection of every normal subgroup $M$ of $G$ such that
$G/M$ is nilpotent.  Then the group $B$ is normal in $G$.  Furthermore, the standard homomorphism
from $G/B$ into the direct product of all nilpotent factor groups $G/M_i$ of $G$
is injective by definition, so $G/B$ is isomorphic to a subgroup of this direct product.  Since this direct product is nilpotent,
$G/B$ is nilpotent, and is called the \emph{maximal nilpotent factor group} of $G$.  

Similarly, let $p_i$ be a prime dividing the order of $G$, and let $A_i$ be the intersection of every normal subgroup
of $G$ that has $p_i$-power index in $G$.  Then $A_i$ is normal and has $p_i$-power index in $G$, and $G/A_i$ is called the
\emph{maximal $p_i$-factor group} of $G$.

\end{definition}

We will need the following lemma from \cite[p. 129]{zassenhaus} relating these kinds of subgroups:

\begin{lemma} \label{lem:max}

The maximal nilpotent factor group of a finite group $G$ is isomorphic to the direct product of 
the maximal $p_i$-factor groups of $G$ for each prime $p_i$ dividing its order.

\end{lemma}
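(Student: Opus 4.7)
The plan is to prove the lemma in two stages: first identify $B$ as the intersection $\bigcap_i A_i$, and then show that the induced map from $G/B$ to $\prod_i G/A_i$ is an isomorphism by a coprime-orders argument.

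First I would establish the inclusion $B \subseteq \bigcap_i A_i$. This is the easier direction: each quotient $G/A_i$ has $p_i$-power order, so $G/A_i$ is a (nilpotent) $p$-group, and hence $A_i$ is one of the normal subgroups whose intersection defines $B$. Taking intersections gives the containment immediately.

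Next I would handle the reverse inclusion $\bigcap_i A_i \subseteq B$, which I expect to be the main conceptual step. Let $M \triangleleft G$ be any subgroup with $G/M$ nilpotent. By Theorem~\ref{thm:nilpotent-prop}, $G/M$ decomposes as a direct product of its Sylow subgroups, say $G/M = Q_1 \times \cdots \times Q_r$ with $Q_i$ the Sylow $p_i$-subgroup. For each $i$, let $N_i$ be the preimage in $G$ of $\prod_{j \neq i} Q_j$; then $G/N_i \cong Q_i$, which is a $p_i$-group, so $N_i$ has $p_i$-power index in $G$ and consequently $A_i \subseteq N_i$. Since $\bigcap_i \prod_{j \neq i} Q_j$ is trivial in $G/M$, we obtain $\bigcap_i N_i = M$, and therefore $\bigcap_i A_i \subseteq \bigcap_i N_i = M$. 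Because this holds for every $M$ appearing in the definition of $B$, we conclude $\bigcap_i A_i \subseteq B$, completing $B = \bigcap_i A_i$.

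Now I would build the natural homomorphism $\phi : G \to \prod_i G/A_i$ sending $g \mapsto (gA_1, \ldots, gA_r)$. Its kernel is exactly $\bigcap_i A_i = B$, so by the first isomorphism theorem $G/B$ embeds as a subgroup of $\prod_i G/A_i$. It remains to show $\phi$ is surjective. The image $\phi(G)$ projects surjectively onto each factor $G/A_i$, since already the component map $g \mapsto gA_i$ does. Because the orders $|G/A_i| = p_i^{k_i}$ for distinct primes $p_i$ are pairwise coprime, Lagrange's theorem forces $|\phi(G)|$ to be divisible by every $|G/A_i|$ and hence by their product, which is the order of $\prod_i G/A_i$. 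Therefore $\phi$ is onto and $G/B \cong \prod_i G/A_i$ as required.

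The trickiest point I anticipate is making sure the identification $\bigcap_i N_i = M$ inside $G$ is argued cleanly from the coordinate-projection decomposition of $G/M$; once $B = \bigcap_i A_i$ is in hand, the rest is a clean application of pairwise-coprime orders to promote an injection into a direct product to an isomorphism.
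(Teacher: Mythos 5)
Your proof is correct, and its overall skeleton matches the paper's: establish $B=\bigcap_i A_i$ and then promote the injection $G/B\hookrightarrow\prod_i G/A_i$ to an isomorphism using the pairwise coprimality of the indices $|G:A_i|$. The two sub-steps are executed differently, though. For the inclusion $\bigcap_i A_i\subseteq B$, the paper argues by contrapositive on elements: given $c\notin B$, it picks $K$ with $G/K$ nilpotent and $c\notin K$, chooses a prime $p_0$ dividing the order of the image of $c$, and composes quotient maps to land in a $p_0$-group whose kernel misses $c$. You instead argue structurally, writing each $M$ with nilpotent quotient as $M=\bigcap_i N_i$ where $N_i$ is the preimage of $\prod_{j\neq i}Q_j$ and each $N_i\supseteq A_i$; this is the cleaner, ``dual'' formulation of the same idea (note only that when $p_i\nmid|G/M|$ the factor $Q_i$ is trivial and $N_i=G$, which does no harm). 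For the isomorphism, the paper invokes Lemma~\ref{lem:index} inductively to compute $|G:\bigcap_i A_i|=\prod_i|G:A_i|$ and concludes by counting; you avoid that lemma entirely by observing that the image of $\phi$ surjects onto each coordinate, so each coprime $|G/A_i|$ divides $|\phi(G)|$ by Lagrange, forcing surjectivity. Both routes rest on the same two facts (a nilpotent quotient is the direct product of its Sylow subgroups, and coprime indices multiply), so neither buys extra generality, but your version is somewhat more self-contained since it does not depend on the index lemma.
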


\begin{proof}

Let $B$ be the intersection of all normal subgroups $M$ of $G$ where $G/M$ is nilpotent, and let $A$ be the intersection of 
all of the $A_i$ in $G$ such that $G/A_i$ is a maximal $p_i$-factor group of $G$.  We first prove that $A = B$.

Certainly every normal subgroup $N$ of prime power index in $G$ satisfies that $G/N$ is nilpotent, so
$B \subseteq A$ because $B$ is an intersection of at least all the subgroups intersected in $A$.
To prove that $A \subseteq B$, we prove that if a group element $c \notin B$, then 
$c \notin A$.  If $c \notin B$, then there is some normal subgroup $K$ of $G$ such that $G/K$ is nilpotent and 
$c \notin K$. Let $p_0$ be a prime number dividing the order of $\bar{c}$, the image of $c$ under
the standard homomorphism $h_1 : G \rightarrow G/K$.  
Since $G/K$ is the direct product of its Sylow subgroups, we may compose standard homomorphisms 
that quotient out each Sylow subgroup of $G/K$ except for $P_0$, the Sylow $p_0$-subgroup.  This creates
a homomorphism $h_2$ from $G/K$ to a $p_0$-group.
Because the order of $\bar{c}$ is divisible by $p_0$, it does not map to the identity in the 
image of $h_2$, so $\bar{c}$  is not in the kernel of $h_2$.  Now, this means that $c$ is not in the kernel of
the homomorphism $h_2 \circ h_1$, but this kernel is a normal subgroup of $G$ of $p_0$-power index,
so $c \notin A$, and $A = B$.

To prove that $G/A \cong G/A_1 \times G/A_2 \times \dots \times G/A_r$, we first note that the standard homomorphism 
$h : G/A \rightarrow G/A_1 \times G/A_2 \times \dots \times G/A_r$ is injective by definition.  It will therefore suffice to prove
that $G/A$ and $G/A_1 \times G/A_2 \times \dots \times G/A_r$ have the same order, so we must prove that
the index $|G : A|$ is equal to the product of the indices $|G : A_1| \times |G : A_2| \times \dots \times |G : A_r|$.  

Now let $A_{m_*}$ represent $A_1 \cap A_2 \cap \dots \cap A_m$.  We prove by induction that 
$|G : A_{m_*}| =  |G : A_1| \times |G : A_2| \times \dots \times |G : A_m|$ for all $1 \leq m \leq r$, from which it would follow that 
indeed $|G : A| = |G : A_1| \times |G : A_2| \times \dots \times |G : A_r|$.  The base case is trivial, so assume that for
some $k < r$ we have $|G : A_{k_*}| = |G : A_1| \times |G : A_2| \times \dots \times |G : A_k|$.  Because $|G : A_{k_*}|$
and $|G : A_{k+1}|$ must be relatively prime, we have by Lemma \ref{lem:index} that 
$|G : A_{k_*} \cap A_{k+1}| = (|G : A_1| \times |G : A_2| \times \dots \times |G : A_k|) \times |G : A_{k+1}|$,
and the induction holds. 

\end{proof}

We wish to prove a further results on maximal $p$-factor groups, but
we first need to derive the two \Grun theorems.  To derive these, we will need to 
prove the following theorem:

\begin{theorem} \label{thm:trans-P}

Suppose that $P$ is a Sylow $p$-subgroup of $G$, and $G'$ the commutator subgroup of $G$.  Then
$V_{G \rightarrow P}(G) \cong P/(P \cap G')$.

\end{theorem}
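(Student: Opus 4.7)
The plan is to exhibit an explicit factorization of $V_{G \rightarrow P}$ through $P/(P \cap G')$ and show that the factored map is injective by comparing it with the $n$-th power map, where $n = |G:P|$.

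The crucial computation is the following identity: if $\iota_*: P/P' \to G/G'$ is the map induced by $P \hookrightarrow G$ (well-defined because $P' \subseteq G'$), then the composition $\iota_* \circ V_{G \rightarrow P}: G \to G/G'$ equals the map $g \mapsto g^n G'$. To verify, choose any coset representatives $\{x_1, \ldots, x_n\}$; then $V_{G \rightarrow P}(g) \equiv \prod_i x_i g\, \phi(x_i g)^{-1} \pmod{P'}$, and passing to the abelian group $G/G'$ this becomes $g^n \prod_i x_i \phi(x_i g)^{-1}$ modulo $G'$. Since right multiplication by $g$ permutes the right cosets, $\{\phi(x_i g)\}_i$ is a permutation of $\{x_i\}_i$, so the residual product collapses to the identity in $G/G'$.

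Next, since $P/P'$ is abelian, $V_{G \rightarrow P}$ factors through $G/G'$; and since $P/P'$ is a $p$-group, the induced map further factors through the Sylow $p$-subgroup of the abelian group $G/G'$. Using the standard fact that $P \cap G'$ is a Sylow $p$-subgroup of $G' \triangleleft G$, one finds that $|PG'/G'| = |P|/|P \cap G'|$ equals the $p$-part of $|G/G'|$, so $PG'/G' \cong P/(P \cap G')$ is that Sylow subgroup. Hence $V_{G \rightarrow P}$ factors as $G \twoheadrightarrow P/(P \cap G') \xrightarrow{\bar V} P/P'$ with $V_{G \rightarrow P}(G) = \operatorname{Im}(\bar V)$. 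Composing $\bar V$ with $\iota_*$ and then with the projection $G/G' \twoheadrightarrow PG'/G' \cong P/(P \cap G')$ yields, by the identity above, the map $u(P \cap G') \mapsto u^n(P \cap G')$, which is an automorphism of the $p$-group $P/(P \cap G')$ since $\gcd(n,p) = 1$. This forces $\bar V$ to be injective, giving the desired isomorphism $V_{G \rightarrow P}(G) \cong P/(P \cap G')$. The main obstacle is the initial computation: recognizing that the unwieldy transfer formula collapses modulo $G'$ to the clean $n$-th power map, after which standard Sylow theory in abelian groups finishes the argument.
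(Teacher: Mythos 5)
Your proof is correct, and while it rests on the same pivotal fact as the paper's --- that the transfer, read modulo $G'$ rather than modulo $P'$, becomes the $n$-th power map, where $n = |G:P|$ is prime to $p$ --- you both reach that fact and exploit it by a genuinely different route. The paper first notes that $p'$-elements die in the $p$-group $P/P'$, so $V(G) = V(P)$, and then establishes the identity only for $u \in P$ by reusing the cycle decomposition $S_u$ of coset representatives from the proof of Burnside's complement theorem; it then reads off the kernel of $V|_P$ directly as $P \cap G'$. You instead prove $\iota_* \circ V(g) = g^n G'$ for every $g \in G$ at once, using only the observation that $x_i \mapsto \phi(x_i g)$ permutes the chosen representatives and that everything commutes in $G/G'$; this is cleaner and avoids the cycle machinery entirely. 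You then identify the image structurally: since $P \cap G'$ is a Sylow $p$-subgroup of the normal subgroup $G'$, the subgroup $PG'/G' \cong P/(P \cap G')$ is exactly the $p$-part of the abelianization, $V$ factors through it, and injectivity of the factored map $\bar V$ follows because its composite with $\iota_*$ and the projection back onto the $p$-part is the $n$-th power map, an automorphism of the $p$-group $P/(P \cap G')$. The two arguments cost about the same overall; yours trades the explicit cycle computation for a little Sylow bookkeeping in $G/G'$, and has the advantage that the key identity $\iota_* \circ V = (\,\cdot\,)^n$ is stated and proved on all of $G$, which is the form in which it is most often quoted and reused.
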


The proofs for this theorem and the two \Grun theorems are adapted from \cite[p.213]{hall}.

\begin{proof}

First, note that $V_{G \rightarrow P}$ is a homomorphism of $G$ into $P/P'$, which is a $p$-group, so
any element of $G$ of order relatively prime to $p$ must be mapped to the identity in $P/P'$.  Since $G$ is generated by $P$
along with other Sylow subgroups containing elements of order relatively prime to $p$, it suffices to consider the transfer $V$ restricted to $P$.  Take any set of representatives $\{x_1,\dots,x_n\}$ of the conjugacy classes of $P \backslash G$.  To evaluate on a particular $u \in P$, we consider the same construction $S_u$ of disjoint cycles of elements of the form 
$\{x_i, x_iu,\dots,x_iu^{r-1}\}$ used in the proof of Burnside's Complement Theorem.  Following the same argument as in that proof, we may simplify the transfer to $V(u) = \prod_i x_iu^{r_i}x_i^{-1}$ mod $P'$, 
where the $i$ range across the disjoint cycles.  Continuing, we find that
$\prod_i x_iu^{r_i}x_i^{-1} \text{ mod } P' = \prod_i u^{r_i}(u^{-r_i}x_iu^{r_i}x_i^{-1}) $ mod $P'$.  This does not immediately simplify mod $P'$ as $x_i \not\in P$, but we may calculate this expression further mod $G'$ and find that $V(u) = \prod_i u^{r_i} \text{ mod } G' = u^n \text{ mod } G'$. 

By definition $n$ and $p$ are relatively prime, so for any $u \not\in G'$, we must have $V(u) \neq 1$ mod $G'$.
However, $V(G)$ is always abelian, so $V(G') \cong 1 \text{ mod } P'$ necessarily.  It follows that the kernel of our restricted transfer on $P$ is $P \cap G'$, so $V_{G \rightarrow P}(G) \cong P / (P \cap G')$.   
\end{proof}

Now we will proceed to the two \Grun theorems.

\begin{theorem}[First Theorem of Gr\"un] \label{thm:grun1}

Let $P$ be a Sylow $p$-subgroup of a group $G$.  Then $V_{G \rightarrow P}(G) \cong P/P^*$, where
$$P^* = [P \cap (N_G(P))']\cup_{z \in G} (P \cap z^{-1}P'z).$$

\end{theorem}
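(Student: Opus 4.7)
The plan is to combine Theorem~\ref{thm:trans-P}, which identifies $V_{G \rightarrow P}(G)$ with $P/(P \cap G')$, with a second identification $V_{G \rightarrow P}(G) \cong P/P^*$; together these force $P^* = P \cap G'$, which is precisely the conclusion of Gr\"un's First Theorem.

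The containment $P^* \subseteq P \cap G'$ is immediate. Each generator of $P^*$ lies in $P$ by construction, and also in $G'$: elements of $(N_G(P))'$ are commutators of elements of $G$, while each conjugate $z^{-1}P'z$ lies in $G'$ since $P' \subseteq G'$ and $G'$ is normal in $G$. Hence every generator of $P^*$ lies in $P \cap G'$, and so does $P^*$ itself.

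For the reverse containment, the strategy is to show that the transfer, reduced modulo $P^*$, satisfies $V_{G \rightarrow P}(u) \equiv u^n \pmod{P^*}$ for every $u \in P$, where $n = |G : P|$. Note that $P \subseteq N_G(P)$ gives $P' \subseteq P \cap (N_G(P))' \subseteq P^*$, so the transfer can be followed by the natural quotient $P/P' \rightarrow P/P^*$ and treated as a homomorphism $G \rightarrow P/P^*$. Because $(n, p) = 1$ the power map $u \mapsto u^n$ is a bijection of $P$, so once the displayed congruence is established, the restriction of $V_{G \rightarrow P}$ to $P$ already surjects onto $P/P^*$; combined with $V_{G \rightarrow P}(G) \cong P/(P \cap G')$, this forces $P^* = P \cap G'$.

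To establish the congruence, I would reuse the coset-representative system $S_u$ from the proof of Burnside's theorem, which rewrites the transfer as $V_{G \rightarrow P}(u) \equiv \prod_i x_i u^{r_i} x_i^{-1} \pmod{P'}$ with each factor in $P$ and $\sum_i r_i = n$. The whole statement then reduces to a \emph{fusion claim}: whenever $w \in P$ and $xwx^{-1} \in P$ for some $x \in G$, the ratio $(xwx^{-1}) w^{-1}$ lies in $P^*$. This is the delicate main obstacle. The easy case is $x \in N_G(P)$: there $(xwx^{-1})w^{-1} = [x,w]$ lies in $(N_G(P))' \cap P \subseteq P^*$ directly. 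The general case requires a Sylow-theoretic factorisation of $x$: by applying Sylow's theorem inside the normalisers of the intersections $P \cap xPx^{-1}$ (and iterating on smaller such intersections in the spirit of what later becomes Alperin's fusion theorem), one writes $x$ as a product of elements each of which either lies in $N_G(P)$ (contributing a term to $(N_G(P))' \cap P$) or conjugates a piece of $P'$ into $P$ (contributing a term to $P \cap z^{-1}P'z$ for a suitable $z$). The $z^{-1}P'z$ generators are precisely what is needed to absorb these ``error terms'' from conjugating outside of $N_G(P)$. Once the fusion claim is verified, each factor $x_i u^{r_i} x_i^{-1}$ reduces to $u^{r_i}$ modulo $P^*$, and since $P/P^*$ is abelian the product collapses to $u^n$, completing the proof.
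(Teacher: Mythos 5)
Your architecture matches the paper's: both arguments rest on the easy inclusion $P^* \subseteq P \cap G'$ together with the congruence $V_{G\rightarrow P}(u) \equiv u^n \pmod{P^*}$ for $u \in P$, after which coprimality of $n$ and $p$ finishes things off. The gap is in how you propose to establish that congruence. Your ``fusion claim'' --- that $(xwx^{-1})w^{-1} \in P^*$ whenever $w$ and $xwx^{-1}$ both lie in $P$ --- asserts precisely that the focal subgroup of $P$ in $G$ is contained in $P^*$; since the focal subgroup equals $P \cap G'$, the claim is essentially equivalent to the theorem you are trying to prove, so you have reduced the problem to something of the same depth rather than to something easier. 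Moreover, the method you sketch for it (an Alperin-style factorisation of $x$ through normalisers of the intersections $P \cap xPx^{-1}$) is a substantial theorem in its own right, and even granting it you would still owe an argument that a factor $g$ normalising a \emph{proper} subgroup $Q \subsetneq P$ but not $P$ itself contributes an error term lying in one of the two kinds of generators of $P^*$: the commutator $[g,w]$ with $w \in Q$ lands in $Q \subseteq P$ and in $(N_G(Q))'$, but not visibly in $(N_G(P))'$ nor in any $P \cap z^{-1}P'z$. As written, the crux of the proof is missing.

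The paper sidesteps all fusion analysis with two devices you do not use. First, it evaluates the transfer over the double cosets $P\backslash G/P$ rather than over single cosets: the contribution $w_i$ of a double coset $Py_iP$ containing $p^{t}$ right cosets of $P$ is congruent to $y_iu^{p^{t}}y_i^{-1}$ modulo $P \cap y_iP'y_i^{-1} \subseteq P^*$ when $t \geq 1$, while $t = 0$ forces $y_i \in N_G(P)$, so that contribution is congruent to $u$ modulo $P \cap (N_G(P))'$. The two kinds of generators of $P^*$ are thus matched exactly to the two cases. Second, it proves $P \cap G' \subseteq P^*$ by induction on the order of $u$: in the $t \geq 1$ case the element $u^{p^{t}}$ has strictly smaller order than $u$ and again lies in $P \cap G'$, so the inductive hypothesis lets one replace $y_iu^{p^{t}}y_i^{-1}$ by $u^{p^{t}}$ modulo $P^*$. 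If you wish to complete the proof along your own lines you must actually prove the fusion claim; the shorter path is to switch to the double-coset decomposition and the induction on the order of $u$.
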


Note: The notation $G \cup H$ of groups $G$ and $H$ represents the group generated 
by the elements of $G$ and $H$.  In fact, the basic set-theoretic union of distinct $G$ and $H$ is never a group, since elements of the form $gh$ with $g \in G, h \in H$ and 
$g \not\in H, h \not\in G$ can never lie in either $G$ or $H$. 

\begin{proof}

By \ref{thm:trans-P}, we have that $V_{G \rightarrow P}(G) \cong P/(P \cap G')$.  It is clear that 
$[P \cap (N_G(P))'] \subseteq (P \cap G')$ and $(P \cap z^{-1}P'z) \subseteq (P \cap G')$, so that
$P^* \subseteq P \cap G'$.  It suffices to show that $P \cap G' \subseteq P^*$.  We prove that every $u \in P \cap G'$ also lies 
in $P^*$ by induction on the order of $u$ 
(note that this order is $p^r$ for some $r \geq 0$ since we are only considering $u \in P$).   
For order 1, it is clear that the identity element lies in both subgroups.  

We will now use a double coset construction $P \backslash G/P$.  In such a construction, $a \sim b$ 
if and only if there exist $p_1,p_2 \in P$ such that $a = p_1bp_2$.  Let $Y = \{y_1,y_2,\dots,y_s\}$ be a set of representatives of $P \backslash G/P$.  For a given $y_i$, let us further write the coset $Py_iP$ as $Py_iv_1 \cup Py_iv_2 \cup \dots \cup Py_iv_k$ for $v_1 = e$ and some choices of $v_k \in P$. 
Then we have a complete set of left cosets of $G$ of the form $Py_iv_k$.  Let us fix $u \in P$ for evaluation. 
We will once again use a set of representatives $S_u$ as was initially given in the Burnside Complement Theorem, so that in this case cycles of $S_u$
are of the form $y_iv_k, y_iv_ku, \dots, y_iv_ku^{r_{ik}-1}$, where 
$y_iv_ku^{r_{ik}} \in Py_iv_k$.  

We may evaluate $V(u)$ on each double coset separately, that is, we will fix $y_i$ 
and evaluate on all cosets of the form $Py_iv_k$.  The transfer evaluated on the double coset
$Py_iP$ be given by $w_i = \prod_k y_iv_ku^{r_{ik}}v_k^{-1}y_i^{-1}$.  Note that $\sum_k r_{ik} = p^t$ for some $t \geq 0$, since this is the number of left cosets of the form $Py_i$ in the double coset $Py_iP$.  We evaluate $w_i$ in two cases:

Case 1: $t \geq 1$.  Then $w_i = y_iu^{p^t}y_i^{-1}$ when restricted mod $y_iP'y_i^{-1}$.  In the particular case of $v_1 = e$,
we have the factor $y_iu^{p^b}y_i^{-1} \in P$, $b \leq t$.  We also must have that $w_i \in P$ by the construction of the integers $r_{ik}$, so 
in fact $w_i \cong y_iu^{p^t}y_i^{-1} \text{ mod } P \cap y_iP'y_i^{-1}$.  But this group $P \cap y_iP'y_i^{-1}$ lies in $P^*$ by assumption, so $w_i \cong y_iu^{p^t}y_i^{-1} \text{ mod } P^*$.

Now, by assumption $u \in P \cap G'$, so $V(u) \cong 1 \text{ mod } P'$, 
so clearly $V(y_iu^{p^t}y_i^{-1}) \cong 1 \text{ mod } P'$.  
However, now $y_iu^{p^t}y_i^{-1}$ is in the kernel of our transfer, $P \cap G'$, and since $t > 1$, it has order smaller than that of $u$.  By the inductive hypothesis, $y_iu^{p^t}y_i^{-1} \in P^*$.  Also, $u^{p^t} \in P^*$ by the inductive
hypothesis, so $w_i = y_iu^{p^t}y_i^{-1} \cong 1 \cong u^{p^t} \text{ mod } P^*$.

Case 2: $t = 0$.  Then $Py_iP = Py_i$ as cosets, so $Py_i \subseteq N_G(P)$.  Furthermore, 
$w_i = y_iuy_i^{-1} = u(u^{-1}y_iuy_i^{-1}) \cong u $ mod $(N_G(P))'$, and so
$w_i \cong u$ mod $P \cap (N_G(P))'$.   Since $P \cap (N_G(P))' \subseteq P^*$, we have that
$w_i \cong u = u^{p^t} \text{ mod } P^*$.

In any case, $w_i \cong u^{p^{t_i}} \text{ mod } P^*$ for all $i$.  Then the total evaluation of $V(u)$ is
$\prod_i w_i = \prod_i u^{p^{t_i}}$, and this is just $u^{\sum_i p^{t_i}}$.  But recall that 
$p^{t_i}$ is the number of left cosets of $Py_iP$, so the sum of these is the total number $n$ of left cosets of $P$ in $G$.  Thus, $V(u) \cong u^n \text{ mod } P^*$.  But recall that $V(u) \cong 1 \text{ mod } P'$ when $u \in P \cap G'$, so $V(u) \in P' \subseteq P^*$.  But then
$u^n \cong 1$ mod $P^*$, and $n$ is relatively prime to $p$, so $u \in P^*$ as desired.  
 
\end{proof}

Before moving on, we introduce a definition:

\begin{definition}

A finite group $G$ is \emph{$p$-normal} for a prime $p$ if whenever there exists a Sylow $p$-subgroup $P$ of $G$
such that another Sylow $p$-subgroup $Q$ of $G$ contains $Z(P)$, $Z(P) = Z(Q)$.  

\end{definition}

\begin{theorem}[Second Theorem of Gr\"un] \label{thm:grun2}

Suppose that $G$ is a $p$-normal group for some prime $p$.  Then the maximal abelian $p$-factor group of $G$ is isomorphic
to the maximal abelian $p$-factor group of $N_G(Z(P))$ for a Sylow $p$-subgroup $P$.    

\end{theorem}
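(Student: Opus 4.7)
The plan is to reduce the statement, via Theorem \ref{thm:trans-P}, to the equality $P \cap G' = P \cap N'$, where I write $N = N_G(Z(P))$ throughout. First observe that $P \subseteq N$, since $Z(P)$ is characteristic in $P$, so every element of $P$ normalizes $Z(P)$. Because $|P|$ already exhausts the $p$-part of $|G|$ (and hence of $|N|$), $P$ is also a Sylow $p$-subgroup of $N$. A standard counting argument shows that for any finite group the image $PG'/G' \cong P/(P \cap G')$ is the full Sylow $p$-subgroup of the abelianization $G/G'$, which is precisely the maximal abelian $p$-factor group. Combined with Theorem \ref{thm:trans-P}, this identifies the maximal abelian $p$-factor groups of $G$ and $N$ as $P/(P \cap G')$ and $P/(P \cap N')$ respectively. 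Since $N' \subseteq G'$ gives the trivial inclusion $P \cap N' \subseteq P \cap G'$, the problem reduces to proving the reverse inclusion.

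For the reverse inclusion I would apply the First Theorem of Gr\"un (Theorem \ref{thm:grun1}) to both groups, obtaining
$$P \cap G' = [P \cap (N_G(P))'] \cup \bigcup_{z \in G}\bigl(P \cap z^{-1}P'z\bigr),$$
$$P \cap N' = [P \cap (N_N(P))'] \cup \bigcup_{n \in N}\bigl(P \cap n^{-1}P'n\bigr),$$
where the unions denote the subgroups they generate. Because $Z(P)$ is characteristic in $P$, every element of $N_G(P)$ normalizes $Z(P)$, so $N_G(P) \subseteq N$ and therefore $N_N(P) = N_G(P)$; the ``commutator-of-normalizer'' terms in the two decompositions coincide automatically. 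The task thus becomes to show that $P \cap z^{-1}P'z \subseteq P \cap N'$ for every $z \in G$.

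The main obstacle is this last containment, and it is precisely where the hypothesis of $p$-normality enters. Given $z \in G$, set $Q = z^{-1}Pz$, so that $Z(Q) = z^{-1}Z(P)z$. Both $Z(P)$ and $Z(Q)$ centralize $P \cap Q$, hence lie in $C := C_G(P \cap Q)$. The plan is to choose Sylow $p$-subgroups $R_1, R_2$ of $C$ with $Z(P) \subseteq R_1$ and $Z(Q) \subseteq R_2$, extend each to a Sylow $p$-subgroup of $G$, and apply $p$-normality: any Sylow $p$-subgroup of $G$ containing $Z(P)$ has $Z(P)$ as its center, and the analogous statement holds after conjugation for $Z(Q)$. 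Because $R_1$ and $R_2$ are conjugate in $C$ by some $c \in C$, this will force $cz^{-1}$ to normalize $Z(P)$ and hence lie in $N$. Then, since $c$ centralizes $P \cap Q$, each element $x \in P \cap z^{-1}P'z$ can be rewritten as a conjugate by the element $cz^{-1} \in N$ of an element of $P' \subseteq N'$, placing $x$ inside $P \cap N'$. Carrying out the commutator-level bookkeeping that turns $x = z^{-1}yz$ (with $y \in P'$) into such a conjugate, and tracking exactly which Sylow $p$-subgroup of $G$ one extends $R_1$ and $R_2$ to so that both applications of $p$-normality are valid, is the technical heart of the argument.
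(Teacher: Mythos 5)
Your proposal is correct and follows essentially the same route as the paper: reduce via Theorem \ref{thm:trans-P} and the First Theorem of Gr\"un to showing $P \cap z^{-1}P'z \subseteq N'$, then use conjugacy of Sylow subgroups inside a subgroup containing both $Z(P)$ and $z^{-1}Z(P)z$ together with $p$-normality to produce an element of $N_G(Z(P))$ that conjugates $P'$ onto the relevant piece. The only (harmless) deviation is that you work in $C_G(P \cap z^{-1}Pz)$ where the paper works in $N_G(P \cap z^{-1}P'z)$; both contain the two centers and both make the final rewriting $x = (cz^{-1})y(cz^{-1})^{-1}$ go through.
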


Note again that the union of a set of subgroups is defined to be the subgroup
generated by them.

\begin{proof}

Let $G'$ be the commutator subgroup of $G$, and let $G'(p) \supseteq G'$ be the smallest normal subgroup of $G$ such that
$G/G'(p)$ is an abelian $p$-group.  The order of $G'(p)$ must contain every factor of the order of $G$ except for
$p$-powers, so we may conclude that $G = G'(p) \cup P$.  Let $G^* = G' \cup P$.  Clearly then $G^* \cup G'(p) = G$.  
Now, $G^* / G' \subseteq P$, so contains only elements of $p$-power order, while $G'(p)/G'$ by definition contains only elements of order relatively prime to $p$, so the intersection $G^* \cap G'(p)$ must be precisely $G'$.  Now, it is clear that
$(G'(p) \cup G^*) / G'(p)) \cong (G^* / (G^* \cap G'(p))$, which means that
$G/G'(p) \cong G^*/G' \cong P / (P \cap G')$. 

Now let $Z = Z(P)$, $N = N_G(P)$ and $H = N_G(Z)$. 
Since the center $Z$ is characteristic in $P$, it must be that $N \subseteq H$.
Let $H'(p)$ denote the smallest normal subgroup of $H$ such that $H/H'(p)$ is an abelian $p$-group. 
Then using the same argument as above, we may conclude that $H/H'(p) = P / (P \cap H')$.  Since $H$ is the normalizer of the
center of a Sylow $p$-subgroup, we would like to show that $G/G'(p) = H/H'(p)$, or equivalently 
$P \cap G' = P \cap H'$.  Since obviously $P \cap H' \subseteq P \cap G'$, we must prove that
$P \cap G' \subseteq P \cap H'$.

Using the First Theorem of Gr\"un (\ref{thm:grun1}), we know that 
$$P \cap G' = (P \cap N') \cup_{g \in G} (P \cap g^{-1}P'g).$$
We already know that $N \subseteq H$, so $P \cap N' \subseteq P \cap H'$, and it remains to show that
$P \cap g^{-1}P'g \subseteq P \cap H'$ for all $g \in G$.

For a given $g \in G$, let $M = P \cap g^{-1}P'g$.  Then certainly $Z \subseteq N_G(M)$, and since $g^{-1}Zg$ is the center of
$g^{-1}Pg$, we also have $g^{-1}Zg \subseteq N_G(M)$.  Let $R$ be a Sylow $p$-subgroup of $N_G(M)$ containing $Z$, and let
$S$ be a Sylow $p$-subgroup of $N_G(M)$ containing $g^{-1}Zg$.  There is some $y \in N_G(M)$ such that $Z$ and 
$y^{-1}x^{-1}Zxy$ are both in $R$.  Let $Q$ be a Sylow $p$-subgroup of $G$ containing $R$.  Since $G$ is $p$-normal and $Q$ contains $Z(P)$ for a Sylow $p$-subgroup $P$ of $G$, it must be that $Z(P) = Z(Q)$.  
Thus, $Z$ is the center of $Q$ and also $Z = y^{-1}x^{-1}Zxy$, so $xy \in N_G(Z) = H$.  Let $h = xy$.  Then since
$y \in N_G(M)$, 
$$M = y^{-1}My = y^{-1}Py \cap y^{-1}x^{-1}P'xy = y^{-1}Py \cap h^{-1}P'h \subseteq H'.$$
Then $M \subseteq P \cap H'$, and we are done.

\end{proof}

We will also need results of Burnside and Hall.  First, a lemma of Burnside:

\begin{lemma} \label{lem:burn}

Let $G$ be a finite group, and let $h$ be a $p$-subgroup of $G$ for some prime $p$.  Suppose that there exist two
different Sylow $p$-subgroups containing $h$ such that $h$ is normal in one, but not in the other.  
Then there exists a number $r > 1$ relatively prime to $p$ and subgroups $h_i$, $1 \leq i \leq r$ all conjugate
to $h$ such that:

\begin{itemize}

\item All the $h_i$ are normal in the subgroup $H = h_1h_2 \cdots h_r$.
\item There is no Sylow $p$-subgroup in which all of the $h_i$ are normal.
\item The $h_i$ are a complete set of conjugates of each other in $N_G(H)$.

\end{itemize}

\end{lemma}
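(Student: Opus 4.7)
The plan is to construct the subgroups $h_i$ by exploiting the failure of normality in $P_2$ to produce a sequence of conjugates of $h$ lying in a common $p$-subgroup, then selecting a minimal such configuration so the required coprimality and non-normality fall out. First, I would establish a seed pair of conjugates: since $h$ is not normal in $P_2$, the normalizer $N_{P_2}(h)$ is a proper subgroup of $P_2$, and by the standard fact that normalizers in $p$-groups strictly grow, there exists $g \in P_2$ normalizing $N_{P_2}(h)$ but not $h$ itself. Setting $h_1 = h$ and $h_2 = ghg^{-1}$, a direct calculation shows that $h_1 \ne h_2$, that both lie in $N_{P_2}(h)$, and that both are normal in $N_{P_2}(h)$, hence in the $p$-subgroup $\langle h_1, h_2 \rangle$.

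Next, I would consider the collection $\mathcal{F}$ of all $p$-subgroups $H^* \subseteq G$ expressible as the join of a set $T$ of $G$-conjugates of $h$, with $h \in T$, such that every element of $T$ is normal in $H^*$ and $T$ is exactly the $N_G(H^*)$-orbit of $h$. This collection is nonempty and contains elements with $|T| > 1$: starting from $\{h_1, h_2\}$ above and iteratively adjoining any $N_G$-conjugates of $h$ not yet included gives a process that terminates by finiteness of $G$. Among all $H^* \in \mathcal{F}$ with $|T| > 1$, take $H$ of minimal order and write $T = \{h_1, \ldots, h_r\}$. With this choice the first and third bullets of the conclusion (each $h_i \triangleleft H$, and the $h_i$ forming the full $N_G(H)$-conjugacy class of $h_1$) are built directly into the definition of $\mathcal{F}$.

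For the opening clause $r > 1$ coprime to $p$, minimality does the work: $r > 1$ is by choice, and if a Sylow $p$-subgroup $S$ of $N_G(H)$ did not fix $h_1$ under conjugation, the $S$-orbit of $h_1$ would be a proper $p$-power-size subset of $T$, and restricting to the subgroup generated by that orbit would yield a smaller element of $\mathcal{F}$, contradicting minimality. The second bullet, forbidding any Sylow $p$-subgroup $Q$ of $G$ from normalizing every $h_i$, is the main obstacle: such a $Q$ would satisfy $Q \subseteq N_G(H)$ and hence be a Sylow $p$-subgroup of $N_G(H)$ fixing each $h_i$ under conjugation; a further minimality argument, combined with the asymmetry between $P_1$ and $P_2$ in the hypothesis, should yield the desired contradiction. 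The delicate part is calibrating the minimality condition so that a hypothetical Sylow normalizing every $h_i$ really contradicts the choice of $H$, essentially by exploiting that $P_2 \cap N_G(H)$ must act nontrivially on $\{h_1, \ldots, h_r\}$.
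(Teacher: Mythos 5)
Your proposal has genuine gaps at each of the three places where the real work happens, and they share a common cause: the minimality-over-$|H|$ scheme does not substitute for the key device in the actual proof, which is to choose, among all Sylow $p$-subgroups of $G$ containing $h$ non-normally, one (call it $Q$) that maximizes the intersection $D = N_G(h)\cap Q$, and then to take as the $h_i$ the conjugates of $h$ inside $N_G(D)$. Concretely: (i) your family $\mathcal{F}$ is not shown to contain any member with $|T|>1$. The ``iteratively adjoining'' process is not monotone and can collapse: the element $g$ normalizes $N_{P_2}(h)$, not $\langle h_1,h_2\rangle$, so $g$ need not lie in $N_G(\langle h_1,h_2\rangle)$, and the $N_G(\langle h_1,h_2\rangle)$-orbit of $h$ need not contain $h_2$ at all. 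The terminal object of your process could well be $T=\{h\}$, $H=h$, which always lies in $\mathcal{F}$ but is useless. (ii) The coprimality argument fails on three counts: the $S$-orbit of $h_1$ under a Sylow $p$-subgroup $S$ of $N_G(H)$ need not be a \emph{proper} subset of $T$ (if $r$ were a $p$-power, $S$ could act transitively); the subgroup generated by that orbit need not have smaller order than $H$; and, most importantly, that subgroup need not belong to $\mathcal{F}$, since its own normalizer in $G$ is a different group whose orbit of $h$ can differ from the $S$-orbit. So no contradiction with minimality is obtained. (iii) The second bullet --- the heart of the lemma --- is explicitly left open (``should yield the desired contradiction''), and nothing in your setup supplies the needed witness: you have no distinguished $p$-subgroup of $N_G(H)$ that provably fails to normalize $h$.

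For comparison, the maximal choice of $D$ delivers all three conclusions at once. One shows $h\subsetneq D\subsetneq N_Q(D)\subseteq Q$, so $h$ is not normal in $N_G(D)$, giving $s>1$ conjugates of $h$ in $N_G(D)$, each normal in $D$ and hence in $H\subseteq D$; one then shows that a Sylow $p$-subgroup $p_2$ of $N_G(H)$ containing a Sylow $p$-subgroup of $N_G(h)\cap N_G(D)$ lies in a Sylow $p$-subgroup $P$ of $G$ that must normalize $h$ (else $P\cap N_G(h)\supsetneq D$ contradicts maximality), which places a full Sylow $p$-subgroup of $N_G(H)$ inside the stabilizer of $h$ and forces $p\nmid r$ by orbit--stabilizer; and finally $N_Q(D)$ is a $p$-subgroup of $N_G(H)$ that does not normalize $h$, which rules out any Sylow $p$-subgroup of $G$ normalizing every $h_i$. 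Without some analogue of that maximal intersection, your construction establishes only the two bullets you correctly observe are built into the definition of $\mathcal{F}$, and not the existence of a suitable $H$ nor the quantitative and non-normality claims.
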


The proof is adapted from \cite[p.46]{hall}

\begin{proof}

Since we will use many normalizers in this proof, we will temporarily use the notation $N_h$ to refer to
$N_G(h)$ for concision.  Also, lower case letters (except p) in this proof 
will refer to groups unless otherwise specified for
clarity and ease of notation.    

By assumption at least one Sylow $p$-subgroup of $G$ contains $h$ non-normally.  Among all Sylow $p$-subgroups 
containing $h$ non-normally, choose $Q$ to be one such that $D = N_h \cap Q$ is as large as possible.  Let
$N_Q(D)$ denote the normalizer of $D$ in $Q$, and note that $N_D$ denotes the normalizer of $D$ in $G$. 
Now there is some subgroup of $q \subseteq Q$ containing $h$ such that $|q : h| = p$, which would imply
$h \triangleleft q$, so that $h$ is properly contained $D$. 
Since also $h$ is not normal in $Q$ by assumption, $D$ is properly contained in $Q$.  Now $D$ is a proper subgroup
of the $p$-group $Q$, so it is properly contained in its normalizer in $Q$.  Altogether, we have the relations
$h \subsetneq D \subsetneq N_Q(D) \subseteq Q$.  

Now, $D$ clearly contains $N_Q(h)$, and $D \subsetneq N_Q(D)$, so
$h$ cannot be normal in $N_Q(D)$, and so certainly not in $N_D$.  Thus, there are $s > 1$ conjugates of $h$ in
$N_D$, say $h = h_1, h_2, \dots, h_s$.  Since $h \triangleleft D$, each $h_i \triangleleft D$, and
$H = h_1 \cup \dots \cup h_s \subseteq D$.  Also, elements of $G$ normalizing $D$ certainly map $H$ back to itself, so
$N_D \subseteq N_H$.

Now let $p_1$ denote a Sylow $p$-subgroup of $N_h \cap N_D$, and let $P_1$ be a Sylow $p$-subgroup of $N_h$ that 
contains $p_1$.  It must be that $P_1$ is also a Sylow $p$-subgroup of $G$, 
since there is at least one Sylow $p$-subgroup of $G$ that contains $h$ as a normal subgroup.  
Now, certainly $D$ is properly contained in $P_1$, so that $D$ is not its own normalizer in $P_1$.  We wish to show also
that $D$ is properly contained in $p_1$.  Suppose to the contrary that $D = p_1$.  Let $q_1$ be a Sylow $p$-subgroup of $P$ that properly contains and normalizes $D = p_1$.  
Then $q_1$ is a $p$-group in $N_h \cap N_D$ properly containing $p_1$, contradicting the definition of $p_1$ as a Sylow $p$-subgroup.  It follows that in fact $D$ must be properly contained in $p_1$.  
Furthermore, $N_h \cap N_D \subseteq N_D \subseteq N_H$, so let $p_2$ be a Sylow $p$-subgroup
of $N_H$ containing $p_1$, and let $P$ be a Sylow $p$-subgroup of $G$ containing $p_2$.  Suppose $P$ does not normalize
$h$.  Then $D \subsetneq p_1 \subseteq P \cap N_h$, but this would contradict our choice of $D$ as the maximal intersection
of $N_h$ with a Sylow $p$-subgroup not normalizing $h$.

It follows that $P \subseteq N_h$, and $P \cap N_H \subseteq N_h \cap N_H$, but as $p_2 \subseteq P$ and
$p_2$ is a Sylow $p$-subgroup of $N_H$, $p_2 = P \cap N_H$.  Now, let $h = h_1,h_2, \dots, h_s, \dots, h_r$ be a 
complete set of conjugates of $h$ in $N_H$.  Note that as $h$ is normal in $H$, 
each $h_i$ is contained in and normal in $H$.  Since the 
normalizer of $h$ in $N_H$ is equivalent to $N_H \cap N_h$, the number of conjugates is
$|N_H: N_H \cap N_h|$.  But since $N_H \cap N_h$ contains $p_2$ as a Sylow $p$-subgroup, it must be that
$p$ does not divide $|N_H : N_H \cap N_h|$, so $r$ is relatively prime to $p$.  Also $r \geq s > 1$ as was
proven before.

It remains to show that there is no Sylow $p$-subgroup of $G$ in which all of the $h_i$, $1 \leq i \leq r$ are
normal.  Suppose there were such a Sylow $p$-subgroup $S_p$.  Then by definition 
$S_p \subseteq N_H$, so every Sylow $p$-subgroup of $N_H$ would contain and normalize the $h_i$.  But 
$N_Q(D) \subseteq N_D \subseteq N_H$ is a $p$-group of $N_H$ which fails to normalize $h = h_1$ as previously
noted.                  

\end{proof}

Next, a pair of theorems based on the work of Burnside and Philip Hall, as shown in
\cite[p.176]{hall}:

\begin{theorem}[Burnside Basis Theorem] \label{thm:basis}

Let $P$ be a $p$-group of order $p^n$, and let $D$ be the intersection of the maximal subgroups of $P$ (also
known as the Frattini subgroup of $P$).  Then the quotient group $P/D = A$ is an elementary
abelian group, say of order $p^r$, so is isomorphic to $\mathbb{F}_p^r$.  Furthermore, every 
set of elements $\{z_1,z_2,\dots,z_s\}$ that generates $P$ contains a subset of $r$ elements
$\{x_1,x_2,\dots,x_r\}$ which generates $P$.  In the natural projection homomorphism 
$P \rightarrow P/D = A$, the set $\{x_1,\dots,x_r\}$ is mapped to a basis 
$\{a_1,\dots,a_r\}$ of $A$, and conversely, any set of $r$ elements of $P$ that are projected
onto a set of generators of $A$ must be a generating set for $P$.    

\end{theorem}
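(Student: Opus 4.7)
The plan is to establish the structural description of $D$ first, and then deduce the statements about generating sets. I will proceed as follows.

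First I would show that every maximal subgroup $M$ of $P$ has index $p$ and is normal. Because $P$ is a $p$-group (hence nilpotent by Theorem~\ref{thm:nilpotent-prop}), any proper subgroup is properly contained in its normalizer, so a maximal subgroup must be normal. Then $P/M$ is a $p$-group with no nontrivial proper subgroup, hence of order $p$. Consequently $P/M$ is abelian of exponent $p$, which forces $M \supseteq P'$ and $M \supseteq P^p := \langle x^p : x \in P\rangle$. Intersecting over all maximal $M$, this gives $D \supseteq P' P^p$.

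Next I would show the reverse inclusion by viewing $V := P/(P'P^p)$ as a vector space over $\mathbb{F}_p$ (it is abelian since we killed the commutators, and has exponent $p$ since we killed the $p$-th powers). The maximal subgroups of $P$ containing $P'P^p$ correspond bijectively to the maximal (i.e.\ codimension-one) subspaces of $V$, and in a finite-dimensional $\mathbb{F}_p$-vector space the intersection of all hyperplanes is the zero subspace. Any maximal subgroup of $P$ already contains $P'P^p$ by the previous paragraph, so this intersection is exactly $D/(P'P^p)$, giving $D = P'P^p$. In particular $A = P/D$ is elementary abelian, say of order $p^r$, so $A \cong \mathbb{F}_p^r$.

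For the generating-set statements, suppose $\{z_1,\dots,z_s\}$ generates $P$. Under the projection $\pi : P \to A$ the images $\pi(z_i)$ generate $A$ as an $\mathbb{F}_p$-vector space, so some subset of $r$ of them, say $\{\pi(x_1),\dots,\pi(x_r)\}$, forms a basis of $A$. I then claim that the corresponding $\{x_1,\dots,x_r\}$ already generates $P$; this is the one step requiring real care. Let $H = \langle x_1,\dots,x_r\rangle$. Since $\pi(H) = A$, we have $HD = P$. If $H \subsetneq P$, then $H$ would be contained in some maximal subgroup $M$ of $P$; but $D \subseteq M$ by definition of $D$, so $HD \subseteq M$, contradicting $HD = P$. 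Hence $H = P$. The converse statement — that any $r$ elements of $P$ projecting onto a basis of $A$ generate $P$ — is exactly this same argument, applied with those $r$ elements in place of the selected $x_i$.

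The only genuinely delicate step is the reduction $D = P'P^p$: showing $\supseteq$ requires knowing maximal subgroups have prime index (which uses nilpotency of $p$-groups), and showing $\subseteq$ requires the vector-space fact that hyperplanes in $\mathbb{F}_p^r$ intersect trivially. Everything else then follows from the non-generator property of the Frattini subgroup, which is a direct consequence of the definition.
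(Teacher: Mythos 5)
Your proposal is correct and follows essentially the same route as the paper: establish that every maximal subgroup of $P$ is normal of index $p$, deduce that $D$ contains all commutators and $p$-th powers so that $A=P/D$ is elementary abelian, and then run the non-generator argument (a proper subgroup surjecting onto $A$ would sit inside a maximal subgroup containing $D$, a contradiction). The only departures are cosmetic: you get normality of maximal subgroups from the normalizer condition of Theorem~\ref{thm:nilpotent-prop} rather than the paper's induction on $|P|$ via a central element of order $p$, and you prove the exact equality $D = P'P^p$ via the hyperplane-intersection argument, of which only the inclusion $D \supseteq P'P^p$ is actually needed for the theorem.
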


\begin{proof}

First, we wish to show that every maximal subgroup of a $p$-group $P$ has index
$p$ and is normal in $P$.  We proceed by induction on the size of the
$p$-group.  In the base case, every group of order $p$ has the trivial group as the only maximal subgroup, and it is certainly normal and of index $p$. 
Now, let $M$ be some maximal subgroup of $P$, and by the inductive hypothesis, suppose that every proper subgroup and quotient group of $P$ satisfies this property.  Let $r$
be an element of order $p$ in $Z(G)$ (such an element must exist since centers of $p$-groups
are nontrivial), and let $R$ be the subgroup generated by $r$.  If $r \in M$, then
$M/R$ is a maximal subgroup of $P/R$, so the inductive hypothesis on $P/R$ tells us that
$M/R$ is normal and of index $p$ in $P/R$, and so $M$ is normal and of index $p$ in $P$.  
If $r \not\in M$, then $MR$ is a larger subgroup than $M$, so by maximality
$MR = P$, but then it is immediate that $M \triangleleft P$ and $M$ is of index $p$.          

Thus, for every maximal subgroup $M$ of $P$, $P/M$ is cyclic of order $p$, so the $p^{th}$ power of
every element of $P$ is in $M$, as is every commutator.  It follows that $D$, the intersection
of all maximal subgroups of $P$, contains every $p^{th}$ power and every commutator in $P$.  
It follows that $P/D$ is abelian, and every element of $P/D$ has order $p$, so $P/D$ is an
elementary abelian group $A$.  

Let $|A| = p^r, r \leq n$.  Then viewing $A$ as isomorphic to $\mathbb{F}_p^r$, 
any basis of $A$ consists of $r$ elements, say $\{a_1,\dots,a_r\}$, and clearly any set of more
than $r$ elements generating $A$ contains a subset of $r$ of them which also generate $A$.  

Let $\{z_1,\dots,z_s\}$ be some set of elements generating $P$.  In the projection
$P \rightarrow P/D = A$, let $z_i$ be mapped to $b_i$ for $1 \leq i \leq s$.  
The set of $b_i$ must generate $A$, so $s \geq r$, 
and some subset $\{a_1,\dots,a_r\}$ also generates $A$.
Let $\{x_1,\dots,x_r\}$ be the corresponding preimages 
from among the $z_i$.  

We wish to show that the elements $x_i$ generate $P$.  
Let $X$ be the subgroup of $P$ generated by the $x_i$. Suppose that $X \neq P$.  Then $X$ is
contained in a maximal subgroup $M$ of $P$.  Then in the projection $P \rightarrow P/D$,
we have that $X \rightarrow X/D \subseteq M/D = B$, where $B$ is some subgroup of $A$ of order
$p^{r-1}$, but this contradicts that the images $a_i$ generate all of $A$.  Thus, $X = P$, and the rest of the theorem immediately follows.   
  
\end{proof}

We use this to prove Philip Hall's theorem on automorphisms of $p$-groups:

\begin{theorem} \label{thm:auto}

Let $P$ be a group of order $p^n$, $D$ the intersection of the maximal subgroups of $P$,
and $|P : D| = p^r$.  Let $A(P)$ be the group of automorphisms of $P$, where the order of an automorphism $\alpha$ is the least $k$ such that $\alpha^k$ is the identity.  Then
$|A(P)|$ divides $p^{r(n-r)}\theta(p^r)$, where
$$\theta(p^r) = (p^r - 1)(p^r - p) \cdots (p^r-p^{r-1}).$$

\end{theorem}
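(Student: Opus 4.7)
The plan is to construct a natural homomorphism from $A(P)$ into $A(P/D)$, use the Burnside Basis Theorem (Theorem \ref{thm:basis}) to bound both the image and the kernel, and combine via the identity $|A(P)| = |\ker| \cdot |\mathrm{Im}|$.

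First, since $D$ is the intersection of all maximal subgroups of $P$ and any automorphism of $P$ permutes these maximal subgroups, $D$ is characteristic in $P$. Hence every $\alpha \in A(P)$ descends to a well-defined automorphism $\bar{\alpha}$ of $P/D$, giving a homomorphism $\phi: A(P) \to A(P/D)$. By Theorem \ref{thm:basis}, $P/D$ is elementary abelian of order $p^r$, so $A(P/D) \cong GL_r(\mathbb{F}_p)$; a standard column-by-column count shows $|GL_r(\mathbb{F}_p)| = (p^r - 1)(p^r - p) \cdots (p^r - p^{r-1}) = \theta(p^r)$. Therefore $|\mathrm{Im}(\phi)|$ divides $\theta(p^r)$.

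Next I would bound $K = \ker \phi$, the subgroup of automorphisms satisfying $\alpha(g) \in gD$ for all $g \in P$. Pick generators $\{x_1, \ldots, x_r\}$ of $P$ projecting to a basis of $P/D$, which exist by Theorem \ref{thm:basis}. Each $\alpha \in K$ is determined by its values $\alpha(x_i) = x_i d_i$ with $d_i \in D$, and the converse direction of Theorem \ref{thm:basis} guarantees that any such lift $x_i d_i$ still generates $P$, so $\alpha$ is at least well-defined on generators. This gives the naive cardinality bound $|K| \le |D|^r = p^{r(n-r)}$.

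The main obstacle is upgrading this inequality to divisibility, which reduces to showing that $K$ is actually a $p$-group. I would invoke the following standard consequence of coprime action: if $\alpha \in A(P)$ has order coprime to $p$ and acts trivially on $P/\Phi(P)$, then $\alpha = \mathrm{id}$. Briefly, setting $H = \langle \alpha \rangle$, the hypothesis gives $[P,H] \subseteq D = \Phi(P)$; the coprime-action decomposition for a group of coprime order acting on a nilpotent group yields $P = [P,H] \cdot C_P(H) \subseteq \Phi(P) \cdot C_P(H)$, and since elements of $\Phi(P)$ are non-generators this forces $P = C_P(H)$, so $\alpha$ is trivial. Hence $K$ contains no nontrivial elements of order prime to $p$, so $|K|$ is a power of $p$ at most $p^{r(n-r)}$, and therefore $|K|$ divides $p^{r(n-r)}$. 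Multiplying with the image bound yields $|A(P)| = |K| \cdot |\mathrm{Im}(\phi)|$ divides $p^{r(n-r)} \theta(p^r)$, as claimed.
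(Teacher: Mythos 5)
Your proof is correct, but it takes a genuinely different route from the paper's. The paper argues by counting: there are exactly $p^{r(n-r)}\theta(p^r)$ ordered $r$-tuples $(x_1,\dots,x_r)$ generating $P$ and projecting to bases of $A = P/D$ (namely $\theta(p^r)$ choices of basis times $p^{n-r}$ lifts of each basis vector), the group $A(P)$ permutes these tuples, and the action is free---an automorphism fixing a generating set pointwise is the identity---so every orbit has size $|A(P)|$ and the divisibility follows from orbit counting. You instead split $|A(P)| = |K|\cdot|\mathrm{Im}(\phi)|$ along the induced map $\phi: A(P) \to A(P/D) \cong GL_r(\mathbb{F}_p)$, bound the image by $\theta(p^r)$, and handle the kernel by combining the cardinality bound $|K| \le |D|^r = p^{r(n-r)}$ with Burnside's theorem that a $p'$-automorphism acting trivially on $P/\Phi(P)$ is trivial, so that $K$ is a $p$-group and the inequality upgrades to divisibility. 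Both arguments are sound. The trade-off is this: your decomposition yields strictly more information (the kernel of the action on the Frattini quotient is a $p$-group whose order divides $p^{r(n-r)}$, and the two divisibilities hold separately), but it imports the coprime-action identity $P = [P,H]\,C_P(H)$, a nontrivial external ingredient the paper never develops; the paper's counting argument needs nothing beyond the Burnside Basis Theorem and orbit--stabilizer, and in particular obtains the divisibility in one stroke without ever needing to know that the kernel is a $p$-group.
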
 

\begin{proof}

From Theorem \ref{thm:basis}, we know that $P/D = A$ is an elementary abelian $p$-group,
of order $p^r$, so the number of ways to choose a basis of $A$ is
$(p^r-1)(p^{r-1}-1)\dots(p-1) = \theta(p^r)$.  Let $\{a_1,\dots,a_r\}$ be a fixed basis of $A$ . 
Then any mapping of this basis to another basis of $A$ yields an automorphism of $A$, but
since every automorphism of $A$ must map the set $\{a_1,\dots,a_r\}$ to some basis, there must
be exactly $\theta(p^r)$ automorphisms of $A$.

We wish to choose $X = \{x_1,\dots,x_r\}$ to be
an ordered set of $r$ elements that generate $P$.  In the
projection $P \rightarrow A$, we may choose a basis $\{a_1,\dots,a_r\}$ for $A$ to map $X$
onto in $\theta(p^r)$ ways, and for each $a_i$, there are $p^{n-r}$ 
choices of $x_i$ as an element in the
coset of $D$ corresponding to $a_i$.  Thus, for each $x_i$, there are $p^{n-r}$ choices,
and we make such a choice $r$ times, so there are $(p^{n-r})^r$ total choices of the $x_i$ after
picking a basis, so there are $p^{r(n-r)}\theta(p^r)$ total choices of the set $\{x_1,\dots,x_r\}$.
But the group $A(P)$ of automorphisms is a permutation group on the set of $X$s, so its order must
divide $p^{r(n-r)}\theta(p^r)$ as desired.  

\end{proof}

Finally, we will use the Second \Grun theorem and Philip Hall's automorphism theorem to 
prove the following theorem of Frobenius \cite[p. 143]{zassenhaus}:

\begin{theorem} \label{thm:frob}

Let $G$ be a group of order $n$, and let $p^a$ be the largest power of a prime number $p$ that divides $n$.
Suppose that $(n, \psi(p^a)) = 1$.  Then the maximal $p$-factor group of $G$ is isomorphic to every 
Sylow $p$-subgroup of $G$.

\end{theorem}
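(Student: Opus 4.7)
The plan is to proceed by strong induction on $n = |G|$, aiming in the inductive step to produce a normal subgroup $K \triangleleft G$ of order $n/p^a$. Such a $K$ suffices because then $|G/K| = p^a$ and the natural map $P \hookrightarrow G \twoheadrightarrow G/K$ is an injection (as $|K|$ is coprime to $p$) between finite sets of equal cardinality, giving $G/K \cong P$; this $G/K$ is then automatically the maximal $p$-factor group. The hypothesis is inherited by subgroups and quotients: for any $H \leq G$ with Sylow $p$-subgroup of order $p^b$, both $|H| \mid n$ and $\psi(p^b) \mid \psi(p^a)$, so $(|H|, \psi(p^b)) = 1$.

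The first step is to show that for every $p$-subgroup $h \leq G$, $N_G(h)/C_G(h)$ is a $p$-group. Hall's automorphism theorem \ref{thm:auto} gives that $|\mathrm{Aut}(h)|$ divides $p^{r(b-r)}\theta(p^r)$ where $|h| = p^b$ and $p^r = |h : D(h)|$; rewriting $\theta(p^r) = p^{r(r-1)/2}\psi(p^r)$ and noting $\psi(p^r) \mid \psi(p^a)$, we see $|\mathrm{Aut}(h)|$ divides $p^s\psi(p^a)$ for some $s$. Since $N_G(h)/C_G(h) \hookrightarrow \mathrm{Aut}(h)$ has order dividing both $n$ and $p^s\psi(p^a)$, the coprimality hypothesis forces it to be a $p$-group. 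From this I would deduce that $G$ is $p$-normal: suppose $Z(P) \subseteq Q$ for Sylow $p$-subgroups $P, Q$ but $Z(P) \not\triangleleft Q$. Then Burnside's Lemma \ref{lem:burn} applied to $h = Z(P)$ produces conjugates $h_1,\dots,h_r$ of $Z(P)$ forming a complete conjugacy class of size $r > 1$ coprime to $p$ in $N_G(H)$, where $H = h_1 \cdots h_r$. The conjugation action of $N_G(H)$ on $\{h_1,\dots,h_r\}$ factors through $N_G(H)/C_G(H)$, which is a $p$-group by the above, yet its image has order divisible by $r$; contradiction. So $Z(P) \triangleleft Q$, whereupon $P$ and $Q$ are both Sylow $p$-subgroups of $N_G(Z(P))$ and thus conjugate by some $g \in N_G(Z(P))$, giving $Z(Q) = g Z(P) g^{-1} = Z(P)$.

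I then split into cases on whether $Z(P)$ is normal in $G$. If $N_G(Z(P)) \neq G$, write $H = N_G(Z(P))$, so $|H| < |G|$ and $P \leq H$ remains a Sylow $p$-subgroup. Induction gives $H$ a normal $p$-complement, so its maximal abelian $p$-factor group is $P/P'$. By $p$-normality and Grün's Second Theorem \ref{thm:grun2}, the maximal abelian $p$-factor group of $G$ is also $P/P'$, yielding $N_1 \triangleleft G$ with $G/N_1 \cong P/P'$. Since $P' \subsetneq P$, we have $|N_1| < |G|$, and the Sylow $p$-subgroup of $N_1$ is $P'$. A second induction on $N_1$ produces a normal $p$-complement $K$ of order $n/p^a$, which is characteristic in $N_1 \triangleleft G$ and hence normal in $G$. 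If instead $Z(P) \triangleleft G$ with $|Z(P)| = p^c$, then for $c < a$ the quotient $G/Z(P)$ yields by induction a normal $p$-complement $\bar N$, whose preimage $N \triangleleft G$ has order $n/p^{a-c} < n$ and Sylow $p$-subgroup $Z(P)$; inducting on $N$ then finishes this sub-case exactly as before. For $c = a$, $P = Z(P)$ is abelian and normal; the first step applied to $h = P$ shows $G/C_G(P)$ is a $p$-group, and writing $|G : C_G(P)| = p^d$, a potential normal $p$-complement of $C_G(P)$ would have order $|C_G(P)|/p^a = n/p^{a+d}$, an integer coprime to $p$ only when $d = 0$. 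Hence $C_G(P) = G$, so $P \subseteq Z(G)$, and Burnside's Normal Complement Theorem \ref{thm:burn} finishes the case.

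The chief obstacle is the deduction of $p$-normality: it is the only place Burnside's Lemma \ref{lem:burn} enters, and one must carefully exploit that the $N_G(H)$-action on the forced conjugacy class $\{h_1,\dots,h_r\}$ has image of both $p$-power order and order divisible by the $p'$-number $r$. A secondary subtlety arises in the $c = a$ sub-case, where the inductive descent collapses and one must instead leverage the coprimality hypothesis to force $P \subseteq Z(G)$ before invoking Burnside's transfer argument.
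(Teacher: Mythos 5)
Your proof is correct. It draws on exactly the same arsenal as the paper --- Burnside's Lemma \ref{lem:burn} combined with Hall's automorphism bound \ref{thm:auto} to establish $p$-normality, Gr\"un's Second Theorem \ref{thm:grun2} to pass between $G$ and $N_G(Z(P))$, and Burnside's Normal Complement Theorem \ref{thm:burn} at the abelian endpoint --- but the scaffolding is reorganized in a genuinely different way. The paper inducts on the exponent $a$ and splits on whether $P$ is abelian; in the non-abelian case it applies the inductive hypothesis to $N_G(Z)/Z$, transfers the resulting nontrivial abelian $p$-factor group to $G$ via Gr\"un, and closes with the observation that $D(G)=D(D(G))$, which forbids $p$ from dividing $|D(G)|$. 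You induct on $|G|$, split on whether $Z(P)$ is normal in $G$, and build the normal $p$-complement by explicit descent: down to $N_1=G'(p)$ (respectively to the preimage of the complement of $G/Z(P)$), a second application of the inductive hypothesis, and the fact that a normal $p$-complement is characteristic (being the set of elements of order prime to $p$) to push normality back up to $G$. Your route buys a concrete construction of the complement, avoids the slick $D(D(G))$ step, and isolates the coprimality computation once and for all in the clean statement that $N_G(h)/C_G(h)$ is a $p$-group for every $p$-subgroup $h$, which the paper in effect proves twice (once for $p$-normality, once for the abelian-Sylow case); the price is more bookkeeping, since you must verify that the Sylow $p$-subgroup and the hypothesis $(|H|,\psi(p^b))=1$ survive two or three successive descents. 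The individual steps all check: the maximal abelian $p$-factor group of $N_G(Z(P))$ is indeed $P/P'$ once that normalizer has a normal $p$-complement, $P\cap N_1=P'$, and in the central case $P\subseteq C_G(P)$ forces $|G:C_G(P)|=1$ by Lagrange.
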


\begin{proof}

We proceed by induction on the power $a$, so the inductive hypothesis is that for a fixed $a$ 
we assume that every group whose Sylow $p$-subgroup is of order less than $p^a$ satisfies the theorem.  In
the base case where $a = 0$, both are the trivial group $\{e\}$ since no proper subgroup of $G$ 
can have $p$-power index.

Now assume the inductive hypothesis for all $k < a$, and consider a group $G$ with Sylow $p$-subgroup of order 
$p^a$.  We first prove that $G$ is $p$-normal.  If not, there exists a Sylow $p$-subgroup $P$ 
with center $Z(P) \triangleleft P$ and another Sylow $p$-subgroup $Q$ containing $Z(P)$ such that
$Z(P)$ is not normal in $Q$.  Then by Lemma \ref{lem:burn}, there exists a complete set 
of $q > 1$ ($q$ relatively prime to $p$) conjugates 
$Z_1 = Z(P), Z_2, \dots, Z_q$ of $Z(P)$ all normal in $Z_1\cdots Z_q$, 
such that elements in the normalizer of $Z_1\cdots Z_q$ acts transitively on the $Z_i$.  Thus, 
$q$ is the order of an orbit under an action of the normalizer of $Z_1 \cdots Z_q$, so it is also
a divisor of the order of that normalizer, and thus also of $n$.  Furthermore, $q$ is a divisor of
the order of the group of automorphisms of $Z_1 \cdots Z_q$, 
but this is a $p$-group, say of order $p^c$.  
By Hall's Theorem on automorphisms (\ref{thm:auto}), since $q$ is relatively prime to $p$,
it must divide $p^c-1$, and then also $\psi(p^c)$, 
and further $\psi(p^{a})$.  Since $q$ must divide $n$ and $\psi(p^a)$, and
$(n,\psi(p^a)) = 1$ by assumption, we must have $q = 1$, a contradiction.  Thus, $G$ is $p$-normal.

Now, let $P$ be a Sylow $p$-subgroup of $G$.  Suppose that $P$ is abelian.  Then transforming $P$ by
elements of its normalizer induces an automorphism whose order divides $\psi(p^a)$
(by Theorem \ref{thm:auto}) and $n$, so this order
must be 1.  Then $P \subseteq Z(N_G(P))$, and we may apply Burnside's Theorem to complete the proof.

From now on, we may assume $P$ is not abelian.  
Let $Z = Z(P)$.  Then $Z \neq P$, and since $P$ is a $p$-group
$Z \neq \{e\}$, so we may apply our inductive hypothesis
to the group $N_G(Z)/Z$, because this is a group whose Sylow
$p$-subgroup is certainly of lower order than that of $G$.  
We may conclude then that the maximal $p$-factor group of
$N_G(Z)/Z$ is isomorphic to one of its Sylow $p$-subgroups, and in particular is nontrivial.  
By lifting this from $N_G(Z)/Z$ to $N_G(Z)$, we may see that the maximal $p$-factor group of $N_G(Z)$ is nontrivial.  It follows that $N_G(Z)$ has a nontrivial abelian $p$-factor group.
By the Second Theorem of \Grun (\ref{thm:grun2}), the maximal abelian $p$-factor group of 
$G$ is isomorphic to that of $N_G(Z)$, so in particular $G$ has a nontrivial $p$-factor group.  Let $D(G)$ be the intersection of all normal subgroups of $G$ with $p$-power index, 
so that the maximal $p$-factor group of $G$
is $G/D(G)$.  We have shown that $G/D(G)$ is nontrivial.  
If $G/D(G)$ is not isomorphic to some Sylow $p$-subgroup of $G$, then
it must be that $p$ divides $|D(G)|$.  
By the inductive hypothesis, it would follow that $D(G)$ has a maximal $p$-factor group isomorphic to one of its Sylow
$p$-subgroups, so that $D(G)/D(D(G))$ is nontrivial.  But this is absurd, as clearly it must be the case that
$D(G) = D(D(G))$.  Thus, $G/D(G)$ must be isomorphic to a Sylow $p$-subgroup of $G$ as desired.          

\end{proof}

We use this Frobenius theorem \ref{thm:frob} and Lemma \ref{lem:max} to prove the following: 

\begin{lemma} \label{lem:n1}

Let $G$ be a group of order $n$, where $n = n_1n_2$, $(n_1, n_2) = 1$, and $(n, \psi(n_1)) = 1$.  Then the maximal
nilpotent factor group of $G$ has an order divisible by $n_1$.

\end{lemma}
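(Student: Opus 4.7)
The plan is to reduce to Frobenius's theorem (Theorem~\ref{thm:frob}) one prime at a time and then reassemble using Lemma~\ref{lem:max}. Write $n_1 = p_1^{a_1} p_2^{a_2} \cdots p_s^{a_s}$ in standard form. Because $(n_1, n_2) = 1$, no prime $p_i$ divides $n_2$, so $p_i^{a_i}$ is actually the full Sylow $p_i$-part of $n$. This is the key compatibility between the factorization of $n$ and the factorization of $n_1$, and it is what allows the hypotheses of Theorem~\ref{thm:frob} to be applied with $p = p_i$ and $a = a_i$ for each $i$.

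Next I would check the coprimality hypothesis of Theorem~\ref{thm:frob}. Since $\psi$ is multiplicative on prime powers, $\psi(n_1) = \prod_{i=1}^{s} \psi(p_i^{a_i})$, so each $\psi(p_i^{a_i})$ divides $\psi(n_1)$. From $(n, \psi(n_1)) = 1$ we immediately get $(n, \psi(p_i^{a_i})) = 1$ for every $i$. Theorem~\ref{thm:frob} then applies and yields, for each $i$, that the maximal $p_i$-factor group $G/A_i$ of $G$ is isomorphic to a Sylow $p_i$-subgroup of $G$, and in particular has order exactly $p_i^{a_i}$.

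Finally I would invoke Lemma~\ref{lem:max}. Letting $B$ be the intersection defining the maximal nilpotent factor group, we have
$$G/B \;\cong\; \prod_{p \mid n} G/A_p,$$
where the product runs over all primes dividing $n$. Restricting the product to those primes dividing $n_1$ and inserting the orders computed in the previous step gives
$$|G/B| \;=\; \prod_{p \mid n} |G/A_p| \;\geq\; \prod_{i=1}^{s} p_i^{a_i} \;=\; n_1,$$
and more importantly $n_1 = \prod_i p_i^{a_i}$ divides $|G/B|$ since each $p_i^{a_i}$ appears as the order of one of the factors $G/A_{p_i}$. This is the claim.

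The only real obstacle is making sure Theorem~\ref{thm:frob} genuinely applies to each $p_i$ individually, which requires both that $p_i^{a_i}$ is the full Sylow $p_i$-part of $n$ (secured by $(n_1, n_2) = 1$) and that $(n, \psi(p_i^{a_i})) = 1$ (secured by the multiplicativity of $\psi$ and the global hypothesis $(n, \psi(n_1)) = 1$). Once these two elementary verifications are made, Frobenius and Lemma~\ref{lem:max} do all the work, and the rest is bookkeeping on orders.
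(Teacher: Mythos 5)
Your proposal is correct and is essentially the paper's own argument: reduce to each prime power $p_i^{a_i}$ of $n_1$ (using $(n_1,n_2)=1$ to see it is the full Sylow $p_i$-part of $n$ and the multiplicativity of $\psi$ to get $(n,\psi(p_i^{a_i}))=1$), apply Theorem~\ref{thm:frob} to each, and assemble via Lemma~\ref{lem:max}. No gaps.
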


\begin{proof}

Let $F$ denote the maximal nilpotent factor group of $G$, and recall that $F$ is isomorphic to the direct product 
of the maximal $p_i$-factor groups of $G$.  For each prime factor $p_i$ of $n_1$, let $p_i^a$ be the largest power of $p_i$
dividing $n_1$.  Then by assumption $p_i^a$ is the maximal power of $p_i$ dividing $n$, and $(n, \psi(p_i^a)) = 1$, so
the maximal $p_i$-factor group of $G$ has order $p_i^a$.  Therefore, $p_i^a \mid ord(F)$, and applying this to all prime factors
of $n_1$, we have that $n_1 \mid ord(F)$.

\end{proof}

\subsection{Determining the Ordered Sylow numbers}

We are now ready to characterize the ordered Sylow numbers:

\begin{theorem} \label{thm:ordSyl}

Let $n$ be a positive integer with standard prime factorization $p_1^{a_1}p_2^{a_2} \dots p_r^{a_r}$, where 
$p_1 < p_2 < \dots < p_r$.
Then all groups of order $n$ have an ordered Sylow tower $\iff$
$$ (p_i^{a_i} \dots p_r^{a_r}, \psi(p_i^{a_i})) = 1$$
for all $1 \leq i \leq r$.

\end{theorem}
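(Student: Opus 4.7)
The plan is to prove the two implications separately: sufficiency by an iterated application of Frobenius's theorem (Theorem \ref{thm:frob}), and necessity by an explicit construction of a counterexample group.

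For sufficiency, assume $(p_i^{a_i}\cdots p_r^{a_r}, \psi(p_i^{a_i})) = 1$ for all $i$, and let $G$ be any group of order $n$. I will inductively build the normal series $G = A_0 \supset A_1 \supset \cdots \supset A_r = \{e\}$ with $|A_{i-1}:A_i| = p_i^{a_i}$ demanded by Definition \ref{def:ordsyl}(1). Suppose $A_{i-1}$ is already constructed, normal in $G$, of order $p_i^{a_i}\cdots p_r^{a_r}$. The hypothesis at index $i$ is precisely the Frobenius condition $(|A_{i-1}|, \psi(p_i^{a_i})) = 1$, so Theorem \ref{thm:frob} produces a normal subgroup $A_i$ of $A_{i-1}$ of index $p_i^{a_i}$ as the kernel of the surjection onto the maximal $p_i$-factor group. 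Because this kernel equals the intersection of all normal subgroups of $A_{i-1}$ of $p_i$-power index, it is characteristic in $A_{i-1}$; and since a characteristic subgroup of a normal subgroup is normal in the ambient group, $A_i \triangleleft G$, closing the induction.

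For necessity, assume the gcd condition fails at some index $i$, so there is a prime $q$ dividing both $p_i^{a_i}\cdots p_r^{a_r}$ and $\psi(p_i^{a_i})$. Since $\psi(p_i^{a_i})$ is coprime to $p_i$, we must have $q = p_j$ with $j > i$, and $p_j \mid p_i^k - 1$ for some $k$ with $2 \leq k \leq a_i$ (note $k \geq 2$ is forced by $p_j > p_i > p_i - 1$). The order of $GL_k(\mathbb{F}_{p_i})$ is then divisible by $p_j$, so there is a nontrivial semidirect product $E \rtimes C_{p_j}$ with $E \cong C_{p_i}^k$. Form
\[
G \;=\; (E \rtimes C_{p_j}) \,\times\, C_{p_i}^{a_i - k} \,\times\, C_{p_j}^{a_j - 1} \,\times\, \prod_{l \neq i, j} C_{p_l}^{a_l},
\]
a group of order $n$. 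I will show that condition (2) of Definition \ref{def:ordsyl} fails at index $j$: any Hall $\{p_j, p_{j+1}, \ldots, p_r\}$-subgroup of the direct product $G$ decomposes as the product of its intersections with the factors, and its projection to $E \rtimes C_{p_j}$ must be a Sylow $p_j$-subgroup there, which is non-normal because $C_{p_j}$ acts nontrivially on $E$. Non-normality of the projection immediately forces non-normality of the Hall subgroup in $G$.

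The main obstacle is the backward direction: at each step I need normality of $A_i$ in all of $G$, not merely in $A_{i-1}$, and this is only resolved by the observation that the kernel of the maximal $p_i$-factor map is characteristic in $A_{i-1}$. Frobenius's theorem does all the heavy lifting in providing the right subgroup; the forward direction is then a routine verification once the correct indices $i, j, k$ are located and the projection argument for direct products is applied.
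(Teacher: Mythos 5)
Your proof is correct and follows essentially the same route as the paper: sufficiency by iterating Frobenius's theorem to peel off a normal subgroup of index $p_i^{a_i}$ one prime at a time (the paper packages the same step as Lemma \ref{lem:n1}, itself a consequence of Theorem \ref{thm:frob} and Lemma \ref{lem:max}), and necessity via the nontrivial semidirect product $C_{p_i}^{k} \rtimes C_{p_j}$ with $p_j \mid p_i^{k}-1$. Two of your finishing steps are actually tidier than the paper's: you justify $A_i \triangleleft G$ by observing that the kernel of the maximal $p_i$-factor map is characteristic in $A_{i-1}$, where the paper only gestures at coprimality of the indices, and for necessity you verify directly that the Hall $\{p_j,\dots,p_r\}$-subgroup of the order-$n$ direct product cannot be normal, whereas the paper asserts without proof that any group of order $n$ containing a subgroup lacking an ordered Sylow tower must itself lack one.
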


The proof is adapted from \cite[p. 334]{pazderski}.
 
\begin{proof}

We first prove necessity.  Suppose that $n$ is a positive integer that does not satisfy our desired conditions.  
This would mean that $n$ has prime factors $p_j > p_k$ such that $p_j \mid \psi(p_k^{a_k})$.  In particular, this means that
$p_j \mid p_k^m - 1$ for some $1 \leq m \leq a_k$.  We know that this means that we can take the elementary abelian group 
$E = C_{p_k}^m$, and construct a nontrivial semidirect product $G = E \rtimes C_{p_j}$ with order $p_jp_k^m$.  
We established in the characterization of 
nilpotent numbers that this group is not nilpotent.  However, certainly $E$ is a Sylow $p_k$-subgroup of $G$, and $E \triangleleft G$.
We therefore cannot have any Sylow $p_j$-subgroup normal in $G$, else $G$ would be nilpotent.  However, $p_j$ is the largest
prime factor dividing the order of $G$, so if $G$ fails to have a normal Sylow $p_j$-subgroup, it cannot have an ordered 
Sylow tower, and neither could a group of order $n$ containing it.  

Now, suppose that $n = p_1^{a_1}p_2^{a_2} \dots p_r^{a_r}$ satisfies the conditions of the theorem, and let $G$ be a group of order $n$.  Set $G = R_0$.  
Then by Lemma \ref{lem:n1}, there exists a normal subgroup $R_1$ of $R_0$ such that $R_0/R_1$ is nilpotent
and $|R_0 : R_1| = p_1^{a_1}$.  Similarly, there exists $R_2$ a normal subgroup of $R_1$ such that $R_1/R_2$ is nilpotent
and $|R_1 : R_2| = p_2^{a_2}$, and so on.  Furthermore, since all of the indices are relatively prime, all of these 
subgroups are normal in $G$, so we have produced an ordered Sylow tower for $G$.

\end{proof}

\section{Supersolvable Numbers}

In this section, we define and establish basic properties of supersolvable groups,
and then determine a set of criteria on positive integers $n$ such that every group of
order $n$ is supersolvable.  Unlike the criteria established for properties in previous
sections, the set of criteria for $n$ to be a supersolvable number is neither simple to
describe nor particularly intuitive, so our proof of its correctness will be somewhat 
more scattered and require closer attention than the proofs in previous sections.

\subsection{R'edei's First-Order Non-Abelian Groups} \label{sec:redei}

The information in this section is taken from the article 
\emph{Das ``schiefe" Produkt in der Gruppentheorie} (literally \emph{``Skew" Products in Group Theory}) by Laszlo R'edei.

A group is called \emph{first-order non-abelian} if it is not abelian, but every proper subgroup of it is.  R'edei
gave a complete characterization of all finite first-order non-abelian groups in his article in terms of so-called
``skew" products of a group and a finite field.  R'edei gave three examples of these products and demonstrated that
all such groups (except for the quaternion group of order 8) is of one of those three forms.  We will examine his first
skew product, which is the one Pazderski used.  From this point on it will just be referred to as 
\emph{the} skew product for simplicity. 

We define the skew product $RG$ of a group $G$ and a commutative ring $R$ with identity 
to be the set of ordered pairs $(a,\alpha)$, with
$a \in R$ and $\alpha \in G$, where the group operation is defined by 
$(a,\alpha)(b,\beta) = (a+h(\alpha)b, \alpha\beta)$ where $h: G \rightarrow R^{\times}$ is a homomorphism from
$G$ to the multiplicative group of units of $R$.  Thus, $RG$ defined this way consists of the standard group operation
of $G$ in the second coordinate, and the standard addition operation of $R$ in the first coordinate ``skewed" by some
unit corresponding to $\alpha$.  The identity element is $(0,e)$, and the inverse
of $(a,\alpha)$ is $(-h(\alpha)^{-1}a, \alpha^{-1})$.  It is a semidirect product $R^+ \rtimes G$ of the additive group
of $R$ with $G$, and in the case that $h$ is the trivial homomorphism, this skew product is exactly
the direct product $R^+ \times G$.  

In particular, to construct a first-order non-abelian group with this skew product, you should take $G$ to be cyclic of 
order $p^u$ for some choice of prime $p$ and exponent $u$, and you should take $R$ to be the finite field of
$q^v$ elements for some choice of prime $q$, 
where $q^v \cong 1 \text{ mod } p$, and $q^i \not\cong 1 \text{ mod } p$ for any $i < v$ (in general
we will notate this idea that $v$ is the smallest value such that $q^v \cong 1 \text{ mod } p$ 
by saying that $O(q \text{ mod } p) = v$).  We then take the homomorphism $h$ to be any homomorphism from 
$G$ to $R^{\times}$ whose kernel is of index $p$.  

We discuss these groups here because our characterization of the supersolvable numbers 
comes from a larger article by Gerhard Pazderski (\cite{pazderski}) which also derives the above characterization of ordered Sylow numbers and a characterization of metacyclic numbers, and Pazderski
frequently used these R'edei skew products as counterexamples, including once in the supersolvable case. 

We now give some properties of these groups, and the interested reader may refer to the article
for formal proofs.
In his article, R'edei proves that this group
 $RG$ is uniquely determined by $p,q,$ and $u$, and is independent of the 
choice of homomorphism with kernel of index $p$.  He further demonstrates two elements that are noncommutative, and
then proves that all proper subgroups of $RG$ are abelian by showing that any two noncommutative elements
generate the whole group.

Groups of this kind also have a nice presentation.  One can define the group $R(p,q,u)$ to be generated by elements
$a,b_0,\dots,b_{v-1}$ such that 
$$a^{p^u} = b_0^q = \dots = b_{v-1} = e,\ b_ib_j = b_jb_i (i \neq j),$$
$$a^{-1}b_ia = b_{i+1}, 0\leq i \leq v-1,$$
$$a^{-1}b_{v-1}a = b_0^{c_0}\dots b_{v-1}^{c_{v-1}},$$
with the $c_i$ the coefficients of an irreducible factor $x^v - c_{v-1}x^{v-1} - \dots - c_1x - c_0$ of
$\frac{x^p-1}{x-1} \text{ mod } q$, part of the $p$th cyclotomic polynomial mod $q$.  We give this example to
illustrate how Pazderski constructs one of these
groups as well as other
groups with similar presentations as part of his determination of the supersolvable numbers.

\subsection{Preliminaries of Supersolvable Groups}

\begin{definition}

A group $G$ is called \emph{supersolvable} if there exists a normal series
$$\{e\} = R_0 \triangleleft R_1 \triangleleft \dots \triangleleft R_r = G$$ 
such that $R_i \triangleleft G$ for all $0 \leq i \leq r$, and $R_{i+1} / R_i$ is
cyclic for all $0 \leq i \leq r-1$.  

\end{definition}

Supersolvable groups arise naturally in number theory: if $E/F$ is a finite Galois extension of a 
$p$-adic field $F$, then the Galois group of $E/F$ is supersolvable.  

Subgroups and quotient groups of supersolvable groups are supersolvable, and finite
nilpotent groups are supersolvable.  Further, finite supersolvable groups are solvable, 
so for finite groups, supersolvable is a group property between nilpotent and
solvable in strength.  We prove one particular property of supersolvable groups that
we will make use of later:

\begin{lemma} \label{lem:supord}

Every finite supersolvable group has an ordered Sylow tower.

\end{lemma}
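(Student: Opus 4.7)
The plan is to induct on $|G|$. The key move is to establish that in any supersolvable group, the Sylow subgroup $P$ for the largest prime $p$ dividing $|G|$ is normal in $G$; once this is done, $G/P$ is supersolvable of strictly smaller order, so by the inductive hypothesis it has an ordered Sylow tower $G/P = \bar R_0 \supset \cdots \supset \bar R_{r-1} = \{\bar e\}$, and the pullbacks $R_i = \pi^{-1}(\bar R_i)$ along the quotient $\pi \colon G \to G/P$, together with $R_r = \{e\}$, assemble into the required series for $G$ (each $R_i$ is normal in $G$ as the preimage of a normal subgroup, and the indices are correct by the third isomorphism theorem).

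To prove that $P$ is normal, I would first extract a normal subgroup of prime order from the supersolvability hypothesis. Starting from the defining normal series with cyclic factors, I refine it so that every factor has prime order: since subgroups of a cyclic group are characteristic, any intermediate $S$ with $S/R_i$ a subgroup of the cyclic $R_{i+1}/R_i$ is automatically normal in $G/R_i$ (because $R_{i+1}/R_i \triangleleft G/R_i$), hence $S \triangleleft G$. Taking a minimal normal subgroup $N$ of $G$ and choosing the smallest $i$ with $N \cap R_i \neq \{e\}$, minimality forces $N \subseteq R_i$ and $N \cap R_{i-1} = \{e\}$, so $N$ embeds into the prime-order quotient $R_i/R_{i-1}$; thus $N$ is cyclic of some prime order $q$.

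Now let $p$ be the largest prime dividing $|G|$ and $p^a$ its exact power. If $q = p$, then either $a = 1$ and $N$ itself is the Sylow $p$-subgroup, or $a > 1$ and the inductive hypothesis applied to $G/N$ gives a normal Sylow $p$-subgroup $P/N$ of order $p^{a-1}$, lifting to $P \triangleleft G$ of order $p^a$. If instead $q \neq p$, so $q < p$, the inductive hypothesis furnishes $M \triangleleft G$ with $M/N$ the Sylow $p$-subgroup of $G/N$ and $|M| = qp^a$; applying Sylow's theorem inside $M$ gives $n_p \mid q$ and $n_p \equiv 1 \pmod{p}$, and the inequality $q < p$ forces $n_p = 1$. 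The unique Sylow $p$-subgroup of $M$ is then characteristic in $M$, hence normal in $G$, and is a Sylow $p$-subgroup of $G$. The main obstacle is this normality argument in the case $q < p$: supersolvability is used only to produce the prime-order $N$, and the real work is the Sylow counting inside $M$ that leverages $q < p$ to pin down a unique Sylow $p$-subgroup. Once this step is secured, the outer induction on $|G|$ runs routinely.
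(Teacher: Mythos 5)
Your proof is correct, but it is organized quite differently from the one in the paper. The paper works entirely by surgery on the normal series: it first refines the cyclic-quotient series to one whose quotients have prime order (exactly as in your second paragraph), then sorts that series by repeated adjacent swaps --- whenever a step of order $p$ sits above a step of order $q$ with $p<q$, the quotient of order $pq$ has a characteristic subgroup of order $q$, which lets the two steps be interchanged --- and finally merges consecutive equal-prime steps into full Sylow-sized steps. There is no induction on $|G|$ and no explicit Sylow counting. You instead induct on the group order, extract a minimal normal subgroup $N$ of prime order $q$, pull back the normal Sylow $p$-subgroup of $G/N$ to a normal subgroup $M$ of order $qp^a$, and run the congruence $n_p\equiv 1 \pmod p$ together with $n_p\mid q<p$ inside $M$ to pin down a normal Sylow $p$-subgroup of $G$; quotienting by it and inducting again builds the tower from the bottom up. Both arguments ultimately hinge on the same arithmetic fact (a prime $q<p$ cannot be $\equiv 1 \bmod p$), applied by the paper in a group of order $pq$ and by you in one of order $qp^a$. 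The paper's version yields the slightly stronger structural statement that the given cyclic normal series can itself be reordered and condensed into the tower; your version more cleanly isolates the key corollary the paper records afterwards --- that a finite supersolvable group has a normal Sylow subgroup for its largest prime divisor --- and makes it the engine of the whole proof. One small remark: you do not actually need a minimal normal subgroup to start the induction, since the first nontrivial term $R_1$ of your refined series is already a normal subgroup of prime order; but your route through minimal normal subgroups is also valid.
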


This proof is adapted from \cite[p.158]{hall}.

\begin{proof}

First, we demonstrate that if we can choose a normal series for $G$ with successive quotient groups cyclic,
we can in fact choose a normal series in which the successive quotient groups are cyclic of prime order.
Let us suppose that for some $R_i$ in the normal series, $R_{i-1}/R_i$ is cyclic with order 
$p_1p_2\dots p_k$, where $p_1,p_2,\dots,p_k$ are (not necessarily distinct) primes.  Then $R_{i-1}/R_i$ 
has unique, hence characteristic cyclic subgroups $P_1,P_2,\dots,P_{k-1}$ of order 
$p_1,p_1p_2,\dots,p_1p_2\dots p_{k-1}$.  In particular, since $R_{i-1} \triangleleft G$, 
each $P_i$ is normal in $G/R_i$.  The liftings $P_i^*$ by the Correspondence Theorem satisfy 
$R_{i-1} \triangleright P_{k-1}^* \triangleright \dots 
\triangleright P_1^* \triangleright R_i$, each is normal in $G$, and
their successive quotients are cyclic of prime order.  Using this strategy, we can refine every quotient in 
the supersolvable group's normal series, creating a new normal series where every quotient is cyclic of
prime order.

Now we know that we may find a normal series in $G$ of the form 
$G = M_0 \triangleleft M_1 \triangleleft \dots \triangleleft M_n = \{e\}$ such that
$M_{i-1}/M_i$ is of prime order $m_i$.  We prove that we can always choose this series to also 
satisfy $m_{i-1} \geq m_i$ for all $1 \leq i \leq n$.  Suppose we have some index $k$ such that
$M_{k-1}/M_k$ is of prime order $p$, $M_k/M_{k+1}$ is of prime order $q$, and $p < q$.  Then
$M_{k-1}/M_{k+1}$ is of order $pq$ and has a characteristic subgroup of order 
$q$.  This corresponds to a normal subgroup $M_k^* \triangleleft M_{k-1}$ such that $M_{k-1}/M_k^*$ is of
order $q$ and $M_k^*/M_{k-1}$ is of order $p$.  We can repeatedly apply
this whenever $m_{i-1} < m_i$ to obtain a normal series for $G$ in which consecutive quotients have
nonincreasing order.

Finally, if we have a block $R_i \triangleleft R_{i+1} \triangleleft \dots \triangleleft R_{i+k}$ in the normal series
with consecutive quotients of the same prime order $m$, then we may simply remove the intermediate pieces to create a 
single piece $R_i \triangleleft R_{i+k}$ with quotient group of order $m^k$, which is the largest power of $m$ dividing
$|G|$ by the above construction.  Condensing the series in this manner 
for all distinct prime divisors of $|G|$ we obtain an ordered Sylow tower for $G$ as desired.       

\end{proof}

As an immediate consequence of the above proof, every supersolvable group $G$ has a normal subgroup of order equal to
the largest prime factor dividing $|G|$. 

\subsection{Describing the Criteria for Supersolvable Numbers}

We will devote this section to gaining a thorough understanding of the conditions for
a positive integer $n$ to be a supersolvable number, as given in \cite[p.335]{pazderski}.  Recall that $\psi$ is
the multiplicative function defined on prime powers as
$\psi(p^k) = (p^k-1)(p^{k-1}-1)\dots(p-1)$.

\begin{theorem}

Let $n$ be a positive integer with standard prime factorization $p_1^{a_1}p_2^{a_2} \dots p_r^{a_r}$, where 
$p_1 < p_2 < \dots < p_r$.  Then every group of order $n$ is supersolvable if and only if:

(1) For all $1 \leq i \leq r$, the distinct prime factors of $(n,\psi(p_i^{a_i}))$ are the
same as those of $(n,p_i-1)$.

(2) If there exists $i \neq k$ such that $p_i \leq a_k$ (i.e. the value of some prime factor of $n$ is
less than the multiplicity of another), then

	(a) There does not exist a prime $p_j$ such that $p_i | p_j-1$ and $p_j | p_k-1$, and
	
	(b) $a_i \leq 2$, and if $a_i = 2$, then $p_i^2 | p_k - 1$.

\end{theorem}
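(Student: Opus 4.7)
The plan is to prove both directions separately, with necessity done by explicit construction of non-supersolvable groups of order $n$ whenever any of the listed conditions fails, and sufficiency done by a minimal counterexample argument that reduces to analyzing how Sylow subgroups sit inside an ordered Sylow tower. A useful preliminary observation is that condition (1) already forces $n$ to be an ordered Sylow number in the sense of Theorem \ref{thm:ordSyl}: any prime factor of $(n,\psi(p_i^{a_i}))$ must by (1) divide $p_i - 1$, hence is strictly smaller than $p_i$, hence does not appear in $p_i^{a_i}\cdots p_r^{a_r}$. So under (1), every group of order $n$ carries an ordered Sylow tower for free, and supersolvability is then a question of whether that tower can be refined into a normal series with cyclic prime-order quotients.

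For necessity, I would take each failure of the conditions in turn and exhibit a non-supersolvable group of order $n$, producing it as a direct factor of a cyclic group of the complementary order. If condition (1) fails at index $i$, there is a prime $q \mid n$ with $q \mid p_i^k - 1$ for some smallest $k > 1$, and $q \nmid p_i - 1$; I would then invoke a Rédei skew product from Section \ref{sec:redei} in the form of a semidirect product $C_{p_i}^{\,k} \rtimes C_q$, where $C_q$ acts through a generator of order $q$ inside the Galois group of $\mathbb{F}_{p_i^k}/\mathbb{F}_{p_i}$, so that $C_{p_i}^{\,k}$ is an irreducible $\mathbb{F}_{p_i}[C_q]$-module of dimension $k > 1$; such a module admits no $G$-invariant subgroup of prime order, so the group cannot be supersolvable by the refinement argument of Lemma \ref{lem:supord}. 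For failures of (2a) I would build a three-prime cascade: using $p_i \mid p_j - 1$ and $p_j \mid p_k - 1$, construct a nontrivial extension in which an elementary abelian $p_k$-group of rank $p_i$ (which fits in $n$ precisely because $a_k \geq p_i$) carries an action of an extraspecial-type group of order $p_j p_i$, producing again a chief factor of $p_k$-power order larger than $p_k$. For (2b), the construction is similar but now the obstruction is that $C_{p_i}$ or $C_{p_i}^2$ acts on a suitable piece of $C_{p_k}^{a_k}$ through an automorphism of order $p_i$ or $p_i^2$ (available precisely because $p_i^{a_i} - 1$ divides $p_k^{a_k} - 1$ in the forbidden way), again yielding a non-cyclic irreducible chief factor.

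For sufficiency, let $G$ be a counterexample of minimum order $n$. Every proper subgroup and quotient of $G$ still has order satisfying (1) and (2), so by minimality they are supersolvable. By the preliminary remark, $G$ has an ordered Sylow tower, in particular a normal Sylow $p_r$-subgroup $P_r$, and $G/P_r$ is supersolvable. It then suffices to show that $P_r$ admits a $G$-invariant chain with cyclic prime-order quotients; the same argument applied inductively inside the tower finishes the proof. Writing $D$ for the Frattini subgroup of $P_r$, the Burnside basis theorem (Theorem \ref{thm:basis}) identifies $P_r/D$ with an elementary abelian $\mathbb{F}_{p_r}$-space on which $G/P_r$ acts through a quotient $\bar{G}$, and Hall's automorphism bound (Theorem \ref{thm:auto}) shows that the order of this action divides $p_r^{*}\,\theta(p_r^{a_r})$ for an exponent controlled by $a_r$; condition (1) then restricts the eigenvalues of each element of $\bar{G}$ on $P_r/D$ to lie in $\mathbb{F}_{p_r}$, so the action is triangularizable and hence preserves a full flag of subspaces, giving a $G$-invariant chain of prime-order quotients down to $D$. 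Lifting by induction on $|P_r|$ handles $D$ and iterating down the Sylow tower completes the contradiction.

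The main obstacle will be the sufficiency analysis in the case where some $a_k$ is larger than a prime $p_i$ also dividing $n$, which is precisely when condition (2) activates. In that regime the naive eigenvalue argument above is not enough, because an element of $G/P_k$ of $p_i$-power order can in principle act on $P_k$ via a non-diagonalizable automorphism whose induced action on $P_k/D$ is still trivial yet whose lift to $P_k$ itself has non-cyclic invariant subgroups. Ruling this out is exactly what conditions (2a) and (2b) do: (2a) prevents a two-step composition $C_{p_i} \to C_{p_j} \to \mathrm{Aut}(P_k)$ from producing an irreducible action of $C_{p_j}$ on a $p_k$-section, while (2b) caps the $p_i$-part of such an action by forbidding $p_i^3$ or, when $a_i = 2$, by ensuring the entire $C_{p_i^2}$ acts by scalars through the cyclic subgroup of order $p_i^2$ inside $\mathbb{F}_{p_k}^{\times}$. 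Turning these numerical conditions into the required module-theoretic triangularization on every $G$-chief factor of $P_k$ is the delicate computation that drives the argument.
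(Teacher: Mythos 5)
Your necessity direction follows the paper's strategy (an explicit non-supersolvable group for each failure mode, padded by a cyclic direct factor of complementary order), though note one slip: for a failure of (1) the group $C_q$ must act on $C_{p_i}^{\,k} \cong \mathbb{F}_{p_i^k}^{+}$ through an element of order $q$ in the \emph{multiplicative} group $\mathbb{F}_{p_i^k}^{\times}$ (which exists because $q \mid p_i^k - 1$), not through the Galois group of $\mathbb{F}_{p_i^k}/\mathbb{F}_{p_i}$, which is cyclic of order $k$ and in general contains no element of order $q$. Your sketches for (2a) and (2b) are plausible in outline, but the substance of that direction is the explicit presentations and the verification that the resulting groups have no normal subgroup of order $q$ (hence no ordered Sylow tower, hence are not supersolvable); none of that is supplied.

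The sufficiency direction has a genuine gap. The assertion that ``condition (1) restricts the eigenvalues of each element of $\bar{G}$ on $P_r/D$ to lie in $\mathbb{F}_{p_r}$, so the action is triangularizable'' is unjustified and false as stated: an element of order $p_i^{b}$ has eigenvalues that are $p_i^{b}$-th roots of unity, and these lie in $\mathbb{F}_{p_r}$ only if $p_i^{b} \mid p_r - 1$, whereas condition (1) controls only the \emph{primes} dividing $(n,\psi(p_r^{a_r}))$ and so yields only $p_i \mid p_r - 1$. (The order $3^2\cdot 7^3$ illustrates the danger: $C_9$ acts faithfully and irreducibly on $\mathbb{F}_7^3$, and it is condition (2b), not (1), that excludes this order.) You acknowledge exactly this in your closing paragraph and then defer ``the delicate computation that drives the argument''---but that computation is the theorem. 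The paper's route is structurally different: it inducts by producing a normal subgroup of order exactly $p_r$, first reducing to the case where $P_r$ is a \emph{minimal} normal (hence elementary abelian) subgroup on which $B = P_1\cdots P_{r-1}$ acts irreducibly, then reducing to the case where this representation is faithful, then using (2a) and (2b) to show that each $P_i$ with $p_i \le a_r$ acts by scalars, then invoking the monomial-representation lemma (Lemma \ref{lem:monom}) for the supersolvable group $B$ to conclude that $B$ is diagonal, hence abelian, hence cyclic, and finally that $a_r = 1$. None of this machinery appears in your outline, and your alternative reduction through the Frattini quotient $P_r/D$ followed by ``lifting by induction'' does not obviously produce a $G$-invariant chain in $P_r$ itself with cyclic prime-order quotients.
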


The set of conditions will collectively be referred to as (*) for the remainder of this
section.  Unlike in previous sections, this set of conditions is rather disjointed and 
not easy to make sense of.

Let us determine how a positive integer $n$ would violate (*).  We first consider condition (1).  
By definition $(p_i-1) | \psi(p_i^{a_i})$, so every prime factor of $(n, p_i-1)$ is also a prime factor of
$(n,\psi(p_i^{a_i}))$.  To violate (1), there must be some prime $p_j | n$ such that $p_j | \psi(p_i^{a_i})$
but $p_j \nmid p_i - 1$.  Using the definition of $\psi$, we may conclude that $p_j | p_i^k - 1$ for some
$2 \leq k \leq a_i$.  Thus, $n$ violates condition (1) if and only if it has a factor of the form $pq^v, v \geq 2$, where 
$p | q^v - 1$ and $p \nmid q^i-1$ for $i < v$.

We consider condition (2).  For this condition to be relevant, there must be a prime $p$ dividing $n$ such that
$q^p$ also divides $n$ for some prime $q$.  Obviously, condition (2)(a) is violated if and only if
there exists a prime $r$ dividing $n$ such that $p | r-1$ and $r | q-1$.  
Condition (2)(b) is violated if and only if $p^3 | n$, or if $p^2 | n$ but $p^2 \nmid q-1$.

In total, that are four kinds of factors $n$ could have that would violate (*): one by condition (1),
one by condition (2)(a), and two by condition (2)(b).  These will be summarized as part of the verification
of the validity of (*) in the following section.

\subsection{Verifying the Criteria for Supersolvable Numbers}   

\begin{theorem}

A positive integer $n$ is a supersolvable number if and only if it satisfies (*).

\end{theorem}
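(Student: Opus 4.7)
The proof splits into necessity and sufficiency. For necessity I construct, for each way (*) can fail, an explicit non-supersolvable group whose order divides $n$. For sufficiency I take a minimal counterexample and combine the ordered Sylow tower result (Theorem \ref{thm:ordSyl}) with a careful analysis of minimal normal subgroups.

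\textbf{Necessity.} As isolated in the preceding discussion, (*) fails precisely when $n$ admits one of four sub-factors: (i) $pq^v$ with $v\geq 2$, $p\mid q^v-1$ and $p\nmid q^i-1$ for $i<v$; (ii) a factor involving distinct primes $p,q,r$ with $q^p\mid n$, $p\mid r-1$ and $r\mid q-1$; (iii) $p^3 q^p$; or (iv) $p^2 q^p$ with $p^2\nmid q-1$. For (i) the R\'edei skew product $R(p,q,1)$ of Section \ref{sec:redei} works: its Sylow $q$-subgroup $C_q^v$ is a minimal normal subgroup of non-prime order, so $R(p,q,1)$ is not supersolvable. Equivalently one may take $C_q^v\rtimes C_p$, with $C_p$ acting by multiplication by a primitive $p$th root of unity inside $\mathbb{F}_{q^v}^\times\subseteq \mathrm{Aut}(C_q^v)$; minimality of $v$ guarantees irreducibility over $\mathbb{F}_q$. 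Cases (ii)--(iv) are handled by analogous semidirect products featuring an elementary abelian chief factor of rank $\geq 2$: (iii) and (iv) use $C_p^{\,p}\rtimes C_q$ (with $C_q$ cyclically permuting the $p$ copies) or close variants, and (ii) uses a three-step iterated extension $C_q\rtimes C_r$ with a $C_p$ permuting $p$ disjoint copies. In each case, a direct product with a cyclic group of complementary order produces a non-supersolvable group of order exactly $n$.

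\textbf{Sufficiency.} Suppose (*) holds and let $G$ be a group of order $n$ that is not supersolvable with $n$ minimal. Condition (*) descends to divisors of $n$, so every proper subgroup and every proper quotient of $G$ is supersolvable. Condition (1) implies the ordered Sylow tower hypothesis $(p_i^{a_i}\cdots p_r^{a_r},\psi(p_i^{a_i}))=1$ of Theorem \ref{thm:ordSyl} (any prime dividing the left-hand side would, by (1), divide some $p_i-1$ and yet be some $p_j$ with $j\geq i$, forcing $p_j<p_i$), so $G$ has an ordered Sylow tower. Now let $N$ be a minimal normal subgroup of $G$; by Lemma \ref{lem:elem}, $N\cong \mathbb{F}_p^k$ for some prime $p\mid n$. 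If $k=1$ then a cyclic normal series of $G/N$ lifts to one of $G$, contradiction, so $k\geq 2$. Conjugation embeds $G/C_G(N)$ into $GL_k(\mathbb{F}_p)$, and any prime $q\neq p$ dividing $|G/C_G(N)|$ divides $\psi(p^k)$ and $n$, hence divides $p-1$ by (1). Standard $p$-solvable minimality arguments reduce to $G/C_G(N)$ being a $p'$-group (a nontrivial $p$-element acting on $N$ has a nonzero fixed subspace, and a minimality/normalizer argument then produces a smaller $G$-invariant subspace). If $G/C_G(N)$ were abelian, the fact that every prime divisor of its order divides $p-1$ would let $\mathbb{F}_p$ diagonalize the action, splitting $N$ into $G$-invariant lines and contradicting $k\geq 2$. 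So $G/C_G(N)$ is a non-abelian $p'$-group all of whose prime divisors divide $p-1$; condition (2) is then invoked to show that such a configuration forces $n$ to contain a sub-factor of one of the forbidden forms (ii)--(iv), contradicting (*).

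\textbf{Main obstacle.} The hard step is the final one of the sufficiency argument: ruling out a non-abelian $p'$-subgroup of $GL_k(\mathbb{F}_p)$ acting irreducibly on $\mathbb{F}_p^k$ with $k\geq 2$ and with all its prime divisors dividing $p-1$. Such representations genuinely exist in isolation (e.g.\ monomial representations built from cyclic permutations), and it is precisely these that condition (2) is engineered to forbid. Matching the four failure modes of (*) to the four obstruction patterns (irreducible dimension from (i), three-step chain of primes from (ii), large Sylow rank from (iii), and intermediate Sylow rank with incompatible congruence from (iv)) is the combinatorial heart of the proof and is where the full strength of both (1) and (2) is needed.
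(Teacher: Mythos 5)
Your overall architecture matches the paper's: explicit counterexamples for each failure mode of (*) in one direction, and a minimal-counterexample argument combining the ordered Sylow tower with the representation of a complement on a minimal normal elementary abelian subgroup in the other. However, both halves have genuine gaps.

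\textbf{Necessity.} Your treatment of case (i) via $R(p,q,1)$ is sound (and arguably cleaner than the paper's, which instead rules out normal subgroups of order $p$ and of order $q$ directly): minimality of $v$ makes $C_q^v$ an irreducible, hence minimal normal, subgroup of non-prime order. But your constructions for (ii)--(iv) are garbled. For $f_3=p^2q^p$ and $f_4=p^3q^p$ the elementary abelian chief factor must be $C_q^{\,p}$ acted on by a $p$-group of order $p^2$ or $p^3$; your ``$C_p^{\,p}\rtimes C_q$ with $C_q$ cyclically permuting the $p$ copies'' has the wrong order and the roles of $p$ and $q$ reversed (and a $q$-element cannot cyclically permute $p$ objects). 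For (ii) your ``$C_p$ permuting $p$ disjoint copies of $C_q\rtimes C_r$'' yields order $(qr)^pp$, not $pp'q^p$. More importantly, the real work in this direction is verifying that these groups have \emph{no} normal subgroup of order $q$ (equivalently, that the rank-$p$ module is irreducible under the relevant twisted-permutation action), and this is exactly where the hypotheses $p^2\nmid q-1$, $O(\rho\bmod q)=p^2$, etc.\ enter; the paper does this by an explicit computation with generators $b_1^{x_1}\cdots b_p^{x_p}$, and your proposal does not engage with it.

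\textbf{Sufficiency.} Two steps do not go through as written. First, your reduction to $G/C_G(N)$ being a $p'$-group: the fixed subspace of a single non-normal $p$-element is not $G$-invariant, so your parenthetical argument only yields $O_p(G/C_G(N))=1$, which is weaker (e.g.\ $SL_2(\mathbb{F}_3)$ acts irreducibly on $\mathbb{F}_3^2$). The paper sidesteps this by working with $N=P_r$ for the \emph{largest} prime $p_r$, where the Sylow-tower complement $B=P_1\cdots P_{r-1}$ makes the acting group automatically a $p_r'$-group, and by handling the case where $P_r$ is not minimal normal through a separate inductive step inside $BM$ with $M\subseteq Z(P_r)$. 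Second, and decisively, the step you label the ``combinatorial heart''---ruling out a non-abelian $p'$-group acting faithfully and irreducibly in dimension $k\geq 2$ using condition (2)---is precisely the content of the proof and is not carried out. The paper's mechanism is concrete: Lemma \ref{lem:monom} makes the action of the supersolvable complement $B$ monomial; conditions (2)(a) and (2)(b) force every $P_i$ with $p_i\leq a_r$ to act by scalars, which kills the permutation part of the monomial representation; hence $B$ is diagonal, abelian, and (by faithfulness and irreducibility) cyclic, forcing $a_r=1$. Without an argument of this kind your proof does not close.
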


This proof will make use of representations of groups as matrices over a finite field.  
We will require the following lemma from \cite[p.365]{scott}:

\begin{lemma} \label{lem:monom}

If $G$ is a finite supersolvable group, then every representation of $G$ over an algebraically closed field with characteristic relatively prime to $|G|$ is equivalent
to a monomial representation (i.e. a representation such that every
element of $G$ is mapped to a matrix with exactly one nonzero entry in each row and column). 

\end{lemma}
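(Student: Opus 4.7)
The plan is to proceed by induction on $|G|$. First I would reduce to the case of irreducible representations: because the characteristic of the field is coprime to $|G|$, Maschke's theorem guarantees every representation decomposes as a direct sum of irreducibles, and a direct sum of monomial matrices (block-diagonally arranged) is again monomial. So it suffices to prove that every irreducible representation $\rho$ of a finite supersolvable group $G$ over an algebraically closed field of suitable characteristic is induced from a one-dimensional representation of some subgroup. If $\dim \rho = 1$ there is nothing to do. Otherwise, by transitivity of induction, it is enough to write $\rho = \mathrm{Ind}_H^G \sigma$ for some \emph{proper} subgroup $H < G$ and some irreducible $\sigma$ of $H$; since subgroups of supersolvable groups are supersolvable and $|H|<|G|$, the induction hypothesis applies to $\sigma$.

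To produce the proper subgroup $H$, I would invoke Clifford's theorem. The key reduction is to find an abelian normal subgroup $A \triangleleft G$ such that $\rho|_A$ is \emph{not} a scalar multiple of a single character of $A$. Granted such an $A$, Clifford's theorem decomposes $\rho|_A$ into a sum of $G$-conjugate characters occurring with equal multiplicity, and because $\rho|_A$ is not isotypic, this $G$-orbit has size strictly greater than one. Pick one character $\chi$ in this orbit and let $I = \mathrm{Stab}_G(\chi)$ be its inertia subgroup; then $A \subseteq I \subsetneq G$. The full Clifford correspondence then yields an irreducible representation $\sigma$ of $I$ lying over $\chi$ with $\rho \cong \mathrm{Ind}_I^G \sigma$, which is exactly what is required.

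The main obstacle is establishing the existence of such an $A$ whenever $\dim \rho > 1$. I would argue contrapositively: suppose every abelian normal subgroup of $G$ acts by scalars in $\rho$. Replacing $G$ by $G/\ker \rho$ (still supersolvable, and with $\rho$ faithful on the quotient), the hypothesis becomes that every abelian normal subgroup of $G$ lies in $Z(G)$. Now let $A$ be a maximal abelian normal subgroup of $G$. In any solvable group one has the classical self-centralizing property $C_G(A) = A$, proved by noting that if $C_G(A)/A$ were nontrivial, then, being normal in the solvable quotient $G/A$, it would contain a nontrivial abelian subgroup normal in $G/A$; lifting this would enlarge $A$ and contradict maximality. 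Combining $A \subseteq Z(G)$ (so $C_G(A) = G$) with $C_G(A) = A$ forces $G = A$, so $G$ is abelian, and every irreducible representation of $G$ has dimension one, contradicting $\dim \rho > 1$.

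In summary, the inductive step rests on three ingredients: Maschke plus transitivity of induction to reduce to the irreducible case, Clifford's theorem to exhibit $\rho$ as induced from the inertia subgroup, and the structural fact about supersolvable (more generally, solvable) groups that a faithful non-linear irreducible representation cannot have all of its abelian normal subgroups acting by scalars. The first two ingredients are entirely formal, so the only genuinely delicate point is the last — and there the work is exactly the self-centralizing lemma for maximal abelian normal subgroups in solvable groups, together with the observation that supersolvability is inherited by subgroups and quotients so that the induction hypothesis applies to the inertia subgroup $I$.
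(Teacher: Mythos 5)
Your overall architecture (Maschke plus Clifford plus induction on $|G|$, inducing from the inertia subgroup) is the same as the paper's, which carries out the Clifford step concretely by decomposing $V$ into the isotypic components of an abelian normal subgroup and inducing a monomial basis from the stabilizer of one component. But there is a genuine gap at exactly the step you single out as delicate. The claim that in a \emph{solvable} group a maximal abelian normal subgroup $A$ satisfies $C_G(A)=A$ is false, and your sketched proof of it breaks at the lifting step: if $B/A$ is a nontrivial abelian normal subgroup of $G/A$ contained in $C_G(A)/A$, then the lift $B$ satisfies $A\subseteq Z(B)$ and $B/A$ abelian, so $B$ is nilpotent of class at most $2$, but it need not be abelian; hence the maximality of $A$ as an \emph{abelian} normal subgroup is not contradicted. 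A concrete counterexample is $G=SL(2,3)$: it is solvable, its unique maximal abelian normal subgroup is the center $Z\cong C_2$, and $C_G(Z)=G$. The same group shows that your parenthetical ``(more generally, solvable)'' cannot be right: $SL(2,3)$ has a two-dimensional irreducible complex representation that is not monomial (monomiality in dimension $2$ would require a subgroup of index $2$, which $SL(2,3)$ lacks), so any argument valid for all solvable groups proves too much.

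The repair must use supersolvability exactly where the paper does. After passing to $G/\ker\rho$ and assuming every abelian normal subgroup acts by scalars (hence is central), a maximal abelian normal subgroup coincides with $Z(G)$; since $G$ is nonabelian, $G/Z(G)$ is a nontrivial supersolvable group, so the bottom term of a normal series with cyclic quotients of prime order yields $H\triangleleft G$ with $Z(G)\subsetneq H$ and $H/Z(G)$ cyclic of prime order. Then $Z(G)\subseteq Z(H)$ and $H/Z(H)$ is cyclic, so $H$ is abelian, contradicting maximality. The essential point is that a minimal normal subgroup of a supersolvable group has prime order, so the lifted subgroup is generated by $Z(G)$ together with a single element centralizing it and is therefore abelian; for general solvable groups the minimal normal subgroup is only elementary abelian, and the lift can fail to be abelian, as in $SL(2,3)$ where the lift of $V_4\le A_4\cong G/Z$ is the nonabelian $Q_8$. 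With this supersolvable-specific step restored, the rest of your Clifford-theoretic argument goes through and coincides in substance with the paper's proof.
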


\begin{proof}

We proceed by induction on the order of $G$ (note that the base case is trivial).  Since subgroups and
quotient groups of supersolvable groups are supersolvable, our inductive hypothesis will imply that
every representation of a subgroup or quotient group of $G$ is equivalent to a monomial representation.

It suffices to show the lemma is true for irreducible representations.  Let $T$ be an irreducible representation
of $G$ acting on a vector space $V$.  If $T$ is not faithful (aka one-to-one as a mapping), then
the representation of $T$ over $G$ is isomorphic to a representation of $T$ over $G/Ker(T)$, which is monomial
by the inductive hypothesis.  We may therefore assume $T$ is faithful.  Furthermore, if $G$  abelian, then
any irreducible representation of $G$ is monomial, so we may assume that $G$ is not abelian.

Suppose $Z(G)$ were a maximal abelian normal subgroup of $G$.  By assumption $G/Z(G)$ is supersolvable, so
has a normal series with prime steps.  Since $G$ is not abelian, $G/Z(G)$ is not cyclic, so there is a proper subgroup
$H$ of $G$ such that the normal series for $G/Z(G)$ terminates 
with the relation $\{e\} \triangleleft H/Z(G)$, where $H \triangleleft G$, and
$H/Z(G)$ must have prime order.  But then certainly $H$ is abelian, which contradicts the maximality of $Z(G)$
as an abelian normal subgroup of $G$.     

It follows that there exists an abelian normal subgroup $H$ of $G$ such that $Z(G) \subsetneq H$.  
We decompose the space $V$ into $V_1 \oplus V_2 \oplus \dots \oplus V_r$ such that $H$ 
acts as a group of scalars on each $V_i$, and
each $V_i$ is maximal with this property.   If $r = 1$, then $H$ acts as a group of scalars on all of $V$, so that
$H \subseteq Z(G)$, a contradiction.  Therefore we may assume that $r \geq 2$.  

Let $g \in G$, $h \in H$, and $v \in V_i$ for some $i$.  Then letting $h' = g^{-1}hg$ by $H \triangleleft G$,
we have $h(gv) = gg^{-1}h(gv) = g(g^{-1}hg)v = g(h')v = g(cv)$, where $c$ is a scalar produced by the action
of $h'$.  Note that $c$ depends on our initial choices of $g$ and $h$, but not on the choice of $v \in V_i$.
It follows that $gV_i \subset V_j$ for some $j$.  Since by construction $V = \oplus_{i=1}^r V_i$ and
$gV = V$, it must be that $G$ induces a permutation of the $V_i$.  Since $V$ is assumed to be irreducible, this
permutation must be a transitive action of $G$ on the $V_i$.

Let $Q$ be the subgroup of $G$ fixing $V_1$.  It must be that $|G : Q| = r$, so $G$ is equivalent to the disjoint union
of cosets of the form $Qu_i$, where $V_1u_i = V_i$.  By assumption $r > 1$ so that $Q$ is a proper subgroup of $G$,
so the representation of $Q$ over $V_1$ is monomial.  We let $B_1$ denote a basis of $V_1$ over which 
$Q$ is represented by monomial matrices.  Certainly the disjoint union of the sets
$B_1u_i$, $1 \leq i \leq r$ forms a basis for $V$.  Now, if $b \in B_1$ and $g \in G$,
$g(bu_i) = (gu_i)b = (qu_j)b = u_j(qb) = u_j(cb') = c(b'u_j)$, where $c$ is a scalar and $b' \in B_1$.  
Thus, $B$ is a monomial basis for $G$ over $V$.

\end{proof}

Now for the verification of supersolvable numbers (recall that $O(q \text{ mod } p) = v$ means that
$q^v \cong 1 \text{ mod } p$ but $q^i \not\cong 1 \text{ mod } p$ for $0 < i < v$):

\begin{proof}

We prove by induction that every $n$ satisfying (*) is a supersolvable number by showing that all groups of order $n$ have a 
normal subgroup whose order is equal to $p_r$, the largest prime dividing $n$ (recall that as a result of
\ref{lem:supord} the converse is true, that supersolvable groups have such a normal subgroup).  The base case is trivial.
Suppose that for a given $n$, all numbers less than $n$ satisfying 
(*) are supersolvable numbers with a normal subgroup of order their largest prime factor, and let $G$ be an
arbitrary group of order $n$.  If $n$ satisfies (*), all of its divisors must also satisfy (*), so all proper
subgroups and quotient groups of $G$ are supersolvable and have a normal subgroup of largest prime order.  In the
case that $n = p^a$ is a prime power, $n$ clearly satisfies (*), and since $G$ is a $p$-group, it must be finite nilpotent,
and therefore supersolvable.  Furthermore, since $p$-groups have nontrivial center, 
there exists a subgroup of order $p$ contained
in $Z(G)$, and this subgroup must then be normal in $G$.

Otherwise, let $n = p_1^{a_1} \dots p_r^{a_r}$, where $r \geq 2$ and $p_1 < p_2 < \dots < p_r$.  It suffices 
to show that $G$ has a normal subgroup $N$ of order $p_r$.  If it does, then by the inductive hypothesis
$G/N$ is supersolvable, so has a normal series 
$\{e\} = N/N \triangleleft R_1/N \triangleleft \dots \triangleleft G/N$ with consecutive quotients cyclic.  
This corresponds to a partial normal series $N \triangleleft R_1 \triangleleft \dots \triangleleft G$ for $G$, where
consecutive quotients are cyclic.  But since $N$ is of prime order, this trivially extends to a complete normal series
for $G$ with consecutive quotients cyclic, so then $G$ is supersolvable.  

We claim that $(p_i^{a_i} \dots p_r^{a_r},\psi(p_i^{a_i})) = 1$ for all $1 \leq i \leq r$.  If not, then 
there exists $i < j$ such that $p_j | \psi(p_i^{a_i})$.  By (1) in (*), it must also be that $p_j | p_i - 1$, but
this is absurd, since $p_j > p_i$.  This implies that $G$ has an ordered Sylow tower, so it contains a system of 
Sylow subgroups $\{P_1,\dots,P_r\}$ such that $P_i \triangleleft P_1 \dots P_i$, and in particular 
$P_r \triangleleft G$.  Let $B = P_1 \dots P_{r-1}$.  Then clearly $B \cap P_r = \{e\}$, so
$G = BP_r$.

Suppose that $P_r$ is not a minimal normal subgroup of $G$, that is, there exists $M \triangleleft G$
such that $\{e\} \subsetneq M \subsetneq P_r$.  We prove that $M \cap Z(P_r)$ is nontrivial.  
Since $M \triangleleft P_r$ necessarily, $M$ is a union of conjugacy classes of $P_r$.  
All of these conjugacy classes have $p_r$-power order except for those 
consisting of single elements in $Z(P_r)$.  If the only element of $M$ from $Z(P_r)$ were
$\{e\}$, we would have $|M| \cong 1 \text{ mod } p_r$.  Since the only subgroup of $P_r$ not of 
$p_r$-power order is $\{e\}$, we would have $M = \{e\}$, but this contradicts the definition of $M$.  It follows
that $|M \cap Z(P_r)| > 1$.  Furthermore, $Z(P_r)$ is characteristic in $P_r$ and $P_r \triangleleft G$, so
$Z(P_r) \triangleleft G$.  Since also $M \triangleleft G$, we must have that $M \cap Z(P_r)$ is a nontrivial
normal subgroup of $G$ that is certainly a proper subgroup of $P_r$.  
Altogether this means that from now on we may assume that $M \subseteq Z(P_r)$; 
if not, we may replace $M$ by $M \cap Z(P_r)$, which
is also normal in $G$ and a nontrivial proper subgroup of $P_r$, but is contained in $Z(P_r)$. 

Now, $BM$ is a proper subgroup of $G$, so since $p_r$ divides $|BM|$, we may apply the inductive hypothesis
to conclude that $BM$ has a normal subgroup $N$ of order $p_r$, necessarily contained in $M$.  
Since $N \subseteq M \subseteq Z(P_r)$, clearly $N \triangleleft P_r$.  Since also $N \triangleleft B$, it 
must be that $N \triangleleft BP_r = G$, and we have found a normal subgroup in $G$ of order $p_r$.

From now on we may assume that $P_r$ is a minimal normal subgroup of $G$.  This means that $P_r$ is elementary
abelian by Lemma \ref{lem:elem}, and thus isomorphic to $C_{p_r}^{a_r}$, so we may think of its elements
as vectors in $\mathbb{F}_{p_r}^{a_r}$.  Since $P_r \triangleleft G$, conjugating the elements
of $P_r$ by an element $b \in B$ induces an automorphism on $P_r \cong \mathbb{F}_{p_r}^{a_r}$, which may be described 
as matrix multiplication by associating to each $b \in B$ a matrix  $M_b \in GL_{a_r}(\mathbb{F}_{p_r})$.  Given this 
representation, for $b \in B$ and $v \in P_r$, we associate $bvb^{-1}$ with $M_bv$.  We will notate
this representation as $\Delta: B \rightarrow GL_{a_r}(\mathbb{F}_{p_r})$. It is 
irreducible, in the sense that no proper subgroup of $P_r$ other than $\{e\}$ forms a vector subspace that is
closed under the action induced by elements of $B$.  
If there were such a subgroup $M$, then $M$ closed under conjugation
by $B$ implies $M \triangleleft B$, and since $P_r$ is abelian we have $M \triangleleft P_r$, but then
$M \triangleleft BP_r = G$, contradicting that $P_r$ is minimal normal in $G$.

Suppose that this representation of $B$ as matrices is not one-to-one (aka faithful).  Then in terms of the
group structure, there is some nontrivial element of $B$ that lies in the kernel of the conjugation action
of $B$ on $P_r$.  Let $Z(G, P_r)$ denote the centralizer of $P_r$ in $G$.  Then this kernel is $B \cap Z(G, P_r)$, and it must be nontrivial.  It is normal in $B$ as the kernel of 
a homomorphism, and it is clearly normalized by $P_r$, so it must be normal in $BP_r = G$.  Then we may apply our
inductive hypothesis to $G/(B \cap Z(G, P_r))$ to find a normal subgroup $R/(B \cap Z(G, P_r))$ of order
$p_r$.  This corresponds to a normal subgroup $R \triangleleft G$ whose order is divisible by $p_r$, but is not
divisible by $p_r^2$, since $|B \cap Z(G, P_r)|$ is relatively prime to $p_r$.  Then 
$R \cap P_r$ is normal in $G$ since $R$ and $P_r$ are, and it must have order exactly $p_r$.

We may now assume that our representation $\Delta$ of $B$ is one-to-one (or faithful).  Then 
$P_i, 1 \leq i \leq r-1$ is a subgroup of $B$ represented faithfully as a subgroup of 
$GL_{a_r}(\mathbb{F}_{p_r})$, so that $|P_i|$ divides $|GL_{a_r}(\mathbb{F}_{p_r})|$, or
$p_i^{a_i} | \psi(p_r^{a_r})$.  Because of our condition (*), 
we must have that since $p_i | (n,\psi(p_r^{a_r}))$, we also have $p_i | p_r - 1$ for
$1 \leq i \leq r-1$.

We now show that if there exists $p_i \leq a_r ~ (i \neq r)$, then the elements of $P_i$ are mapped to multiples
of the identity matrix by $\Delta$.  First, note that by (*), we must have $a_i \leq 2$, so that 
$P_i$ is abelian.  Also, we claim that elements of $P_i$ commute with elements of $P_j$ for 
$i \neq j$, $1 \leq i,j \leq r-1$.  If not, then $P_iP_j$ cannot be a nilpotent group, so by the
criterion for nilpotent numbers we conclude that $(p_i^{a_i}p_j^{a_j}, \psi(p_i^{a_i}p_j^{a_j})) > 1$, so is 
divisible by either $p_i$ or $p_j$.  However, this subgroup satisfies (*), so by condition (1) it must be that
either $p_i | p_j - 1$ or $p_j | p_i - 1$.  Certainly we cannot have $p_i | p_j - 1$, because also
$p_j | p_r - 1$, contradicting (2)(a) of (*).  However, if $p_j | p_i - 1$, then certainly
$p_j < p_i \leq a_r$, but then since $p_i | p_r - 1$ we have again contradicted (2)(a).  This proves that
the elements of $P_i$ must commute with all elements in $B$ (since $P_i$ is also abelian).  Also, 
$p_i | p_r - 1$, and by (*) condition (2)(b) $a_i \leq 2$ and if $a_i = 2$ then $p_i^2 | p_r - 1$, so in any case
$p_i^{a_i} | p_r - 1$, so the $p_i^{a_i}$th roots of unity are contained in $\mathbb{F}_{p_r}$.  Thus, we may decompose
the representation of $P_i$ into irreducible subrepresentations, but these each have dimension 1 since $P_i$ is abelian,
and we may conclude that $\Delta$ maps the elements of $P_i$ to scalar matrices.

Now, since $B$ is supersolvable, we may apply Lemma \ref{lem:monom} to conclude that 
$\Delta$ is equivalent to a monomial representation $\Delta'$ over a field extension of $\mathbb{F}_{p_r}$.  
In $\Delta'$, we look at the set of diagonal matrices $D$ as a subgroup of $B$. 
It is clear that
$D$ is abelian, and it is easy to check that $D \triangleleft B$.  Since $B$ is monomial, all of its elements
are matrices where the rows are some permutation of a diagonal matrix, so the group $B/D$ is equivalent to
a subgroup of the permutation group on $a_r$ elements.  We look at the orbit of an arbitrary element of 
$B/D$, and let $d$ denote the size of the orbit.  Certainly $d \leq a_r$ and $d$ divides $|B : D|$.  
Now, any prime dividing $|B : D|$ must be one of the $p_i$.  But if $p_i \leq a_r$, we already know that
the elements of $B$ contained in $P_i$ are mapped to the set of diagonal matrices $D$, so cannot contribute
to a nontrivial orbit in $B/D$.  Thus there are no $p_i$ that can divide $d$, so $d = 1$, and
$B = D$.  But this means that $B$ is finite abelian, so it is mapped to a direct sum of cyclic groups under 
$\Delta$.  Since $\Delta$ is irreducible, $B$ must be mapped to just one cyclic group, so since $\Delta$ is also
faithful, $B$ itself must be cyclic.  

Since now $B$ is a cyclic group acting irreducibly on $\mathbb{F}_{p_r}^{a_r}$, 
it must be that $a_r = 1$.  But then our
subgroup $P_r \triangleleft G$ actually has order $p_r$, and we are done with this direction.

Now suppose that $n$ does not satisfy (*).  Then as noted in the previous section, $n$ must have one of the following
factors (where $p$,$p'$,$q$ are primes):
$$f_1: pq^v, v \geq 2, v = O(q \text{ mod } p)$$
$$f_2: pp'q^p, p|p'-1, p'|q-1$$
$$f_3: p^2q^p, p^2 \nmid q-1$$
$$f_4: p^3q^p$$
For factors $f_2,f_3,f_4$, note that 
by Fermat's Little Theorem certainly $p | q^{p-1}-1$, 
so in particular $p | \psi(q^p)$, which by condition (1) of (*) implies
that $p | q-1$.  Thus, we may add the assumption that $p | q-1$ to $f_2,f_3,$ and $f_4$.
  
For each of these possible $f_i$, we explicitly construct a nonsupersolvable group $F_i$ of order $f_i$.  Then we may
construct the group $F_i \times C_{n/f_i}$ that contains a nonsupersolvable subgroup, so cannot itself
be supersolvable.

Case 1: We construct the group $R(p,q,1)$ defined in ~\ref{sec:redei}.  If this group is 
supersolvable, there exists a normal series with consecutive quotients of prime order, and in
particular the series would need to terminate with a normal subgroup of $R(p,q,1)$ of prime order.  
We thus may prove that $R(p,q,1)$ is not supersolvable by demonstrating that it
has neither a normal subgroup of order $p$ nor a normal subgroup of order $q$.

Let $G = \{e,\alpha,\alpha^2, \dots, \alpha^{p-1}\}$, and let $r$ represent a generic element of $R$, 
the finite field of $q^v$ elements.  Let $h: G \rightarrow R^{\times}$ be the skew
homomorphism.  Note that this group $RG$ has order $pq^v$, 
so any group of order $p$ is actually a Sylow $p$-subgroup.  We can construct at least one subgroup
of order $p$, namely the subgroup generated by $(0,\alpha)$.  We try conjugating a general element:
$$(a,\alpha^j)(0,\alpha^i)(-h(\alpha^{-j})a, \alpha^{-j}) = (a,\alpha^{i+j})(-h(\alpha^{-j})a, \alpha^{-j}) = 
(a-h(\alpha^{i})a, \alpha^{i}).$$
This lies back in the original subgroup only if $h(\alpha^i) = 1$.  However, $h$ is not the trivial homomorphism
by construction, so this conjugation action cannot preserve the subgroup generated by $(0,\alpha)$.  There cannot
be a different normal subgroup of order $p$, because then as a normal Sylow $p$-subgroup it would be unique in
$RG$, which it obviously isn't.  Thus $RG$ has no normal subgroup of order $p$.

Now, it is easy to see that a subgroup of order $q$ cannot contain an element $(a,\alpha)$ 
with nontrivial second coordinate, since then $(a,\alpha)^q$ could not be the identity, as $\alpha^q \neq e$.  
Thus, any subgroup of order $q$ is a subgroup generated by an element of the form $(a,e)$.  Suppose such a subgroup were 
normal in $RG$.  Then we conjugate: 
$$(r,\alpha)(a,e)(-h(\alpha^{-1})r, \alpha^{-1}) = (r+h(\alpha)a, \alpha)(-h(\alpha^{-1})r, \alpha^{-1})
= (h(\alpha)a,e).$$
Now, this subgroup already contains the $q$ elements \\ 
$(0,e),(a,e),(2a,e),\dots,((q-1)a,e)$, so it should not contain
any more.  Thus, $h(\alpha)a = ca$ for some $2 \leq c \leq q-1$ (note that $c \neq 0$ since $0$ is not a unit,
and $c \neq 1$ because $h$ is nontrivial).  
After repeated conjugation by $(r,\alpha)$, $p$ times in all, we may see that
$a = c^pa$, so $c^p \cong 1 \text{ mod } q$.  Now, by Fermat's Little Theorem,
$c^{q-1} \cong 1 \text{ mod } q$, so the order of $c$ divides $q-1$.  However, by assumption $p \nmid q-1$ (recall that
$v \geq 2$ is the smallest value such that $q^v \cong 1 \text{ mod } p$), so then the order of $c$ is not
$p$, and $c^p \not\cong 1 \text{ mod } q$, a contradiction.  It follows that $RG$ has no normal subgroups of order
$q$ either, so it cannot be supersolvable.  

Case 2: We define a group of order $pp'q^p$ by a presentation similar to those for R'edei's first-order non-abelian groups.
Let $\rho$ be a number such that $O(\rho \text{ mod } p') = p$, and let $\sigma$ be a number such that
$O(\sigma \text{ mod } q) = p'$ (note that these exist because $p | p' - 1$ and $p' | q-1$).  
Let $a,a',b_1,b_2,\dots,b_q$ be distinct elements of a group $H$ such that
$$a^p = a'^{p'} = b_1^q = b_2^q = \dots = b_p^q = e,\ b_ib_j = b_jb_i,$$
$$a^{-1}a'a = a'^{\rho},$$
$$a^{-1}b_ia = b_{i+1} (1 \leq i \leq p-1), a^{-1}b_pa = b_1,$$
$$a'^{-1}b_1a' = b_1^{\sigma}.$$

It is clear that $b_i = a^{1-i}b_1a^{i-1}, 1 \leq i \leq p$.  Furthermore,
$$a'^{-1}b_ia' = a'^{-1}a^{1-i}b_1a^{i-1}a' = $$
$$a^{1-i}a^{i-1}a'^{-1}a^{1-i}b_1a^{i-1}a'a^{1-i}a^{i-1} = 
a^{1-i}a'^{\rho^{i-1}}b_1a'^{\rho^{1-i}}a^{i-1} = $$
$$a^{1-i}b_1^{\sigma^{\rho^{1-i}}}a^{i-1} = b_i^{\sigma^{\rho^{1-i}}}.$$

Clearly $q$ is the largest prime factor of $|H|$, so if $H$ has no normal subgroup of order $q$, it has no 
ordered Sylow tower, and thus is not supersolvable.  Suppose the contrary, that there is some normal subgroup
of $H$ of order $q$.  Then it is generated by an element $b$ of order $q$, which must have the form
$b =  b_1^{x_1}\dots b_p^{x_p}$, where not all of the $x_i$ are $0 \text{ mod } q$.  Now, 
$$a^{-1}ba = a^{-1}b_1^{x_1}aa^{-1}b_2^{x_2}a \dots a^{-1}b_px^pa = 
b_1^{x_p}b_2^{x_1}b_3^{x_2}\dots b_p^{x_{p-1}},$$
and in general $a^{-i}ba^i = b_1^{x_{1-i}}b_2^{x_{2-i}} \dots b_p^{x_{p-i}}$, where the subscripts of the
$x$s are mod $p$.  By normality,
$a^{-1}ba = b^c$ for some $1 \leq c \leq q-1$, so that also $a^{-i}ba^i = b^{c^i}$ for all $i$.
Comparing the exponents of $b_p$ for $1 \leq i \leq p$, we see that $x_i \cong x_pc^{p-i}$.  
By assumption at least one $x_i$ is not $0 \text{ mod } q$, so then $x_p$ is not $0 \text{ mod } q$.  But if
$x_p$ is not $0 \text{ mod } q$, then the above equation implies that in fact all of the $x_i$ are nonzero mod $q$.

Now, 
$$a'^{-1}ba' = a'^{-1}b_1^{x_1}a'a'^{-1}b_2^{x_2}a'\dots a'^{-1}b_p^{x_p}a' =
b_1^{x_1\sigma}b_2^{x_2\sigma^{\rho^{-1}}} \dots b_p^{x_p\sigma^{\rho^{1-p}}}.$$
By normality, $a'^{-1}ba' = b^d$ for some $1 \leq d \leq q-1$.  We compare the exponents of $b_1$, and see that
$x_1\sigma = x_1d$, so necessarily $\sigma = d$.  Comparing the exponents of $b_2$, we see that 
$x_2\sigma^{\rho^{-1}} = x_2d = x_2\sigma$, so then
$\sigma^{\rho^{-1}} \cong \sigma \text{ mod } q$, or $\sigma^{\rho^{-1}-1} \cong 1 \text{ mod } q$.  By assumption
$\sigma^{p'} \cong 1 \text{ mod } q$, so $\rho^{-1} \cong 1 \text{ mod } p'$, but this contradicts our assumption that
$O(\rho \text{ mod } p') = p$.  Thus, there is no normal subgroup of $H$ of order $q$, and this group is not supersolvable.

Case 3: We define a group of order $p^2q^p$ with a presentation.  Let $\rho$ be a number such that 
$O(\rho \text{ mod } q) = p$, and let $a,b_1,b_2,\dots,b_p$ be distinct elements of a group $H$.  We define $H$ by the relations
$$a^{p^2} = b_1^q = b_2^q = \dots = b_p^q = e;\ b_ib_j = b_jb_i,$$
$$a^{-1}b_ia = b_{i+1}, 1 \leq i \leq p-1,$$
$$a^{-1}b_pa = b_1^{\rho}.$$

As before, $q$ is the largest prime divisor of $H$, so we wish to show that $H$ contains no normal subgroup of order
$q$.  Suppose to the contrary that we have such a group.  It would be generated by an element
$b = b_1^{x_1}\dots b_p^{x_p}$, where not all of the $x_i$ are $0 \text{ mod } q$.  Note that
$a^{-1}ba = a^{-1}b_1^{x_1}a\dots a^{-1}b_p^{x_p}a = b_1^{\rho x_p}b_2^{x_1} \dots b_p^{x_{p-1}}$.  We must have that
$a^{-1}ba = b^c$ for some $1 \leq c \leq q-1$, and also $a^{-i}ba^i = b^{c^i}$.  Comparing the exponents of $b_p$ for 
$1 \leq i \leq p-1$ we find that $x_i \cong x_pc^{p-i} \text{ mod } q$.  As before, this allows to conclude that $x_p$ is not $0 \text{ mod } q$.  For $i = p$, we have that $\rho x_p \cong x_pc^p \text{ mod } q$, so since $x_p$ is nonzero, 
$\rho \cong c^p$ mod $q$.  But then $1 \cong \rho^p \cong c^{p^2}$, so clearly $O(c \text{ mod } q) = p^2$, and so
$p^2 | q-1$, a contradiction.  It follows that this normal subgroup does not exist, and this group is not supersolvable.  

Case 4: We wish to construct a group of order $p^3q^p$ that is not supersolvable.  We may assume that $p^2 | q-1$, since otherwise we could instead fall back to Case 3.  Let $\rho$ be a number such that $O(\rho \text{ mod } q) = p^2$.  
Let $a_1,a_2,b_1,\dots,b_p$ be distinct elements of a group $H$.  We define $H$ by the relations
$$a_1^p = a_2^{p^2} = b_1^q = \dots = b_p^q = e;\ b_ib_j = b_jb_i,$$
$$a_1^{-1}a_2a_1 = a_2^{1+p},$$
$$a_1^{-1}b_ia_1 = b_{i+1},\ a_1^{-1}b_pa_1 = b_1,$$
$$a_2^{-1}b_1a_2 = b_1^{\rho}.$$

It is clear that $b_i = a_1^{1-i}b_1a_1^{i-1}$.  Furthermore, we have that
$$a_2^{-1}b_ia_2 = a_2^{-1}a_1^{1-i}b_1a_1^{i-1}a_2 = $$
$$a_1^{1-i}a_1^{i-1}a_2^{-1}a_1^{1-i}b_1a_1^{i-1}a_2a_1^{1-i}a_1^{i-1} = 
a_1^{1-i}a_2^{-(1+p)^{(1-i)}}b_1a_2^{(1+p)^{(1-i)}}a_1^{i-1} = $$
$$a_1^{1-i}b_1^{\rho^{(1+p)^{(1-i)}}}a_1^{i-1} = b_i^{\rho^{(1+p)^{(1-i)}}}.$$  

We can make one further simplification here.  The value $(1+p)^{(1-i)}$ occurred as an exponent of $a_2$, so we are only concerned with its value mod $p^2$.  But for any positive $i$, $(1+p)^{(i-1)}$ has a binomial expansion in which every term is divisible by $p^2$ except for $1+(i-1)p$.  Then $(1+p)^{(1-i)}$ is the multiplicative inverse of this, but is not hard to
check that this is $1+(1-i)p$, since $(1+(i-1)p)(1-(i-1)p) = 1-(i-1)^2p^2 \cong 1 \text{ mod } p^2$.  Thus, we may simplify the above relation to $a_2^{-1}b_ia_2 = b_i^{\rho^{1+(1-i)p}}$.

As in previous cases, $q$ is the largest prime factor of $|H|$, so it suffices to show that $H$ has no normal subgroup
of order $q$.  We suppose there exists such a normal subgroup.  It is generated by some element 
$b = b_1^{x_1}\dots b_p^{x_p}$, where not all of the $x_i$ are $0$ mod $q$.  Using the same argument as in case 2, we may
conclude that this in fact means that none of the $x_i$ are $0$ mod $q$.  We also have that
$a_2^{-1}ba_2 = b_1^{x_1\rho}b_2^{x_2\rho^{1-p}}\dots b_p^{x_p\rho^{1+p-p^2}}$.  By normality, it must be that
$a_2^{-1}ba_2 = b^c$ for some $1 \leq c \leq q-1$.  Comparing the exponents of $b_1$, we have that
$x_1\rho \cong x_1c $ mod $q$, so since $x_1$ is nonzero mod $q$, $\rho \cong c$ mod $q$.  Comparing the exponents of
$b_2$, we have that $x_2\rho^{1-p} \cong x_2c$ mod $q$, so 
$\rho^{1-p} \cong \rho$ mod $q$, and $\rho^p \cong 1$ mod $q$, but this contradicts that
$O(\rho \text{ mod } q) = p^2$.  Thus, the normal subgroup does not exist, and this group is not supersolvable.

To summarize, if $n$ is a positive integer that does not satisfy (*), then it must have one of the four kinds of factors
corresponding to the cases above.  For each factor, we may explicitly construct a group that is not supersolvable, and then any group containing it cannot be supersolvable.  In particular, this means that we can certainly construct a group of order $n$ that is not supersolvable, so $n$ is not a supersolvable number, and this concludes the proof.
  
\end{proof}

\section{Conclusion}

Characterizations of $P$ numbers for various properties $P$ of groups greatly help to construct a complete picture of 
the finite groups.  Characterizations similar to those here exist for solvable numbers, but these
proofs are fairly complex.  For more information, see \cite{pakianathan}, 
which gives many sources for these kinds of characterizations.

\clearpage
\pagestyle{empty}


\begin{thebibliography}{50}

\bibitem{dummit} Dummit, David S., and Foote, Richard M. \emph{Abstract algebra.} 2004.

\bibitem{gorenstein} Gorenstein, Daniel. \emph{Finite groups.} Vol. 301. American Mathematical Soc., 2007.

\bibitem{haines1} Haines, Thomas. \emph{Exposition of the theorem that groups of order $n$ must be cyclic iff $(n,\phi(n)) = 1$}. Private communication to the author. 

\bibitem{haines2} Haines, Thomas. \emph{Exposition of the theorem characterizing the numbers $n$ such that all groups of order $n$ are abelian}. Private communication to the author. 

\bibitem{hall} Hall, Marshall. \emph{The theory of groups.} Vol. 288. American Mathematical Soc., 1976.

\bibitem{herstein} Herstein, Israel Nathan. \emph{Abstract algebra.}
  Vol. 21990. New York: Macmillan, 1986.

\bibitem{pakianathan} Pakianathan, Jonathan, and Shankar, Krishnan. \emph{Nilpotent numbers.} American Mathematical Monthly (2000): 631-634.

\bibitem{pazderski} Pazderski, Gerhard. \emph{Die Ordnungen, zu denen nur
  Gruppen mit gegebener Eigenschaft geh\"oren.} Archiv der Mathematik
  10.1 (1959): 331-343.

\bibitem{robinson} Robinson, Derek. \emph{A Course in the Theory of Groups.} Vol. 80. Springer, 1996.

\bibitem{scott} Scott, W.R.  \emph{Group Theory.} Courier Corporation, 2012.

\bibitem{zassenhaus} Zassenhaus, Hans. \emph{The theory of groups.} Courier Dover Publications, 1999.

\end{thebibliography}
\end{document}